\newtheorem{Th}{Theorem}[section]
\newtheorem{Lem}[Th]{Lemma}
\newtheorem{Prop}[Th]{Proposition}
\newtheorem{Cor}[Th]{Corollary}
\theoremstyle{Def}
\newtheorem{Def}[Th]{Definition}
\newtheorem{Ex}[Th]{Example}
\theoremstyle{Rem}
\newtheorem{Rem}[Th]{Remark}
\def \c{\mathbb{C}}
\def \z{\mathbb{Z}}
\def \r{\mathbb{R}}
\def \n{\mathbb{N}}
\def \p{\mathbb{P}}
\def \ol{\overline}
\def \L{\mathcal{L}}
\def \K{{\bf K}_{G}(X)}
\def \B{{\bf B}}
\def \s{{\underline{w_0}}}
\def \.{\cdot}
\def \Reg{\textup{Reg}}
\def \ind{\textup{ind}}
\def \Vol{\textup{Vol}}
\def \Lie{\textup{Lie}}
\def \GL{\textup{GL}}
\def \Reg{\textup{Reg}}
\def \conv{\textup{conv}}
\def \ratmap{\dashrightarrow}
\def \GL{\textup{GL}}
\def \SO{\textup{SO}}
\def \SP{\textup{SP}}
\begin{document}

\title{Convex bodies associated to actions of reductive groups}

\author{Kiumars Kaveh}
\address{Department of Mathematics, University of Pittsburgh, Pittsburgh, PA 15260}
\email{kaveh@pitt.edu}

\author{Askold G. Khovanskii}
\address{Mathematics, University of Toronto, Toronto, Canada;
Moscow Independent University; Institute for Systems Analysis, Russian Academy of Sciences.}
\email{askold@math.utoronto.ca}

\subjclass[2000]{Primary 14L30, 53D20; Secondary 52A39}

\date{2011 and, in revised form, January 14, 2012.}

\dedicatory{To the memory of  Vladimir Igorevich Arnold}

\keywords{Reductive group action, multiplicity of a representation,
Duistermaat-Heckman measure, moment map,
graded $G$-algebra, $G$-line bundle, volume of a line bundle, semigroup of integral points, convex body, mixed 
volume, Brunn-Minkowski inequality.}

\begin{abstract}
We associate convex bodies to a wide class of graded $G$-algebras where $G$ is a connected reductive group. These convex bodies give information about the Hilbert function as well as multiplicities of irreducible representations appearing in the graded algebra. We extend the notion of Duistermaat-Heckman measure to graded $G$-algebras and prove a Fujita type approximation theorem and a Brunn-Minkowski inequality for this measure. This in particular applies to arbitrary $G$-line bundles giving an equivariant version of the theory of volumes of line bundles. We generalize the 
Brion-Kazarnowskii formula for the degree of a spherical variety to arbitrary $G$-varieties.
Our approach follows some of the previous works of A. Okounkov. We use the asymptotic theory of semigroups of integral points and Newton-Okounkov bodies developed in \cite{Askold-Kiumars-Newton-Okounkov}.
\end{abstract}

\maketitle




\section*{Introduction} \label{sec-intro}

Discovered in 1975, the famous Bernstein-Kushnirenko theorem gives an elegant formula for the number of solutions of a
generic system of $n$ equations in $(\c^*)^n$ with given Newton polytopes.
The discovery of this theorem was greatly inspired by the rich empirical results which Vladimir Igorevich Arnold had obtained in the course of his research on critical points of functions in several variables.  The Bernstein-Kushnirenko theorem was a starting point for the development of {Newton polytope theory}, which became one of the main subjects studied and developed at the Arnold Seminar. Soon it turned out that this theory is directly related to the theory of toric varieties.

In the early 1980s, the second author suggested the problem of generalizing the Bernstein-Kushnirenko theorem
replacing $(\c^*)^n$ with any reductive algebraic group over $\c$. Such a generalization was found by B. Kazarnovskii (\cite{Kazarnovskii}), { and M. Brion (\cite{Brion1})}.
Unlike the Bernstein-Kushnirenko theorem, Brion-Kazarnovskii's answer is not in terms of {volumes of convex bodies (i.e. a compact convex subset of the Euclidean space).}
In \cite{Okounkov-spherical}, A. Okounkov completes this missing part in the generalization
by constructing (in some important cases) convex bodies
whose volumes give the number of solutions.
Moreover, in \cite{Okounkov-Brunn-Minkowski, Okounkov-log-concave}, Okounkov considers a much more general situation
involving reductive group actions.

Following the work of Okounkov, in \cite{Askold-Kiumars-affine}, \cite{Lazarsfeld-Mustata}
and \linebreak \cite{Askold-Kiumars-Newton-Okounkov},
the authors associate convex bodies to linear systems, and line bundles, on varieties
without requiring presence of a group action.
These convex bodies encode information about the intersection theory of divisors (or linear systems). In particular,
when we have an ample line bundle, the volume of the corresponding body gives the self-intersection
number of the divisor class of the line bundle.



In the present paper we {adopt the general approach of} \cite{Askold-Kiumars-Newton-Okounkov} for
varieties equipped with a reductive group action, extending the original work of Okounkov.
Let $G$ be a connected reductive algebraic group over $\c$. \footnote{Throughout the paper we assume
the ground field to be $\c$, although most of the results hold over an arbitrary algebraically closed field.}
Let $V$ be a finite dimensional $G$-module and $X \subset \p(V)$ a $G$-invariant projective subvariety.
The homogeneous coordinate ring $\c[X] = A = \bigoplus_{k \geq 0} A_{k}$ is a graded $G$-algebra. Fixing {a non-zero element} $\ell \in A_{1}$, we can regard $A$ as the algebra $\bigoplus_{k \geq 0} L^{k}$ where $L$ is
the subspace of rational functions $\{f/\ell \mid f \in A_1\}$.
{(We recall that $L^{k}$ denotes the subspace of $\c(X)$ spanned by all the
products of $k$ elements of $L$.)} Under this identification the
action of $G$ on $A$ corresponds to a {\it twisted action} of $G$ on $\c(X)$ (Section \ref{subsec-inv-subspace}).
In this paper we generalize the above situation. Instead of a projective $G$-variety
we take an arbitrary $G$-variety $X$ of dimension $n$, and instead of the homogeneous coordinate ring, we
consider a graded algebra $A = \bigoplus_{k \geq 0}L_{k}$, with $L_k \subset \c(X)$ for all $k$.
Moreover we assume that $A$ is a $G$-algebra where the action comes from a twisted action of $G$ on $\c(X)$.
We will also assume that the following condition holds:
{\it $A$ is contained in a $G$-algebra $B$ generated
by the constants and finitely many elements in degree $1$.} Note that $A$ itself need not be
a finitely generated algebra. We denote the collection of such algebras by ${\bf A}_{G}(X)$ (Section \ref{subsec-G-alg}).

We would like to mention that, for a normal projective $G$-variety $X$,
the class of algebras in ${\bf A}_G(X)$ already contains all the $G$-linearized graded linear
systems on $X$ (in particular algebras of sections of $G$-linearized line bundles) (see Example
\ref{ex-ring-sections-G-alg}). Thus all the constructions and results
in this paper, in particular, apply to $G$-linearized graded linear systems.
For the purposes of this paper, the authors prefer working with the more general setup of $G$-algebras $A$ in ${\bf A}_G(X)$, instead of graded $G$-invariant linear systems. The reader used to the language of linear systems can think of an algebra $A$ in ${\bf A}_G(X)$ as a $G$-invariant graded linear system on $X$.

To $A$ we associate three convex bodies:
(1) the {\it moment body} $\Delta(A)$,
(2) the {\it multiplicity body} $\widehat{\Delta}(A)$ and,
(3) the {\it string body} $\widetilde{\Delta}(A)$,
together with natural linear projections $\widehat{\pi}: \widehat{\Delta}(A)
\to \Delta(A)$ and $\widetilde{\pi}: \widetilde{\Delta}(A) \to \widehat{\Delta}(A)$.
These bodies encode {information about the asymptotic behavior of} respectively: 
(1) the irreducible representations appearing in the homogeneous components
of $A$, (2) their multiplicities, and (3) the Hilbert function of $A$.

{We should point out that the convex bodies constructed in \cite{Askold-Kiumars-affine}, \cite{Askold-Kiumars-Newton-Okounkov} and \cite{Lazarsfeld-Mustata} do not require a group action and encode information about the Hilbert function. The convex body $\widetilde{\Delta}$ can be considered as a special case of these bodies.}

The constructions of these bodies are based on the notion of convex body associated to
a graded semigroup of integral points (Definition \ref{def-Newton-Okounkov-convex-set}):
Let $S \subset \z_{\geq 0} \times \z^{n}$ be a semigroup. Let $C(S) \subset \r \times \r^{n}$ denote the
cone which is the closure of the convex hull of $S \cup \{0\}$. Let $\pi: \r \times \r^{n} \to \r$ be the {projection onto the first factor}
and suppose $C(S)$ intersects the plane $\pi^{-1}(0)$ only at the origin.
Then the slice $\Delta(S) = C(S) \cap \pi^{-1}(1)$ is a convex body which
we call {\it the Newton-Okounkov body of $S$}. The volume of $\Delta(S)$ is responsible for the asymptotic {behavior} of number of elements
in the level $S_{k} = S \cap \pi^{-1}(k)$, as $k \to \infty$ (Theorem \ref{th-asymp-H_S-vol-Delta}).

Let $A=\bigoplus_{k}L_k \in {\bf A}_{G}(X)$ be a graded $G$-algebra. The {\it moment body} $\Delta(A)$ is the convex body associated
to the semigroup of highest weights: $$S(A) = \{(k, \lambda) \mid V_{\lambda} \textup{ appears in } L_{k}\},$$
where $V_\lambda$ denotes the irreducible representation with highest weight $\lambda$.
When $A$ is finitely generated, $S(A)$ is a finitely generated semigroup and
$\Delta(A)$ is a polytope. This generalizes the notion of moment polytope of
a projective $G$-variety. In Section \ref{sec-moment-body} we prove some basic results about $\Delta(A)$ including a
superadditivity property (Proposition \ref{prop-Delta_G-superadd}). We also {give lower and upper bounds} for
the moment body (Proposition \ref{prop-moment-lower-upper-bound}).

Let $v: \c(X) \setminus \{0\} \to \z^{n}$ be a valuation on the field of rational
functions $\c(X)$, satisfying the conditions in Section \ref{subsec-S-hat}. Following \cite{Okounkov-Brunn-Minkowski}, we define
the {\it multiplicity body}
$\widehat{\Delta}(A)$ to be the convex body associated to the semigroup:
$$\widehat{S}(A) = \bigcup_{k > 0} \{(k, v(f)) \mid f \in L^{U}_{k} \setminus \{0\}\},$$
where $L^{U}_{k}$ is the subspace of unipotent invariants in $L_{k}$.
The multiplicity body $\widehat{\Delta}(A)$ encodes information about the {asymptotic behavior} of multiplicities of irreducible
representations appearing in $A$.
In Section \ref{sec-multi-body}, we prove some basic properties of the multiplicity body including a
superadditivity property (Proposition \ref{prop-hat-Delta_G-superadd}).
We use the construction of $\widehat{\Delta}(A)$
to extend the notion of Duistermaat-Heckman measure of a projective $G$-variety
to graded algebras $A \in {\bf A}_{G}(X)$. Moreover, we prove {a Brunn-Minkowski type inequality} for this measure
(Corollary \ref{cor-Brunn-Mink-Okounkov-algebra}). Section \ref{subsec-Fujita} proves a Fuijta approximation type
theorem for D-H measures, namely the D-H measure of a graded algebra $A$ can be approximated arbitrarily closely
by the usual D-H measures of projective $G$-varieties (Theorem \ref{th-FUjita-D-H-G-algebra}).

In Section \ref{sec-string} we define the {\it string body} $\widetilde{\Delta}(A)$
associated to a graded $G$-algebra $A \in {\bf A}_{G}(X)$. It is the convex body fibered over the
multiplicity body $\widehat{\Delta}(A)$ whose fibers are the {\it string polytopes} of Littelmann and
Berenstein-Zelevinsky associated to irreducible representations of $G$
(see Section \ref{subsec-string-preliminary}). The multiplicity body encodes information
about the {asymptotic behavior} of the Hilbert function of $A$.

The idea of constructing the string body goes back to \cite{Okounkov-spherical} which proposes a similar
construction when $X$ is a spherical variety, using the Gelfand-Cetlin polytopes for $G = \GL(n, \c)$, or other classical groups. (Recall that a $G$-variety is spherical if a Borel subgroup has a dense orbit, see Section \ref{subsec-spherical}.) The idea of using string polytopes, in the case of
spherical varieties of a general reductive group, goes back to \cite{Brion-Alexeev}.

Extending the intersection theory of divisors on complete varieties, in \linebreak \cite{Askold-Kiumars-MMJ} the authors
develop an intersection theory for vector subspaces of the field of rational functions $\c(X)$. Let
$L_1, \ldots, L_n$ be finite dimensional subspaces of $\c(X)$.
The {\it intersection index} $[L_1, \ldots, L_n]$ is defined to be the number of solutions in $X$ of a generic system of
equations $f_1(x) =  \cdots = f_n(x) = 0$ {where $f_i \in L_i$}. When counting the solutions, {we
ignore} the solutions $x$ at which all the functions in some subspace $L_i$
vanish as well as the solutions at which at least one function from some
subspace $L_i$ has a pole. In \cite{Askold-Kiumars-MMJ}, it is shown that this intersection index is well-defined and enjoys properties analogous to
the intersection number of divisors. An important property of the {intersection index is multi-additivity} with respect to a natural product of
subspaces.

In Section \ref{sec-int-index-G} we use the above convex bodies to give formulae for
the intersection indices of invariant subspaces
in terms of volumes or integrals over these bodies
(Theorem \ref{th-self-int-index-G-subspace-vol}, Corollary \ref{cor-self-int-index-G-subspace-integ} and
Corollary \ref{cor-self-int-index-divisor}). These are in fact more concrete and accessible versions of a more general theorem
(\cite[Theorem 4.12]{Askold-Kiumars-Newton-Okounkov}) adapted for the $G$-invariant subspaces.
These in particular give formulae for the intersection numbers of divisors of $G$-linearized line bundles.

It is shown in \cite{Kiumars-string} that
the string convex bodies are special cases of the convex bodies considered in \cite{Askold-Kiumars-affine, Askold-Kiumars-Newton-Okounkov} and \cite{Lazarsfeld-Mustata}, at least when the $G$-variety under
consideration is spherical. In this case, any $G$-algebra $A \in {\bf A}_G(X)$ is multiplicity-free and
the multiplicity body $\widehat{\Delta}(A)$ coincides with
the moment body $\Delta(A)$ (see Section \ref{subsec-spherical}).
Moreover, if the algebra $A$ is finitely generated (e.g. the
homogeneous coordinate ring of a {projective spherical} $G$-variety) we see that the string body of $A$ is a polytope. These
construct a rich class of examples for which it is guaranteed that the convex bodies in \cite{Lazarsfeld-Mustata} and
\cite{Askold-Kiumars-affine} are polytopes.

In Section \ref{subsec-spherical} we address the case of spherical varieties, and recover a
formula of Brion and Kazarnovskii regarding the self-intersection number of divisors on spherical varieties
{(this formula itself is a partial generalization of the Bernstein-Kushnirenko theorem).}\\

\noindent {\bf Acknowledgement:} The authors would like to thank
Valentina Kiritchenko for reading the first draft and giving very useful suggestions, and
Megumi Harada for helpful discussions about the symplectic geometry side of the story.

\section{Preliminaries}
\subsection{Semigroups of integral points and Newton-Okounkov bodies} \label{subsec-semigroup}
In this section we give a brief review of results in \cite{Askold-Kiumars-Newton-Okounkov} on asymptotic
behavior of semigroups of integral points.

Let $S \subset \z^n$ be a semigroup of integral points, that is $S$ is closed under addition. Let:
\begin{itemize}
\item[-] $C(S)$ be the closure of the convex hull of $S \cup \{0\}$, that is, the smallest closed
convex cone (with apex at the origin) containing $S$.
\item[-] $G(S)$, the subgroup of $\z^n$ generated by $S$.
\item[-] $L(S)$, the {vector subspace} of $\r^n$ spanned by $S$.
\end{itemize}
The sets $C(S)$ and $G(S)$ lie in $L(S)$.
To $S$ we associate its {\it regularization} which is the semigroup $\Reg(S) = C(S) \cap G(S)$.
The regularization $\Reg(S)$ is a simpler semigroup with more points and containing the semigroup
$S$. In \cite[Section 1.1]{Askold-Kiumars-Newton-Okounkov} it is proved that {\it $\Reg(S)$
asymptotically approximates $S$.} More precisely:

\begin{Th}[Approximation Theorem] \label{th-approx}
Let $C' \subset C(S)$ be a convex cone which intersects
the boundary (in the topology of the {vector space} $L(S)$) of the cone $C(S)$
only at the origin. Then there exists a constant $N>0$ (depending on $C'$)
such that each point in the group $G(S)$ which lies in
$C'$ and whose distance from the origin is bigger than $N$ belongs to $S$.
\end{Th}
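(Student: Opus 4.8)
The plan is the following. After replacing $\z^n$ by $G(S)$ and $\r^n$ by the span $L(S)$ of $S$, I may assume $G(S)=\z^n$ and $L(S)=\r^n$; then $C(S)$ is a full-dimensional closed convex cone. I will also assume that $C'$ is closed, which is automatic in the cases of interest, so that the slice $K:=\{x\in C':|x|=1\}$ is compact; with this, the hypothesis says precisely that $C'\setminus\{0\}\subset\textup{int}(C(S))$. The first, geometric, step is to trap $C'$ between the semigroup and its cone. Since $\textup{int}(C(S))\subset\conv(S\cup\{0\})$, a short convexity argument shows that every point of $\textup{int}(C(S))$ lies in the interior of $\textup{cone}(F)$ for some finite $F\subset S$; using compactness of $K$ I can then pick finitely many $a_1,\ldots,a_m\in S$ --- enlarging the list so that $a_1,\ldots,a_m$ also generate the group $\z^n$ --- with $C'\setminus\{0\}\subset\textup{int}(\sigma)$, where $\sigma:=\textup{cone}(a_1,\ldots,a_m)$. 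A second compactness argument on $K$ then produces a constant $c>0$ such that every $g\in C'$ admits a representation $g=\sum_i t_ia_i$ with $t_i\ge c\,|g|$ for all $i$.

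The second step is the arithmetic of a fundamental domain. Let $T$ be the sub-semigroup of $S$ generated by $a_1,\ldots,a_m$ (so $T\subset S$), set $\Pi=\{\sum_i t_ia_i:0\le t_i\le 1\}$, and let $R=\Pi\cap\z^n$, a finite set containing $0$. The standard argument then shows that every lattice point of $\sigma$ lies either in $R$ or in $\rho+T$ for some $\rho\in R$. The difficulty is that a residue $\rho\in R$, although it belongs to $\z^n=G(S)$, need not belong to $S$, so this decomposition does not by itself express lattice points of $\sigma$ through the semigroup. The device that removes the difficulty is to use $G(S)=S-S$: write each $\rho\in R$ as $\rho=u_\rho-w_\rho$ with $u_\rho,w_\rho\in S$, and set $w:=\sum_{\rho\in R}w_\rho\in S$; then $\rho+w\in S$ for \emph{every} $\rho\in R$, i.e.\ a single fixed element $w\in S$ simultaneously corrects all residues.

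Now I assemble the pieces. Given $g\in\z^n\cap C'$ with $|g|$ large, choose a representation $g=\sum_i t_ia_i$ with $t_i\ge c\,|g|$; then $g=\rho+\sum_i\lfloor t_i\rfloor a_i$ for some $\rho\in R$. Writing the fixed element as $w=\sum_i k_ia_i$ with $k_i\in\z$ --- possible since the $a_i$ generate $\z^n$ --- I obtain
\[
g=(\rho+w)+\sum_i(\lfloor t_i\rfloor-k_i)\,a_i .
\]
As soon as $c\,|g|>\max_i|k_i|+2$, all the coefficients $\lfloor t_i\rfloor-k_i$ are $\ge 1$, so the second summand lies in $T\subset S$, while $\rho+w\in S$; hence $g\in S+S\subseteq S$. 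Thus $N:=(\max_i|k_i|+2)/c$ has the required property. I expect the residue-absorption step to be the real content of the argument: the essential point is not the fundamental-domain bookkeeping but the observation that the fixed ``debt'' $w$ that must be paid to push every residue into $S$ can be repaid out of the coefficients $\lfloor t_i\rfloor$, which grow linearly in $|g|$ --- and it is exactly here, through the uniform bound $t_i\ge c\,|g|$, that the hypothesis that $C'$ stays away from $\partial C(S)$ is used.
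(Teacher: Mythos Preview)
The paper does not supply its own proof of this theorem; it is quoted from \cite[Section 1.1]{Askold-Kiumars-Newton-Okounkov}. Your argument is correct and is essentially the standard one found there: trap $C'$ inside a finitely generated subcone $\sigma=\textup{cone}(a_1,\dots,a_m)$ with $a_i\in S$ generating $G(S)$, reduce every lattice point to a residue in the fundamental parallelotope $\Pi$, and use $G(S)=S-S$ to absorb each residue into $S$ by a single fixed element $w\in S$, repaying the cost of $w$ out of the coefficients $\lfloor t_i\rfloor$, which grow like $|g|$ because $C'$ stays away from $\partial C(S)$.

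Two small remarks. First, the compactness step (existence of $c>0$ with $t_i\ge c\,|g|$) is cleanest to phrase as: for $v=\sum_i a_i$, the function $x\mapsto\sup\{c\ge 0:x-cv\in\sigma\}$ is positive on $K$ and lower semicontinuous, so by compactness it is bounded below by some $c>0$; then $x-cv\in\sigma$ gives the desired representation. Your sketch is fine but this makes the uniformity transparent. Second, your assumption that $C'$ is closed is not merely a convenience: without it (or the equivalent hypothesis $\overline{C'}\cap\partial C(S)=\{0\}$) the conclusion can genuinely fail. For instance, take $S=\{(a,b)\in\z_{\ge 0}^2: b\le a^2\}$; then $C(S)$ is the closed first quadrant, $G(S)=\z^2$, and with $C'=\textup{int}(C(S))$ the points $(1,k)$ lie in $G(S)\cap C'$ for all $k\ge 2$ but never in $S$. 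So ``automatic in the cases of interest'' is the right caveat, and in practice the theorem is always applied with $C'$ closed.
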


We now consider semigroups in $\z_{\geq 0} \times \z^n$. Let
$\pi: \r \times \r^n \to \r$ denote the projection on the first factor.
Let $S \subset \z_{\geq 0} \times \z^n$ be a semigroup and let
$S_k = S \cap \pi^{-1}(k)$ be the set of points in $S$ at level $k$.
Then $\pi(S)$ consists of $k$ such that $S_k \neq \{0\}$.
It is a subsemigroup in $\z_{\geq 0}$.
Let $m(S)$ be the index of the subgroup generated by $\pi(S)$ in $\z$. For sufficiently
large $k$, we have $k \in \pi(S)$ if and only if $k$ is divisible by $m(S)$.

\begin{Def} \label{def-non-neg-semigroup}
We call a semigroup $S \subset \z_{\geq 0} \times \z^n$ a {\it non-negative semigroup}
if it is not contained in the hyperplane $\pi^{-1}(0)$.
Moreover we assume for simplicity that $m(S) = 1$.
(The assumption $m(S) = 1$ is not crucial and one can slightly modify all the statements
that follow so that they hold without this assumption.)
\end{Def}

As above let $C(S)$ be the
smallest closed convex cone containing $S$, $G(S)$ the subgroup of
$\z^{n+1} = \z \times \z^n$ generated by $S$, and $L(S)$ the rational subspace in
$\r^{n+1}$ spanned by $S$. If in addition the cone $C(S)$ intersects the hyperplane
$\pi^{-1}(0)$ only at the origin,
$S$ is called a {\it strongly non-negative semigroup}.
We denote the group $G(S) \cap \pi^{-1}(0)$ by $\Lambda(S)$ and call it the
{\it lattice associated to the non-negative semigroup $S$}. The index of this sublattice in
$\{0\} \times \z^n$ will be denoted by $\ind(\Lambda(S))$ or simply $\ind(S)$.
Finally, the number of points in $S_k$ is denoted by $H_S(k)$. {The function }$H_S$ is called the
{\it Hilbert function of the semigroup $S$}.


\begin{Def}[Newton-Okounkov convex set] \label{def-Newton-Okounkov-convex-set}
We call the projection of the convex set $C(S) \cap \pi^{-1}(1)$ to $\r^n$ (under the
projection onto the second factor $(1, x) \mapsto x$),
the {\it Newton-Okounkov convex set of the semigroup $S$} and denote it by $\Delta(S)$.
In other words,
$$\Delta(S) = \overline{\conv(\bigcup_{k>0} \{x/k \mid (k, x) \in S_k\})}.$$
If $S$ is strongly non-negative then $\Delta(S)$ is compact and hence a convex body which we call
the {\it Newton-Okounkov body of $S$}.
\end{Def}


Let us define the notion of volume normalized with respect to a lattice.
\begin{Def}[Normalized volume] \label{def-int-volume}
Let $\Lambda \subset \r^n$ be a lattice of full rank $n$.
Let $E \subset \r^n$ be a rational affine subspace of dimension $q$. That is,
$E$ is parallel to a {vector subspace} of dimension $q$ which is rational with respect to $\Lambda$.
The {\it Lebesgue measure normalized with respect to the lattice $\Lambda$} in $E$ is the translation invariant
Lebesgue measure $d\gamma$ in $E$ {normalized so that} the smallest measure of a $q$-dimensional
parallelepiped with vertices in $E \cap \Lambda$ is equal to $1$. The measure of a subset
$A \subset E$ will be called its {\it normalized volume}
and denoted by $\Vol_q(A)$ (whenever the lattice $\Lambda$ is clear from the context).
\end{Def}

Let $\Reg(S)$ be the regularization of $S$ and let $H_{\Reg(S)}$ be its Hilbert function.
It follows from the Approximation Theorem (Theorem \ref{th-approx}) that
$H_S(k)$ and $H_{\Reg(S)}(k)$ have the same {asymptotic behavior} as $k$ goes to infinity.
This implies that the Newton-Okounkov convex set $\Delta(S)$ is responsible for the
{asymptotic behavior} of the Hilbert function of $S$ (see \cite[Theorem 1.13]{Askold-Kiumars-Newton-Okounkov}):

\begin{Th} \label{th-asymp-H_S-vol-Delta}
Let $S$ be a strongly non-negative semigroup.
\begin{enumerate}
\item The function $H_S(k)$ grows like
$a_qk^q$ where $q$ is the dimension of the convex body $\Delta(S)$ and the {\it $q$-th growth coefficient
$a_q = \lim_{k \to \infty} H_S(k)/k^q$}
is equal to $\Vol_q(\Delta(S))$, where $\Vol_q$ is normalized with respect to the lattice $\Lambda(S)$.
\item Let $f: \r^n \to \r$ be a polynomial of degree $d$ and $f^{(d)}$ the
homogeneous component of $f$ of degree $d$. Then
$$\lim_{k \to \infty}\frac {\sum _{(k, x) \in S_{k}}f(x) }{k^{q+d}}
= \int_{\Delta(S)}f^{(d)}(x)d\gamma,$$ where $d\gamma$ is
the Lebesgue measure on $\Delta(S)$ normalized with respect to the lattice $\Lambda(S)$.
\end{enumerate}
\end{Th}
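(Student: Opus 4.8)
The plan is to reduce everything to the regularization $\Reg(S) = C(S) \cap G(S)$, for which the Hilbert function becomes a plain lattice-point count, and then control the difference between $S$ and $\Reg(S)$ using the Approximation Theorem (Theorem \ref{th-approx}). For part (1), first note that since $S$ is strongly non-negative, $C(S)$ spans the rational subspace $L(S)$ and meets $\pi^{-1}(0)$ only at the origin; hence the slice $C(S) \cap \pi^{-1}(1)$, and therefore its projection $\Delta(S)$, has dimension $q = \dim L(S) - 1$, and $\Lambda(S) = G(S) \cap \pi^{-1}(0)$ is a lattice of full rank $q$ in $L(S) \cap \pi^{-1}(0)$. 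Now $H_{\Reg(S)}(k)$ is exactly the number of points of $G(S)$ in $C(S) \cap \pi^{-1}(k)$. Since $m(S) = 1$ we have $k \in \pi(G(S))$, so $G(S) \cap \pi^{-1}(k)$ is a translate of $\Lambda(S)$ inside the hyperplane $\pi^{-1}(k)$, while $C(S) \cap \pi^{-1}(k)$ is, under $(k,x) \mapsto x$, the dilate $k\,\Delta(S)$. Thus $H_{\Reg(S)}(k)$ counts the points of a fixed translate of $\Lambda(S)$ inside $k\,\Delta(S)$, and the classical asymptotics for lattice points in a dilation of a bounded convex body give $\lim_{k\to\infty} H_{\Reg(S)}(k)/k^q = \Vol_q(\Delta(S))$, with $\Vol_q$ normalized with respect to $\Lambda(S)$.

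Next I would compare $H_S$ with $H_{\Reg(S)}$. Fix $\epsilon > 0$ and choose a convex subcone $C' \subset C(S)$, namely the cone over a convex body $\Delta' $ in the relative interior of the slice $C(S)\cap\pi^{-1}(1)$, chosen so close to the slice that $(C(S) \setminus C') \cap \pi^{-1}(1)$ has normalized volume $< \epsilon$; this is possible because the boundary of the convex body $\Delta(S)$ has measure zero. Then $C'$ meets the boundary of $C(S)$ (in the topology of $L(S)$) only at the origin, so by Theorem \ref{th-approx} there is $N$ with $G(S) \cap C' \cap \{\,\|z\| > N\,\} \subseteq S$. Since every point of $\pi^{-1}(k)$ has norm at least $k$, for $k > N$ we get $C' \cap G(S) \cap \pi^{-1}(k) \subseteq S_k$, whence
\[
0 \;\le\; H_{\Reg(S)}(k) - H_S(k) \;\le\; \#\bigl( (C(S) \setminus C') \cap G(S) \cap \pi^{-1}(k) \bigr).
\]
The region $(C(S) \setminus C') \cap \pi^{-1}(k)$ is the $k$-dilate of the Jordan-measurable set $(C(S)\setminus C')\cap\pi^{-1}(1)$ of normalized volume $< \epsilon$, so the right-hand side is at most $\epsilon k^q + o(k^q)$ by the same lattice-point asymptotics. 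Letting $\epsilon \to 0$ gives $H_S(k) - H_{\Reg(S)}(k) = o(k^q)$, which proves part (1). (This also re-proves the remark preceding the theorem.)

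For part (2) the argument is the same, carrying the weight $f$. Over $\Reg(S)$, writing $x = ky$ turns $\sum_{(k,x) \in \Reg(S)_k} f(x)$ into $\sum_j f(ky_j)$, where the $y_j$ run over the points of a $(1/k)$-scaled translate of $\Lambda(S)$ lying in $\Delta(S)$. Since $f(ky) = k^d f^{(d)}(y) + O(k^{d-1})$ uniformly for $y$ in the bounded set $\Delta(S)$, and the number of $y_j$ is $O(k^q)$, dividing by $k^{q+d}$ leaves $\frac{1}{k^q}\sum_j f^{(d)}(y_j) + O(k^{-1})$, and the main term is a Riemann sum converging to $\int_{\Delta(S)} f^{(d)}(x)\,d\gamma$ because the scaled lattice $\tfrac1k\Lambda(S)$ has cell volume $\sim k^{-q}$ in the normalization fixed by $\Lambda(S)$. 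To pass from $\Reg(S)$ to $S$, note that $(k,x) \in S_k$ forces $\|x\| \le Ck$, hence $|f(x)| \le C' k^d$; since, for $k > N$, the sums over $\Reg(S)_k$ and $S_k$ differ only over points lying in the collar $(C(S)\setminus C') \cap \pi^{-1}(k)$, of which there are at most $\epsilon k^q + o(k^q)$ by the previous paragraph, we get $\bigl|\sum_{\Reg(S)_k} f - \sum_{S_k} f\bigr| \le C' k^d(\epsilon k^q + o(k^q)) = o(k^{q+d})$ after $\epsilon \to 0$. Hence the limit for $S$ equals that for $\Reg(S)$, namely $\int_{\Delta(S)} f^{(d)}(x)\,d\gamma$.

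The routine ingredients — the asymptotics $\#(\Lambda \cap kK)/k^q \to \Vol_q(K)$ for a bounded Jordan-measurable body $K$, and Riemann-sum convergence — I would simply invoke. The real content is the collar estimate: the Approximation Theorem only controls points of $G(S)$ inside a subcone that avoids the boundary of $C(S)$, so one must bound the contribution of the omitted boundary strip, negligibly in count for part (1) and, after the crude but essential bound $|f| = O(k^d)$ at level $k$, negligibly in weighted sum for part (2). This is the step I expect to be the main obstacle.
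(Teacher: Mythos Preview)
Your proposal is correct and follows exactly the approach the paper indicates: reduce to $\Reg(S)$ via the Approximation Theorem and then invoke standard lattice-point asymptotics in dilates of a convex body (the paper states this reduction in the paragraph preceding the theorem and cites \cite[Theorem 1.13]{Askold-Kiumars-Newton-Okounkov} for the details). Your collar argument is precisely how one makes the phrase ``$H_S$ and $H_{\Reg(S)}$ have the same asymptotic behavior'' rigorous, and your treatment of part (2) via Riemann sums plus the $|f|=O(k^d)$ bound on the collar is the expected elaboration.
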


One defines a {\it levelwise addition} operation on the subsets of $\r \times \r^n$:
for each subset $A \subset \r \times \r^n$ and $k \in \r$,
let $A_k$ denote the set of points of $A$ in level $k$, i.e. $A_k = A \cap \pi^{-1}(k)$.
Let $A, B \subset \r \times \r^n$. Define the set $A \oplus_t B \subset \r \times \r^n$ by:
$$A \oplus_t B = \{(x+y, k) \mid k \in \r,~ (x,k) \in A_k,~ (y,k) \in B_k\}.$$

Let $S'$, $S'' \subset \z_{\geq 0} \times \z^n$ be two
non-negative semigroups. One sees that $S=S' \oplus_t S''$ is
a non-negative semigroup, and moreover:
$$\Delta(S) = \Delta(S') + \Delta(S''),$$ where the addition in the right-hand side is the Minkowski
sum of convex sets defined by $X+Y = \{x+y \mid x \in X,~ y \in Y\}$.

\begin{Ex} \label{ex-semigroup-Newton-Okounkov-set}
\noindent 1) Let $S$ be the non-negative semigroup consisting of all the integral points
in $\z_{\geq 0} \times \z$ lying to the right of the broken line $|y| = x$ (where $x$ and $y$ are the first and
second coordinates respectively). The subspace $L$ is the whole $\r^2$.
The cone $C$ is the cone generated by the vectors $(1,1)$ and $(1,-1)$.
The Newton-Okounkov convex body $\Delta(S)$ is the line segment $[-1,1]$.\\

\noindent 2) Let $S$ be the non-negative semigroup consisting of all the integral points
lying to the right of the curve $\sqrt{|y|} = x$. Then the cone $C$ of $S$ is the whole right half-plane
$\{x \geq 0\}$ and thus $S$ is not strongly non-negative. The Newton-Okounkov convex set $\Delta(S)$
is the whole line $\r$ which is unbounded.
\end{Ex}

\subsection{Linear transformations between semigroups} \label{subsec-D-H-semigroup}
Let $T:\r^n \to \r^m$ be a linear map where $n \geq m$. Moreover assume $T(\z^n) \subset \z^m$. 
Let $\tilde{T} = \textup{Id} \oplus T: \r \times \r^n \to \r \times \r^m$,
that is, for $x \in \r^n$, $x_{1} \in \r$ we have $\tilde{T}(x_1, x) = (x_1, T(x))$.

Let $S' \subset \z_{\geq 0} \times \z^n$ be a strongly non-negative
semigroup and $S = \tilde{T}(S')$ its image under $\tilde{T}$. Then $S$ is a {strongly
non-negative semigroup in $\z_{\geq 0} \times \z^m$.}
Let $q = \dim \Delta(S)$ and $q' = \dim \Delta(S')$. We have
$\tilde{T}(L(S')) = L(S)$, $\tilde{T}(C(S')) = C(S)$, $\tilde{T}(G(S')) = G(S)$, and
$T(\Delta(S')) = \Delta(S)$.

For a point $p \in \r^n$, let $\delta_p$ denote the Dirac measure supported at the single point $p$.
{Given $k>0$, define the {\it $k$-th multiplicity measure} $d\mu_{k}$ on $\Delta(S)$ by:}
$$ d\mu_{k} = \sum_{(k, x) \in S_{k}} \#(\tilde{T}^{-1}(k, x) \cap S'_k)\delta_{x/k}.$$
It is a finitely supported measure where a point $z$ has nonzero measure if
$(k, kz) \in S_{k}$, in which case the measure of $z$ is equal to {the number of
points in} $\tilde{T}^{-1}(k, kz) \cap S'_k$. Take a subset
$U_0 \subset \Delta(S)$ and let $U = \{1\} \times U_0$ be its shift to level $1$. We have:
\begin{equation} \label{equ-dmu}
\int_{U_0} d\mu_{k} = \#(\tilde{T}^{-1}(kU) \cap S'_{k}).
\end{equation}
It is clear that the total mass of $d\mu_{k}$ is $\#S'_{k}$.

\begin{Th} \label{th-D-H-semigroup}
The measures $d\mu_{k}/k^{q'}$ weakly converge to a measure $d\mu$ supported on $\Delta(S)$.
Moreover, $d\mu$ is the push-forward of the Lebesgue measure on $\Delta(S')$ to $\Delta(S)$
(normalized with respect to the lattice $\Lambda(S')$).
Thus:
$$\Vol_{q'}(\Delta(S')) = \int_{\Delta(S)} d\mu.$$
\end{Th}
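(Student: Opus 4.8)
The plan is to deduce Theorem \ref{th-D-H-semigroup} from the asymptotic results of Section \ref{subsec-semigroup}, applied fiberwise. The statement really contains three assertions: weak convergence of $d\mu_k/k^{q'}$ to some measure $d\mu$, the identification of $d\mu$ as the pushforward $T_*$ of normalized Lebesgue measure on $\Delta(S')$, and the resulting volume identity. The last follows immediately from the second by integrating the constant function $1$, since the total mass of $d\mu_k$ is $\#S'_k \sim \Vol_{q'}(\Delta(S'))\,k^{q'}$ by Theorem \ref{th-asymp-H_S-vol-Delta}(1); so the work is in the first two.

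First I would set up the approximation. By the Approximation Theorem (Theorem \ref{th-approx}), $S'$ differs from its regularization $\Reg(S') = C(S') \cap G(S')$ only near the boundary of $C(S')$, and the same holds for $S = \tilde T(S')$. Passing to regularizations changes the measures $d\mu_k$ only by a contribution concentrated near $\partial\Delta(S)$ and of lower order in $k$, so it suffices to prove the statement for the regularized semigroups; there the fibers $\tilde T^{-1}(k,x) \cap G(S')$ become genuine cosets of the sublattice $\ker(\tilde T) \cap G(S') \cong \Lambda'$ (say of rank $q' - q$) intersected with the slice of the cone $C(S')$, so their cardinality is an honest lattice-point count in a convex polytope. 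The key point is then a two-step asymptotic: for a fixed Borel set $U_0 \subset \Delta(S)$ with $d\mu(\partial U_0) = 0$, formula \eqref{equ-dmu} gives $\int_{U_0} d\mu_k = \#(\tilde T^{-1}(kU) \cap S'_k)$, and $\tilde T^{-1}(kU) \cap \Reg(S')$ is, up to boundary effects, the set of $G(S')$-points in the convex region $C(S') \cap \tilde T^{-1}(\{1\}\times kU_0)$. By the standard lattice-point asymptotics underlying Theorem \ref{th-asymp-H_S-vol-Delta}, the number of such points, divided by $k^{q'}$, converges to the normalized volume $\Vol_{q'}$ of $C(S') \cap \pi^{-1}(1) \cap (T\circ\mathrm{pr})^{-1}(U_0)$, which by definition of the pushforward is exactly $(T_*\,d\gamma')(U_0)$ where $d\gamma'$ is normalized Lebesgue measure on $\Delta(S')$. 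This gives convergence of $\int_{U_0} d\mu_k/k^{q'}$ to $\int_{U_0} T_*\,d\gamma'$ on a rich enough class of sets, hence weak convergence.

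The cleanest way to organize the middle step is probably to avoid proving a fresh lattice-point theorem and instead reduce directly to Theorem \ref{th-asymp-H_S-vol-Delta}(2): approximate the indicator of $U_0$ from above and below by polynomials (or apply the theorem to polynomial weights and use a Stone–Weierstrass / monotone-class argument), so that $\sum_{(k,x)\in S_k} \#(\tilde T^{-1}(k,x)\cap S'_k)\, f(x/k)$ can be rewritten as $\sum_{(k,y)\in S'_k} g(y/k)$ with $g = f \circ T$, whose $k \to \infty$ asymptotics Theorem \ref{th-asymp-H_S-vol-Delta}(2) computes as $\int_{\Delta(S')} g\, d\gamma' = \int_{\Delta(S')} f(T(y))\, d\gamma'(y) = \int_{\Delta(S)} f\, d(T_*\gamma')$. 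Testing against all continuous $f$ on the compact set $\Delta(S)$ yields both the weak convergence and the identification of the limit simultaneously.

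The main obstacle is the boundary estimate: controlling the error introduced by replacing $S'$ with $\Reg(S')$, and more subtly the error in the fiberwise lattice-point count for fibers that sit near $\partial C(S')$, where the cone and its regularization diverge and where the fiber polytopes can degenerate. One has to check that the total mass these bad fibers carry is $o(k^{q'})$ — this is where the hypothesis that $S'$ is \emph{strongly} non-negative (so $\Delta(S')$ is compact) and the uniformity in the constant $N$ of Theorem \ref{th-approx} on subcones get used, and it is the step I would write out most carefully. Everything else — linearity of $\tilde T$, the identities $\tilde T(C(S')) = C(S)$, $T(\Delta(S')) = \Delta(S)$, finite-dimensionality of the fibers — is routine and already recorded in the paragraph preceding the theorem.
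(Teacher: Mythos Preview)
Your proposal is correct, and your second route---rewriting $\int f\,d\mu_k$ as $\sum_{(k,y)\in S'_k}(f\circ T)(y/k)$, invoking Theorem~\ref{th-asymp-H_S-vol-Delta}(2) for each homogeneous component, and then passing to continuous $f$ by Stone--Weierstrass on the compact $\Delta(S)$---is a genuinely clean alternative the paper does not take. The paper's own argument is closer to your first route but considerably shorter: it tests directly against a convex open $U_0$ contained in the interior of $\Delta(S)$, sets $U' = \tilde{T}^{-1}(\{1\}\times U_0)\cap C(S')$, and observes via \eqref{equ-dmu} that $\int_{U_0}d\mu_k = \#(kU'\cap S'_k)$; the Approximation Theorem then gives $\#(kU'\cap S'_k)/k^{q'} \to \Vol_{q'}(U')$ in one stroke by comparison with $\#(kU'\cap G(S'))$. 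There is no global regularization of $S'$, no fiberwise lattice-point count, and no tracking of degenerating fiber polytopes. Your boundary worry is legitimate---$U'$ can still meet $\partial C(S')$ in the fiber direction even when $U_0$ avoids $\partial\Delta(S)$---but this is absorbed by the standard device of exhausting $C(S')$ by strictly interior subcones $C'$, which the paper leaves implicit. In short: both of your approaches work; the paper organizes the same core ingredient (Theorem~\ref{th-approx}) more economically by testing on sets rather than regularizing first, while your polynomial-testing variant trades the set-theoretic argument for a functional one and makes the weak convergence perhaps more transparent.
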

\begin{proof}
We show that the measures $d\mu_{k}/k^{q'}$ converge to the push-forward of the
normalized Lebesgue measure on $\Delta(S')$. To this end, let $U_0 \subset \Delta(S)$ be
a convex open subset which does not intersect the boundary of $\Delta(S)$. Let $U = \{1\} \times U_0$ be the shift
of $U_0$ to the level $1$. Let $U' = \tilde{T}^{-1}(U) \cap C(S')$. It suffices to show that
$$\lim_{k \to \infty}(1/k^{q'})\int_{U_0} d\mu_{k} = \Vol_{q'}(U').$$
One knows that
$$\Vol_{q'}(U') = \lim_{k \to \infty} \frac{\#(kU' \cap G(S'))}{k^{q'}}.$$
{Applying the Approximation Theorem (Theorem \ref{th-approx}) to the semigroup $S'$ we obtain that:}
\begin{eqnarray*}
\Vol_{q'}(U') &=& \lim_{k \to \infty} \frac{\#(kU' \cap S'_k)}{k^{q'}}, \cr
&=& \lim_{k \to \infty} \frac{\int_{U_0} d\mu_{k}}{k^{q'}} \quad \textup{ (from (\ref{equ-dmu})),}
\end{eqnarray*}
which proves the claim.
\end{proof}

Finally, we prove a theorem about the relationship between the measures associated to $(S, S')$
and their finitely generated subsemigroups.
Take an integer $k > 0$ such that $S_k \neq \emptyset$.
Let ${\bf S}_k$ (respectively ${\bf S}'_k$) denote the subsemigroup of
$S$ (respectively $S'$) generated by the level $S_k$ (respectively $S'_k$).
Since $\tilde{T}(S') = S$ we see that $\tilde{T}({\bf S}'_k) = {\bf S}_k$.
Similarly to above, let
$d\rho_k$ denote the measure associated to the pair
$({\bf S}_k, {\bf S}'_k)$, that is, for large $k$:
$$d\rho_k = \lim_{\ell \to \infty} (1/\ell^{q'})
\sum_{(k\ell, x) \in {\bf S}_{k}} \#(\tilde{T}^{-1}(k\ell, x) \cap {\bf S}'_k) \delta_{x/\ell}.$$
(As in Theorem \ref{th-D-H-semigroup} one shows that the above limit of measures exists.)
Note that for large $k$, the subspace $L({\bf S}'_k)$ coincides with $L(S')$ and
the lattice $\Lambda({\bf S}'_k)$ coincides with $\Lambda(S')$.
Let $O_{1/k}: \r^m \to \r^m$ denote multiplication by the scalar $1/k$.
\begin{Th} \label{th-Fujita-D-H-semigroup}
As $k \to \infty$, the measures $O_{1/k}^*(d\rho_k)/k^{q'}$ converge weakly to the measure $d\mu$
associated to the pair $(S', S)$. Here $O_{1/k}^*$ denote the push-forward measure by the map
$O_{1/k}$.
\end{Th}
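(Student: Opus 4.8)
\emph{Proof strategy.}
The idea is that both $O_{1/k}^{*}(d\rho_k)/k^{q'}$ and $d\mu$ are push\nobreakdash-forwards, along (restrictions of) the same linear map $T$, of normalized Lebesgue measures on Newton--Okounkov bodies, and that these Newton--Okounkov bodies converge in volume by a Fujita\nobreakdash-type argument; since push\nobreakdash-forward does not increase total variation, the convergence of the measures will follow. Concretely, I plan to reduce the statement to the assertion \emph{that the normalized Lebesgue measure of $\Delta({\bf S}'_k)$ converges, as $k\to\infty$, to the normalized Lebesgue measure of $\Delta(S')$.}

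First I would put $O_{1/k}^{*}(d\rho_k)/k^{q'}$ in closed form. Fix $k$ large enough that $L({\bf S}'_k)=L(S')$, $\Lambda({\bf S}'_k)=\Lambda(S')$, and $L({\bf S}_k)=L(S)$. Since ${\bf S}_k$ and ${\bf S}'_k$ are supported on the levels divisible by $k$, dividing the first coordinate by $k$ converts them into strongly non-negative semigroups with $m=1$, to which Theorem \ref{th-D-H-semigroup} applies; under this rescaling $\Delta(\cdot)$ is multiplied by $k$, while $\Lambda({\bf S}'_k)$ and the number $q'$ are unchanged. Comparing the conclusion of Theorem \ref{th-D-H-semigroup} for this rescaled pair with the defining limit of $d\rho_k$ identifies $d\rho_k$ with the push-forward under $T$ of the normalized (with respect to $\Lambda(S')$) Lebesgue measure of $k\Delta({\bf S}'_k)$ onto $k\Delta({\bf S}_k)$. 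Because $T$ commutes with the scalings $O_{1/k}$, and $q'$-dimensional normalized Lebesgue measure is multiplied by $k^{q'}$ under $O_{1/k}$, this gives
$$ O_{1/k}^{*}(d\rho_k)/k^{q'} \;=\; \nu_k , $$
where $\nu_k$ denotes the push-forward under $T$ of the normalized (with respect to $\Lambda(S')$) Lebesgue measure of $\Delta({\bf S}'_k)$, a measure supported on $\Delta({\bf S}_k)\subseteq\Delta(S)$. Likewise, Theorem \ref{th-D-H-semigroup} applied directly to $(S,S')$ gives $d\mu=\nu$, the push-forward under $T$ of the normalized (with respect to $\Lambda(S')$) Lebesgue measure of $\Delta(S')$. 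So it remains to prove $\nu_k\to\nu$ weakly.

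Next I would prove the Fujita-type estimate $\Vol_{q'}(\Delta({\bf S}'_k))\to\Vol_{q'}(\Delta(S'))$ as $k\to\infty$ (normalized volumes with respect to $\Lambda(S')$). One always has $\Delta({\bf S}'_k)=\conv\{x/k\mid (k,x)\in S'_k\}\subseteq\Delta(S')$, so only the reverse volume inequality is at stake. Given $\varepsilon>0$, shrink $\Delta(S')$ slightly toward a relative interior point to obtain a convex body $\Delta''\subseteq\textup{relint}\,\Delta(S')$ with $\Vol_{q'}(\Delta'')>\Vol_{q'}(\Delta(S'))-\varepsilon$, and let $C'=\textup{cone}(\{1\}\times\Delta'')$; then $C'\subseteq C(S')$ meets the boundary of $C(S')$ only at the origin. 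By the Approximation Theorem (Theorem \ref{th-approx}) there is $N$ such that every point of $G(S')$ lying in $C'$ at distance $>N$ from the origin belongs to $S'$; hence for $k>N$ every lattice point of $G(S')$ at level $k$ that lies in $C'$ is in $S'_k$, so $\Delta({\bf S}'_k)$ contains the convex hull of the points $x/k$ with $(k,x)\in G(S')$ and $x/k\in\Delta''$. As $k\to\infty$ these rescaled lattice points become dense in the affine hull of $\Delta(S')$, so that convex hull converges to $\Delta''$, giving $\liminf_k\Vol_{q'}(\Delta({\bf S}'_k))\ge\Vol_{q'}(\Delta'')>\Vol_{q'}(\Delta(S'))-\varepsilon$; letting $\varepsilon\to0$ finishes this step.

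Finally I would conclude. For large $k$ the normalized Lebesgue measures of $\Delta({\bf S}'_k)$ and of $\Delta(S')$ are taken with respect to the same lattice $\Lambda(S')$, hence the first is simply the restriction of the second to the subbody $\Delta({\bf S}'_k)\subseteq\Delta(S')$, and their total variation distance equals $\Vol_{q'}(\Delta(S'))-\Vol_{q'}(\Delta({\bf S}'_k))\to0$ by the previous paragraph. Since $T|_{\Delta({\bf S}'_k)}$ agrees with $T|_{\Delta(S')}$ and push-forward of measures is a contraction for the total variation norm, $\nu_k\to\nu=d\mu$ in total variation, a fortiori weakly; together with the first step this is exactly the asserted weak convergence $O_{1/k}^{*}(d\rho_k)/k^{q'}\to d\mu$. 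I expect the Fujita-type step to be the main obstacle: there one must invoke the Approximation Theorem to guarantee that for \emph{all} large $k$ simultaneously the level-$k$ part of $S'$ densely fills an interior subcone of $C(S')$, so that the rescaled convex hulls exhaust $\Delta(S')$ in volume. The bookkeeping in the first step --- the scaling maps $O_{1/k}$, the factors $k^{q'}$, and matching the lattice normalizations --- is routine by comparison.
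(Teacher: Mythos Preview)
Your proposal is correct and follows essentially the same approach as the paper. Both arguments identify $O_{1/k}^{*}(d\rho_k)/k^{q'}$ and $d\mu$ as push-forwards under $T$ of normalized Lebesgue measures on $(1/k)\Delta'_k=\Delta({\bf S}'_k)$ and on $\Delta(S')$ respectively, and then deduce weak convergence from convergence of these bodies; the paper simply asserts Hausdorff convergence $(1/k)\Delta'_k\to\Delta(S')$ and says ``the claim follows easily,'' whereas you unpack this step by proving volume convergence via the Approximation Theorem and concluding through total variation --- which is precisely the content the paper leaves implicit.
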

\begin{proof}
By Theorem \ref{th-D-H-semigroup} we know that the measure $d\mu$ is
the push-forward (under the linear transformation $T$)
of the normalized Lebesgue measure on the body $\Delta(S')$ to $\Delta(S)$.
Let $\Delta'_k$ (respectively $\Delta_k$) denote the convex hull of $\{x' \mid (k, x') \in S'_k\}$
(respectively $\{x \mid (k, x) \in S_k \}$). {As in the proof of Theorem \ref{th-D-H-semigroup}}
one sees that the measure $O_{1/k}^*(d\rho_k)/k^{q'}$ is the push-forward of the normalized Lebesgue measure on
$(1/k)\Delta'_k$ to $(1/k)\Delta_k$.
But as $k \to \infty$, the polytopes $(1/k)\Delta'_k$ (respectively $(1/k)\Delta_k$)
converge to the body $\Delta(S')$ (respectively $\Delta(S)$) with respect to the Hausdorff metric on subsets.
The claim follows easily from this.
\end{proof}

\section{Generalities on reductive group actions} \label{sec-preliminary}
\noindent{\bf Notation:} {Throughout the rest of the paper we will use the following notation: We denote by $G$ a connected reductive algebraic group over $\c$.
A Borel subgroup of $G$ is denoted by $B$ with $T$ and $U$ the maximal torus
and maximal unipotent subgroups contained in $B$ respectively.
The root system of $(G, T)$ is $R$ and $R^+$ denotes the subset of positive
roots for the choice of $B$. We denote by $\alpha_1, \ldots,
\alpha_r$ the corresponding simple roots where $r$ is the
semi-simple rank of $G$. The Weyl group of $(G,T)$ is denoted by $W$ with $w_0 \in W$
its longest element.
The weight lattice of $G$ (that is, the character group of $T$) is denoted by $\Lambda$, and $\Lambda^+$ is the subset
of dominant weights (for the choice of $B$). Put $\Lambda_\r = \Lambda \otimes_{\z} \r$.
Then the convex cone generated by $\Lambda^+$ in $\Lambda_\r$ is the
positive Weyl chamber $\Lambda^+_{\r}$. For a weight $\lambda \in
\Lambda^+$, the irreducible $G$-module corresponding to $\lambda$ will
be denoted by $V_\lambda$ and a highest weight vector in $V_\lambda$ will
be denoted by $v_\lambda$.}

\subsection{$G$-varieties and invariant subspaces of rational functions} \label{subsec-inv-subspace}
Let $X$ be an irreducible $G$-variety, that is $X$ is equipped with an algebraic action of $G$. We will denote the
action of $g \in G$ on $x \in X$ by $g \cdot x$. The group $G$ acts on $\c(X)$, the field of rational functions on $X$,
by $(g \cdot f)(x) = f(g^{-1} \cdot x)$, for $g \in G$ and $f \in \c(X)$.
We refer to this as the {\it natural action of $G$ on $\c(X)$}.
The above action of $G$ on $\c(X)$ restricts to an
action of $G$ on the ring of regular functions $\mathcal{O}(X)$.
With this action, the ring $\mathcal{O}(X)$ is a {\it rational $G$-module},
that is every $f \in \mathcal{O}(X)$ lies in a finite dimensional $G$-submodule.
In general the field $\c(X)$ is not a rational $G$-module.

Let $X$ be any irreducible $G$-variety (not necessarily projective) with the
field of rational functions $\c(X)$. Let $\varphi: G \to \c(X)^* := \c(X) \setminus \{0\}$
be a map satisfying the following condition: for all $g_1, g_2 \in G$ we have
$$\varphi_{g_1g_2} = (g_1 \cdot \varphi_{g_2})\varphi_{g_1}.$$ Moreover, assume that
for every $x \in X$, $g \mapsto \varphi_g(x)$ gives a rational function on $G$.
Such a function is called a {\it group cocycle}. The cocycles form a group under multiplication.
Given a group cocycle as above one defines an {\it action of
$G$ on $\c(X)$ twisted by the cocycle $\varphi$}: Let $f \in \c(X)$ and $g \in G$, define the
$\varphi$-twisted action $g *_\varphi f$ by:
$$g *_\varphi f = (g \cdot f) \varphi_g,$$ where in the right-hand side we have
the multiplication of rational functions $g \cdot f$ and $\varphi_g$.

\begin{Ex} \label{ex-G-line-bundle-cocycle}
Let $\L$ be a $G$-linearized line bundle on a normal $G$-variety $X$.
Fix a nonzero section $\tau \in H^0(X, \L)$ with divisor $D$ {(throughout the paper we work only with
line bundles which have nonzero global sections, although this assumption is not crucial and the statements can be slightly modified to hold in a more general setting).} One verifies that $\varphi: G \to \c(X)^*$
defined by $\varphi_g = (g \cdot \tau) / \tau$ is a cocycle. Let $L(D) = \{ f \in \c(X) \mid (f) + D > 0 \}$ be the subspace associated to the divisor $D$. Under the identification $L(D) \cong H^0(X, L)$ given by $f \mapsto f \tau$,
the $\varphi$-twisted action of $G$ on $L(D) \subset \c(X)$ corresponds to the action of $G$ on $H^0(X, \L)$.
\end{Ex}

Let $Z(G, X)$ denote the group of all cocycles $\varphi: G \to \c(X)^*$ and let
$B(G, X)$ denote the subgroup of {\it coboundaries}, i.e.
cocycles of the form $\varphi_g = g \cdot h/ h$ for some function $h \in \c(X)^*$.
The quotient $Z(X, G)/B(X, G)$ is usually called the {\it first group cohomology of
$G$ with coefficients in $\c(X)^*$} and denoted by $H^1(G, \c(X)^*)$.

\begin{Def}
Let $L \subset \c(X)$ be a finite dimensional subspace of rational functions.
If $L$ is stable under the action of $G$ twisted by a cocycle $\varphi$, we call
$L$ a {\it $\varphi$-invariant subspace}. We denote the collection of
all the pairs $(L, \varphi)$, where $L$ is a finite dimensional
$\varphi$-invariant subspace, by $\K$. By abuse of terminology, we call a pair
$(L, \varphi) \in \K$ an {\it invariant subspace}.
\end{Def}


If $L$ is a $\varphi$-invariant subspace and $h \in \c(X)^*$
then {the subspace $h^{-1}L$ is $\varphi'$-invariant} where $\varphi' = (g\cdot h/ h) \varphi$.

For two subspaces $L_1$, $L_2$ of rational functions, let $L_1L_2$ denote the subspace
spanned by all the products $f_1f_2$ where $f_1 \in L_1$ and $f_2 \in L_2$.
The following is straightforward:
\begin{Prop}
Let $(L_1, \varphi_1), (L_2, \varphi_2) \in \K$, then $(L_1L_2, \varphi_1 \varphi_2) \in \K$.
That is, $\K$ is a semigroup with respect to this multiplication of pairs.
\end{Prop}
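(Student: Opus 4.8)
The plan is to verify directly that the pair $(L_1L_2, \varphi_1\varphi_2)$ satisfies the two defining conditions of the set $\K$: namely that $\varphi_1\varphi_2$ is again a group cocycle on $X$, and that the finite dimensional subspace $L_1L_2$ is stable under the $G$-action twisted by $\varphi_1\varphi_2$. The first point is essentially bookkeeping: since cocycles form a group under multiplication (as noted right after the definition of group cocycle), $\varphi_1\varphi_2 \in Z(G,X)$; one should briefly recall that the cocycle identity $(\varphi_1\varphi_2)_{g_1g_2} = (g_1 \cdot (\varphi_1\varphi_2)_{g_2})(\varphi_1\varphi_2)_{g_1}$ follows from multiplying the identities for $\varphi_1$ and $\varphi_2$ and using that $g_1 \cdot$ is a field automorphism, and that $g \mapsto \varphi_{1,g}(x)\varphi_{2,g}(x)$ is rational on $G$ since a product of rational functions is rational. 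It is also worth noting $L_1L_2$ is finite dimensional, being spanned by the finitely many products $f_1 f_2$ with $f_1$, $f_2$ ranging over bases of $L_1$, $L_2$.

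Next I would check stability. The key computational fact is that the twisted action is multiplicative in the following sense: for $f_1 \in L_1$ and $f_2 \in L_2$,
\[
(g *_{\varphi_1\varphi_2} (f_1 f_2)) = (g \cdot (f_1 f_2))\,\varphi_{1,g}\varphi_{2,g} = \bigl((g\cdot f_1)\varphi_{1,g}\bigr)\bigl((g\cdot f_2)\varphi_{2,g}\bigr) = (g *_{\varphi_1} f_1)(g *_{\varphi_2} f_2),
\]
where I have used that the natural action $g\cdot$ is a ring homomorphism on $\c(X)$, so $g\cdot(f_1f_2) = (g\cdot f_1)(g\cdot f_2)$. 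Since $L_1$ is $\varphi_1$-invariant, $g *_{\varphi_1} f_1 \in L_1$, and similarly $g *_{\varphi_2} f_2 \in L_2$, so the product lies in $L_1L_2$. Because $L_1L_2$ is spanned by such products and the twisted action is linear in $f$ for fixed $g$, this shows $g *_{\varphi_1\varphi_2} L_1L_2 \subseteq L_1L_2$ for every $g$, hence (applying the same to $g^{-1}$) equality, i.e. $L_1L_2$ is $\varphi_1\varphi_2$-invariant.

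Finally, I would note that associativity and commutativity of this multiplication on $\K$ follow from the corresponding properties of multiplication of subspaces of $\c(X)$ and of multiplication of cocycles (which is just pointwise multiplication of $\c(X)^*$-valued functions), so $\K$ is indeed a commutative semigroup. There is essentially no main obstacle here — the statement is flagged as "straightforward" in the text — and the only mild subtlety is keeping track of the twisting: one must remember that the twisted action does not just permute the functions but multiplies by the cocycle, and the reason everything works is precisely that the natural (untwisted) $G$-action respects products while the cocycle contributions multiply together compatibly. I would present the one-line displayed computation above as the heart of the argument and leave the rest as remarks.
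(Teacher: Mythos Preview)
Your proposal is correct and complete: you verify that $\varphi_1\varphi_2$ is a cocycle, that $L_1L_2$ is finite dimensional, and that the displayed identity $g *_{\varphi_1\varphi_2}(f_1f_2) = (g*_{\varphi_1}f_1)(g*_{\varphi_2}f_2)$ gives invariance of $L_1L_2$. The paper itself offers no proof at all, simply labeling the proposition as ``straightforward,'' so your write-up is exactly the kind of routine check the authors had in mind and omitted.
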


To a finite dimensional subspace $L$ of rational functions one can assign its Kodaira map
$\Phi_L$ which is a rational map from $X$ to $\p(L^*)$, the projectivization of the dual space $L^*$:
Let $x \in X$ be such that $f(x)$ is defined for all $f \in L$. Then $\Phi(x)$ is represented by the linear functional
in $L^*$ which sends $f$ to $f(x)$. 

Now let $(L, \varphi) \in \K$ be an invariant subspace and $\Phi_L: X \ratmap \p(L^*)$ its Kodaira map.
The $\varphi$-twisted action of $G$ on $L$ induces an action of $G$ on $L^*$ and hence
on $\p(L^*)$. We have the following:
\begin{Prop} \label{prop-Kodaira-map}
The Kodaira map $\Phi_L$ is $G$-equivariant.
\end{Prop}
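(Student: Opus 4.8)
The plan is to unwind the definitions and check equivariance pointwise. Recall that for $(L,\varphi)\in\K$, the $\varphi$-twisted action on $L$ is $g\ast_\varphi f=(g\cdot f)\varphi_g$, and since $L$ is $\varphi$-invariant this lands back in $L$; dualizing gives an action on $L^*$ and hence on $\p(L^*)$. The Kodaira map $\Phi_L$ sends a point $x$ (at which every $f\in L$ is defined) to the functional $\mathrm{ev}_x:f\mapsto f(x)$, viewed projectively. So the statement to prove is that $\Phi_L(g\cdot x)=g\cdot\Phi_L(x)$ for $x$ in the domain of definition, where the right-hand side uses the induced action on $\p(L^*)$.

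First I would fix the convention for the induced action on $L^*$: for a linear action $f\mapsto g\ast_\varphi f$ on $L$, the contragredient action on $\xi\in L^*$ is $(g\cdot\xi)(f)=\xi(g^{-1}\ast_\varphi f)$. Then I would compute, for $x$ in the domain of $\Phi_L$ and $f\in L$,
\begin{equation*}
(g\cdot \mathrm{ev}_x)(f)=\mathrm{ev}_x(g^{-1}\ast_\varphi f)=\bigl((g^{-1}\cdot f)\varphi_{g^{-1}}\bigr)(x)=f(g\cdot x)\,\varphi_{g^{-1}}(x),
\end{equation*}
using the natural action $(g^{-1}\cdot f)(x)=f(g\cdot x)$. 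On the other hand $\mathrm{ev}_{g\cdot x}(f)=f(g\cdot x)$. Hence $g\cdot\mathrm{ev}_x=\varphi_{g^{-1}}(x)\cdot\mathrm{ev}_{g\cdot x}$ as elements of $L^*$: the two differ by the nonzero scalar $\varphi_{g^{-1}}(x)$, so they represent the same point of $\p(L^*)$. This is exactly $g\cdot\Phi_L(x)=\Phi_L(g\cdot x)$.

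I would then address the one genuine subtlety, which is the domains of definition. The map $\Phi_L$ is only a rational map, defined on the open set $X_L$ where all $f\in L$ are regular and not all vanish; I must check that $g\cdot x$ lies in this locus when $x$ does, and that $\varphi_{g^{-1}}(x)$ is defined and nonzero there. Since $g\cdot f\in L$ whenever $f\in L$, regularity of every element of $L$ at $g\cdot x$ is equivalent to regularity of every element at $x$ up to the factor $\varphi_g$; more carefully, $f$ regular and nonvanishing "enough" at $x$ together with $\varphi_{g^{-1}}$ being a rational function on $G$ for each fixed point — one uses that a cocycle is by hypothesis a rational function in $g$ for each $x$, and that $g$ acts as an automorphism of $X$ so carries the indeterminacy locus of $\Phi_L$ into that of $\Phi_L$ appropriately. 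The cleanest phrasing is: the equivariance identity holds on the dense open subset where both sides are defined, which is all that is meant by a morphism of varieties (rational map) being $G$-equivariant. I expect this bookkeeping about indeterminacy loci and about where $\varphi_{g^{-1}}$ is a well-defined nonzero scalar to be the only mildly delicate point; the algebraic core is the one-line computation above.
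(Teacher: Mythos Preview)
The paper states this proposition without proof, so there is no argument to compare against. Your computation is the standard verification: using the contragredient action on $L^*$ you show $g\cdot\mathrm{ev}_x$ and $\mathrm{ev}_{g\cdot x}$ differ by the nonzero scalar $\varphi_{g^{-1}}(x)$, hence agree in $\mathbb{P}(L^*)$. That is correct and is exactly what the authors would have in mind. Your remarks on the indeterminacy locus are adequate for a rational map; the equivariance identity on the dense open set where both sides are defined is all that is required.
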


Let $L_1, \ldots, L_n \subset \c(X)$ be nonzero finite dimensional subspaces of rational functions.
The {\it intersection index} $[L_1,\dots, L_n]$ is the number of solutions in $
X$ of a generic system of equations $f_1=\dots=f_n=0$, where
$f_1\in L_1,\dots, f_n\in L_n$. In counting the solutions, we
neglect the solutions $x$ at which all the functions in some space $L_i$
vanish as well as the solutions at which at least one function from some
space $L_i$ has a pole.
In \cite{Askold-Kiumars-MMJ} it is proven that the intersection index of subspaces is well-defined and is
{multi-additive} with respect to the product of subspaces. We view the intersection index of subspaces of rational
functions as an extension of the intersection theory of (Cartier) divisors (more generally linear systems).

Let $Y_L$ denote the closure of the image of the Kodaira map $\Phi_L$. From definition one sees that
if $\dim(Y_L) < n$, then the self-intersection index $[L, \ldots, L]$ is equal to zero. On the other hand, if
$\dim(Y_L) = n$ then the self-intersection index $[L, \ldots, L]$ is equal to the mapping degree of $\Phi_{L}$ times the degree of
$Y_L$ as a subvariety of the projective space $\p(L^*)$.

\begin{Def} \label{def-A_L}
To an invariant subspace $(L, \varphi)$ we associate the graded algebra
$$A_L = \bigoplus_{k \geq 0} L^k.$$ The group $G$ acts on $A_L$ by acting on each $L^k$ via $\varphi^k$.
One verifies that $A_L$ is isomorphic, as a graded $G$-algebra, to
the homogeneous coordinate ring of the projective variety $Y_L$.
\end{Def}


\subsection{Graded $G$-algebras} \label{subsec-G-alg}
An algebra $A$ is called a {\it $G$-algebra} if $G$ acts on $A$ respecting the algebra operations.
If $A$ is graded we require that {the action of $G$ respects} the grading.
Let $X$ be a $G$-variety with the field of rational functions $F = \c(X)$.
We will deal with graded algebras $A = \bigoplus_{k \geq 0} L_k$, where for each $k$, the $k$-th
homogeneous component $L_k$ is a subspace of $F$ {(one can regard
such $A$ as a graded subalgebra of the ring of polynomials $F[t]$).}

\begin{Def}
Let $A = \bigoplus_{k \geq 0} L_k$, $L_k \subset F$ be a graded algebra. 
We say $A$ is a {\it graded $G$-algebra with $G$-action twisted by $\varphi$},
if for each $k > 0$, the subspace $L_k$ is $\varphi^k$-invariant.
Moreover, we let ${\bf A}_G(X)$ denote the collection of all the pairs
$(A, \varphi)$ satisfying the following conditions:
\begin{itemize}
\item[(1)] $A$ is a graded $G$-algebra with the action twisted by $\varphi$.
\item[(2)] There is an invariant subspace $(L, \varphi) \in {\bf K}_G(X)$ such that
$A \subset A_L$. In particular, for every $k\geq 0$ we have $\dim L_k < \infty$.
\item[(3)] For large $k$, $L_k \neq \{0\}$.
\end{itemize}
By abuse of terminology we call a pair $(A, \varphi) \in {\bf A}_G(X)$ a {\it graded algebra of almost
$G$-integral type}. {Assumption (3) is not crucial} and is made to make the statements in
the rest of paper simpler. One can slightly modify the statements so that they hold without this assumption.
\end{Def}


\begin{Ex} \label{ex-ring-sections-G-alg}
1)($G$-algebra associated to an invariant subspace)
Let $(L, \varphi) \in {\bf K}_G(X)$ be an invariant subspace of rational functions on
$X$. Then $(A_L, \varphi) \in {\bf A}_G(X)$.\\

\noindent 2)(Integral closure) Let $(A, \varphi) \in {\bf A}_G(X)$ be a graded
algebra of almost $G$-integral type. Let $\overline{A}$ denote the integral closure of
$A$ in $F(t)$, where we regard $A$ as a graded subalgebra of the polynomial ring $F[t]$.
One verifies that $(\overline{A}, \varphi) \in {\bf A}_G(X)$.
For a subspace $(L, \varphi) \in \K$, in Section \ref{sec-int-index-G}, the integral closure $\ol{A_L}$ is used
in computing the self-intersection index of $L$. The algebra $\ol{A_L}$ and its
connection with the so-called completion $\ol{L}$ of the subspace $L$ is discussed in detail in \cite[Section 6]{Askold-Kiumars-MMJ}. {This is related to the notion of a complete linear system.}\\


\noindent 3)(Algebra of sections of a $G$-line bundle) Let $X$ be a normal
projective irreducible $G$-variety and let $\L$ be a
$G$-linearized line bundle on $X$. To $\L$ one associates the {\it algebra of sections}:
$$R(\L) = \bigoplus_{k \geq 0} H^0(X, \L^{\otimes k}).$$
Fix a nonzero section $\tau \in H^0(X, \L)$
with divisor $D$. As in Example \ref{ex-G-line-bundle-cocycle}, one identifies the algebra $R(\L)$
with the algebra $$R(D) = \bigoplus_{k \geq 0} L(kD),$$ where
$L(kD) = \{ f \in \c(X) \mid (f) + kD > 0 \}.$ With abuse of terminology we also call $R(D)$ the algebra of
sections of $D$.
Let the cocycle $\varphi: G \to \c(X)^*$
be defined by $\varphi_g = (g \cdot \tau) / \tau$.
Then the action of $G$ on $R(\L)$ corresponds to the $\varphi$-twisted action of $G$ on $R(D)$.
One also shows that $(R(D), \varphi)$ is a graded algebra of almost $G$-integral type, i.e.
$(R(D), \varphi) \in {\bf A}_G(X)$.
Suppose $\L$ is very ample and let $L = L(D)$. Then one can identify
the algebra of sections $R(D)$ and the integral closure $\ol{A_L}$ of the homogeneous coordinate ring $A_L$
(see \cite[Chap. 2, Proof of Theorem 5.19]{Hartshorne}).
\\

\noindent 4)(Componentwise product of algebras)
Let $A' = \bigoplus_{k} L'_k$, $A'' = \bigoplus_k L''_k$ be two graded algebars with $L'_k, L''_k \subset F$.
One defines the {\it componentwise product} $A'A''$ to be the graded algebra $A'A'' = \bigoplus_{k} L'_kL''_k.$
Now let $(A', \varphi')$, $(A'', \varphi'') \in {\bf A}_G(X)$.
It is easy to see that  $(A'A'', \varphi'\varphi'') \in {\bf A}_G(X)$, i.e. the componentwise product $A'A''$ is
a $G$-algebra with the action twisted by $\varphi'\varphi''$ and is of almost $G$-integral type.
\end{Ex}

\section{Moment convex body of a $G$-algebra} \label{sec-moment-body}
\subsection{Semigroup of highest weights and moment convex body} \label{subsec-semigp-S}
In this section we discuss the semigroup of highest weights and moment convex body for a graded $G$-algebra $(A, \varphi) \in {\bf A}_G(X)$, where as before $X$ is an irreducible $G$-variety.

One associates a lattice and a semigroup of weights to $X$ which measure
the highest weights that can appear in the $G$-modules consisting of
functions on $X$.

\begin{Def}
For any variety $X$ let $\Lambda(X) \subset \Lambda$ denote the lattice of weights of $B$-eigenfunctions
of $\c(X)$ for the natural action of $G$. It is called the {\it weight lattice of $X$}.
Also if $X$ is quasi-affine let $\Lambda^+(X)$ denote the semigroup of
weights of $B$-eigenfunctions in the algebra of regular functions $\mathcal{O}(X)$.
It is called the {\it weight semigroup of $X$}.
The rank of the lattice $\Lambda(X)$ is denoted by $r(X)$ and is called the {\it rank of the $G$-variety $X$}.
The linear span of $\Lambda(X)$ will be denoted by $\Lambda_\r(X)$.
\end{Def}


Now let $A = \bigoplus_{k \geq 0} L_k$ be a graded $G$-algebra. Let us write $L_k$ as the sum of its isotypic components $$L_k = \bigoplus_{\lambda \in \Lambda^+} L_{k,\lambda},$$ where
$L_{k, \lambda}$ is the sum of all the copies of the irreducible representation $V_\lambda$ in $L_k$.
The following statement about the product of isotypic components is well-known:

\begin{Th}[Multiplication of isotypic components] \label{th-product-isotypic}
Let $\lambda$, $\mu$ be dominant weights and $k, \ell >0$.
If $L_{k, \lambda}$ and $L_{\ell, \mu}$ are nonzero then $V_{\lambda+\mu}$ appears in
$L_{k,\lambda}L_{\ell,\mu}$ with nonzero multiplicity. Moreover,
if $V_{\nu}$ appears in $L_{k, \lambda} L_{\ell, \mu}$ with nonzero multiplicity
then $\nu$ lies in the convex hull of the $W$-orbit of $\lambda+\mu$
intersected with the positive Weyl chamber $\Lambda^+_\r$.
\end{Th}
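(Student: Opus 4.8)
The plan is to prove the two assertions of Theorem~\ref{th-product-isotypic} separately, both via the Borel--Weil type description of isotypic components through highest weight vectors and the classical theory of $U$-invariants.

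\textbf{The first assertion.} Pick a nonzero highest weight vector $f \in L_{k,\lambda}^U$ and a nonzero highest weight vector $g \in L_{\ell,\mu}^U$; these exist because $L_{k,\lambda}, L_{\ell,\mu} \neq 0$. Then $fg$ lies in the product space $L_{k,\lambda}L_{\ell,\mu}$. I claim $fg$ is again a highest weight vector of weight $\lambda+\mu$. Indeed, $fg$ is a $T$-eigenfunction of weight $\lambda+\mu$ since $f,g$ are $T$-eigenfunctions; and $fg$ is $U$-invariant because, under the twisted action, the space $L_{k,\lambda}L_{\ell,\mu}$ is $\varphi^{k}\varphi^{\ell}=\varphi^{k+\ell}$-invariant and the twisted action is by algebra automorphisms, so $u *_{\varphi^{k+\ell}} (fg) = (u *_{\varphi^k} f)(u *_{\varphi^\ell} g) = fg$ for $u \in U$. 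The only subtlety is that the product of two nonzero rational functions is nonzero (we are in a field), so $fg \neq 0$. A $U$-invariant $T$-eigenvector of weight $\lambda+\mu$ inside a rational $G$-module generates a copy of $V_{\lambda+\mu}$, so $V_{\lambda+\mu}$ appears in $L_{k,\lambda}L_{\ell,\mu}$ with positive multiplicity. I would phrase this cleanly in terms of the fact that for a rational $G$-module $M$, the multiplicity of $V_\nu$ in $M$ equals $\dim M^U_\nu$, the dimension of the $\nu$-weight space of $U$-invariants.

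\textbf{The second assertion.} This is the statement that any weight $\nu$ of an irreducible constituent $V_\nu$ of $L_{k,\lambda}L_{\ell,\mu}$ lies in $\conv(W\cdot(\lambda+\mu)) \cap \Lambda^+_\r$. Since $V_\nu$ is a submodule of the $G$-module $N := L_{k,\lambda}L_{\ell,\mu}$, its highest weight $\nu$ is in particular a $T$-weight of $N$. Now $N$ is spanned by products $f\cdot g$ with $f$ a $T$-weight vector of $L_{k,\lambda}$ and $g$ a $T$-weight vector of $L_{\ell,\mu}$; the weights of $L_{k,\lambda}$ are contained in $\conv(W\cdot\lambda)$ and those of $L_{\ell,\mu}$ in $\conv(W\cdot\mu)$ (this is the standard fact that the weights of $V_\lambda$ lie in the convex hull of the Weyl orbit of $\lambda$). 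Hence every $T$-weight of $N$ lies in $\conv(W\cdot\lambda) + \conv(W\cdot\mu)$. It remains to check the set-theoretic inclusion $\bigl(\conv(W\cdot\lambda)+\conv(W\cdot\mu)\bigr) \cap \Lambda^+_\r \subseteq \conv(W\cdot(\lambda+\mu))$, combined with the observation that $\nu$, being dominant, lies in $\Lambda^+_\r$. The inclusion $(\conv(W\lambda)+\conv(W\mu))\cap \Lambda^+_\r \subseteq \conv(W(\lambda+\mu))$ is a purely combinatorial statement about the root system: for dominant $\lambda,\mu$ one has $\conv(W\lambda) = \{\xi : \xi \preceq \lambda\}$-type descriptions, and one deduces that a dominant $\xi$ with $\xi = \xi_1+\xi_2$, $\xi_i \in \conv(W\cdot(\text{dominant rep}))$, satisfies $\lambda+\mu-\xi$ being a nonnegative combination of simple roots, hence $\xi \in \conv(W(\lambda+\mu))$.

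\textbf{Main obstacle.} The genuinely nontrivial ingredient is the last combinatorial inclusion $\bigl(\conv(W\cdot\lambda)+\conv(W\cdot\mu)\bigr)\cap\Lambda^+_\r \subseteq \conv(W\cdot(\lambda+\mu))$; everything else is bookkeeping with twisted actions and $U$-invariants. I would either cite this from standard Lie theory references (it is essentially the statement that the moment polytope of $V_\lambda \otimes V_\mu$ is $\conv(W(\lambda+\mu))$ up to lower order, or can be extracted from the description of weights via the dominance order and the fact that $\mathrm{conv}(W\nu)\cap\Lambda^+_\r = \{\xi\in\Lambda^+_\r : \nu-\xi \in \sum \r_{\geq 0}\alpha_i\}$), or prove it directly: given dominant $\xi = \xi_1+\xi_2$ with $\xi_i$ in the respective convex hulls, average over $W$ and use that for dominant $\eta$, $\eta' \in \conv(W\eta)$ iff $\eta - (\eta')^+$ lies in the root cone, to conclude $(\lambda+\mu) - \xi \in \sum_i \r_{\geq 0}\alpha_i$, which since $\xi$ is dominant gives $\xi \in \conv(W(\lambda+\mu))$. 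I should also take care that the ``appears with nonzero multiplicity'' in the hypothesis is correctly converted, via $\dim(\cdot)^U$, into a statement about highest weight vectors without circularity.
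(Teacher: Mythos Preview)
The paper does not supply a proof of this theorem; it is introduced with the phrase ``The following statement about the product of isotypic components is well-known'' and left without argument. Your proposal therefore fills in what the authors take for granted, and it does so correctly.

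A couple of remarks on the details. For the first assertion, your argument is the standard one and is complete: the key points are that the twisted action satisfies $u *_{\varphi^{k+\ell}}(fg) = (u *_{\varphi^k}f)(u *_{\varphi^\ell}g)$, and that $fg \neq 0$ because $\c(X)$ is a field. For the second assertion, the combinatorial inclusion you flag as the ``main obstacle'' is in fact immediate once one recalls two standard facts: (i) for dominant $\lambda$ and any $w \in W$ one has $\lambda - w\lambda \in Q^+_\r = \sum_i \r_{\geq 0}\alpha_i$, hence every point of $\conv(W\lambda)$ is $\preceq \lambda$ in the dominance order; and (ii) for dominant $\nu$, $\conv(W\nu)\cap\Lambda^+_\r = \{\xi\in\Lambda^+_\r : \nu - \xi \in Q^+_\r\}$. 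From (i), any $\xi_1+\xi_2$ with $\xi_i$ in the respective Weyl polytopes satisfies $(\lambda+\mu)-(\xi_1+\xi_2)\in Q^+_\r$; combined with (ii) this gives the inclusion directly, so no averaging over $W$ is needed.
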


\begin{Def}[Semigroup of highest weights and moment convex set]
\begin{enumerate}
\item Define the set \linebreak $S_G(A) \subset \z_{\geq 0} \times \Lambda^+$ by:
$$S_G(A) = \{(k, \lambda) \mid L_{k, \lambda} \neq \{0\}\}.$$
From Theorem \ref{th-product-isotypic}, $S_G(A)$ is
a semigroup (under addition). We call $S_G(A)$ the {\it semigroup of highest weights of $A$} or simply
{\it weight semigroup of $A$}. When there is no chance of confusion we will
drop the subscript $G$ and write $S(A)$ instead of $S_G(A)$.
\item We call the Newton-Okounkov convex set
of the semigroup $S(A)$
the {\it moment convex set} or simply the {\it moment set} of $A$
and denote it by $\Delta_G(A)$ or simply $\Delta(A)$.
\item We call the lattice $\Lambda(S(A))$ associated to the semigroup $S(A)$, the {\it
weight lattice of $A$} and denote it by $\Lambda(A)$. Also {the vector subspace} spanned by
$\Lambda(A)$ will be denoted by $\Lambda_\r(A)$.
\end{enumerate}
\end{Def}

\begin{Rem} \label{rem-S_T(A^U)}
Consider the subalgebra $A^U$ of $U$-invariants in $A$.
Since the action of $U$ respects the grading we have $A^U = \bigoplus_{k \geq 0}L_k^U$.
Because $T$ normalizes $U$, the algebra $A^U$ is stable under the action of $T$, i.e. is a $T$-algebra.
One can alternatively define $S(A)$ to be the semigroup of $T$-weights of the subalgebra
$A^U$, i.e. $S_T(A^U) = S_G(A)$.
\end{Rem}

\begin{Rem}[Connection with moment polytope in symplectic geometry] \label{rem-moment-polytope-symplectic}
{Let $K$ be a compact Lie group and let $X$ be a compact Hamiltonian $K$-manifold} with the moment map
$\mu: X \to \Lie(K)^*$. It is a well-known result due to F. Kirwan
that the intersection of the image of the moment map with the positive Weyl chamber is a
convex polytope usually called the {\it moment polytope} or {\it Kirwan polytope} of the
Hamiltonian $K$-space $X$.

Let $(L, \varphi) \in {\bf K}_G(X)$ be an invariant subspace of rational functions with the
Kodaira map $\Phi_L: X \to \p(L^*)$. Let $Y_L \subset \p(L^*)$ denote the
closure of the image of the Kodaira map $\Phi_L$. It is a closed irreducible $G$-invariant subvariety of
the projective space $\p(L^*)$ with homogeneous coordinate ring $A_L$. Let us assume that $Y_L$ is smooth.
Fix a $K$-invariant inner product on $L^*$ {where $K$ is a
maximal compact subgroup of $G$}. This induces a $K$-invariant symplectic structure on $\p(L^*)$ and
hence on $Y_L$.
\begin{enumerate}
\item With this symplectic structure, $Y_L$ is a Hamiltonian $K$-manifold.
\item {From the principle} {\it quantization commutes with reduction}
it follows that this convex polytope coincides with $\Delta(A_L)$ (see
\cite{G-S} and \cite{Brion2}). {More precisely, the Kirwan polytope identifies with $\Delta(A_L)$ after taking the involution $\lambda \mapsto \lambda^*$, where $\lambda^* = -w_0 \lambda$.}
\end{enumerate}
\end{Rem}

The following proposition shows that the weight lattices of all the graded $G$-algebras are in fact
contained in the weight lattice of $X$. We skip the proof.
\begin{Prop} \label{prop-Lambda(A)}
1) Let $(A, \varphi)$ be a graded $G$-algebra. Then the lattice of weights
$\Lambda(A)$ associated to $A$ is contained in the lattice of weights $\Lambda(X)$.
It follows that the moment body $\Delta(A)$ is {parallel to the subspace} $\Lambda_\r(X)$.
2) Suppose $L \in {\bf K}_G(X)$ is such that the Kodaira map $\Phi_L$ gives a birational
isomorphism between $X$ and its image. Then the weight lattice $\Lambda(A_L)$ coincides
with $\Lambda(X)$.
\end{Prop}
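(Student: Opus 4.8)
The plan is to reduce both statements to the description of $S(A)$ as the $T$-weight semigroup of $A^U$ (Remark \ref{rem-S_T(A^U)}), and then to identify the relevant weights with weights of $B$-eigenfunctions appearing after dividing by a suitable nonzero $B$-eigenfunction. For part (1): by definition $\Lambda(A)$ is generated by differences $\lambda-\mu$ where $(k,\lambda),(k,\mu)\in S(A)$ for a common level $k$. Pick nonzero $B$-eigenvectors $f_\lambda\in L_{k,\lambda}^U$ and $f_\mu\in L_{k,\mu}^U$; for the $\varphi^k$-twisted action these are $B$-eigenfunctions in $\c(X)$ with weights $\lambda$ and $\mu$ up to the contribution of $\varphi^k$, which is common to both. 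Hence $f_\lambda/f_\mu\in\c(X)^*$ is a $B$-eigenfunction for the \emph{natural} action of $G$ with weight $\lambda-\mu$, so $\lambda-\mu\in\Lambda(X)$. Since such differences generate $\Lambda(A)$, we get $\Lambda(A)\subseteq\Lambda(X)$. The statement that $\Delta(A)$ is parallel to $\Lambda_\r(X)$ then follows because $\Delta(A)$ is a slice of the cone $C(S(A))$, and $S(A)$ spans (over $\r$) a translate of $L(S(A))=\Lambda_\r(A)\subseteq\Lambda_\r(X)$ inside level $1$; any two points of $\Delta(A)$ differ by an element of $\Lambda_\r(A)$.

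For part (2): the inclusion $\Lambda(A_L)\subseteq\Lambda(X)$ is part (1). For the reverse inclusion, take any $B$-eigenfunction $h\in\c(X)^*$ with weight $\chi\in\Lambda(X)$ for the natural action. Since $\Phi_L$ is birational onto its image, the function field of $Y_L$ is $\c(X)$, and $\c(X)$ is generated as a field by ratios $f/g$ with $f,g\in L$ (or, passing to $A_L=\bigoplus L^k$, by ratios of elements of $L^k$ of equal degree). The idea is: first reduce to showing that every weight $\chi$ of a $B$-eigenfunction of $\c(X)$ already arises from a ratio of two $B$-eigenvectors of $L^k$ for some $k$ — equivalently, $\chi\in G(S(A_L))\cap\pi^{-1}(0)=\Lambda(A_L)$. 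Concretely, writing $h$ as a quotient of two elements of $L^k$ (possible for $k\gg0$ since $L^k$ separates points birationally and $\c(X)=\operatorname{Frac}(A_L^U)$ after taking $U$-invariants — here use that $A_L^U$ is the homogeneous coordinate ring of $Y_L/\!/U$ and has the same fraction field structure over $\c(Y_L)^U=\c(X)^U$), one decomposes numerator and denominator into $B$-isotypic, then $B$-eigen, components; comparing $B$-eigenfunction weights and using Theorem \ref{th-product-isotypic} to control which weights occur, one extracts $(k,\lambda),(k,\mu)\in S(A_L)$ with $\lambda-\mu=\chi$.

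The main obstacle is the reverse inclusion in part (2): passing from ``$\c(X)$ is generated by ratios of elements of $L^k$'' to ``every $B$-eigenfunction weight is realized by a ratio of $B$-\emph{eigenvectors} in a single $L^k$.'' Naively a ratio of two $B$-eigenvectors need not be a $B$-eigenfunction of the prescribed weight after the isotypic decomposition of an arbitrary ratio is performed; one must argue that the $U$-invariant part $A_L^U$, which is a domain with fraction field containing $\c(X)^U$ and on which $T$ acts with weights exactly $\Lambda^+(Y_L\text{-cone})$, already has weight group all of $\Lambda(X)$. This is essentially the statement that for a $G$-variety the weight lattice can be computed from \emph{any} birational projective model together with a $G$-linearized very ample system — a standard but slightly technical fact about the behaviour of $B$-eigenfunctions under birational $G$-maps, which is why the authors say they skip the proof. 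I would invoke precisely that: $B$-eigenfunctions of $\c(X)$ coincide with $B$-eigenfunctions of $\c(Y_L)$ since $\Phi_L$ is $G$-equivariant and birational, and each such eigenfunction, being a ratio of sections of $\L^{\otimes k}$ (i.e.\ of elements of $L^k$) for $k\gg0$, decomposes so that its weight lands in the group generated by $T$-weights of $(L^k)^U$, giving $\chi\in\Lambda(A_L)$.
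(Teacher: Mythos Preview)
The paper explicitly writes ``We skip the proof'' after this proposition, so there is no argument in the paper to compare against. I therefore evaluate your proposal on its own merits.

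Your argument for part (1) is correct and clean. The key observation---that for two $B$-eigenvectors $f_\lambda,f_\mu\in L_k^U$ of the $\varphi^k$-twisted action, the cocycle factor $\varphi_b^k$ cancels in the ratio $f_\lambda/f_\mu$, leaving a $B$-eigenfunction for the \emph{natural} action with weight $\lambda-\mu$---is exactly the right computation, and immediately gives $\Lambda(A)\subset\Lambda(X)$.

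For part (2) your outline is in the right direction, but the step you flag as the ``main obstacle'' is genuinely left open: decomposing an arbitrary numerator and denominator into isotypic pieces does not by itself produce a \emph{single} pair of $B$-eigenvectors whose ratio is $h$. Here is a direct way to close the gap. Given a $B$-eigenfunction $h\in\c(X)^*$ of weight $\chi$, use birationality of $\Phi_L$ to write $h=f/g$ with $f,g\in L^k$ for some $k$. Consider the transporter subspace
\[
V=\{\,p\in L^k \mid hp\in L^k\,\}.
\]
This is nonzero (it contains $g$) and is $B$-stable for the twisted action: if $p\in V$, then $b*_{\varphi^k}(hp)=\chi(b)\,h\,(b*_{\varphi^k}p)$, so $h\,(b*_{\varphi^k}p)\in L^k$. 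By Lie--Kolchin, $V$ contains a $B$-eigenvector $p_0$, say of weight $\mu$; then $hp_0\in L^k$ is a $B$-eigenvector of weight $\chi+\mu$. Hence $(k,\mu)$ and $(k,\chi+\mu)$ both lie in $S(A_L)$, and $\chi=(\chi+\mu)-\mu\in\Lambda(A_L)$. This replaces the vague ``decompose and compare'' step and does not require Theorem~\ref{th-product-isotypic} or any statement about $\operatorname{Frac}(A_L^U)$.
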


The following is well-known (see \cite{Popov}):
\begin{Th} \label{th-S-finitely-gen}
1) If $A$ is a finitely generated $G$-algebra then $A^U$ is also finitely generated.
2) If $A$ is a finitely generated graded $G$-algebra then the semigroup $S(A)$ is a finitely
generated semigroup.
\end{Th}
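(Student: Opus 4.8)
The plan is to reduce both statements to classical finiteness results for reductive group actions, the cleanest route being through the algebra of $U$-invariants and the Hilbert--Mumford/Hochster--Roberts circle of ideas (as in Popov and Grosshans). For part (1), I would argue as follows. Since $A$ is a finitely generated $G$-algebra, it is in particular a finitely generated commutative $\c$-algebra on which the reductive group $G$ acts rationally (every element lies in a finite-dimensional submodule, because $A\subset A_L$ for some $(L,\varphi)\in{\bf K}_G(X)$ and each $L^k$ is finite-dimensional). The key input is that $U$, though not reductive, is a \emph{Grosshans subgroup} of $G$: the homogeneous space $G/U$ is quasi-affine and $\c[G/U]=\c[G]^U$ is finitely generated. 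By Grosshans' theorem, for any finitely generated $G$-algebra $A$ the subalgebra of $U$-invariants $A^U=(A\otimes_\c \c[G/U])^G$ is finitely generated; here one uses that $A\otimes_\c\c[G/U]$ is a finitely generated $G$-algebra and $G$ is reductive, so the ring of $G$-invariants is finitely generated by the Hilbert--Mumford finiteness theorem. This gives (1).

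For part (2), I would pass from $A$ to $A^U$. By part (1), $A^U$ is finitely generated, and by Remark~\ref{rem-S_T(A^U)} it is a graded $T$-algebra with $S_G(A)=S_T(A^U)$: the semigroup of highest weights of $A$ is exactly the semigroup of $T$-weights occurring in the graded pieces $L_k^U$. So it suffices to show that if $B=\bigoplus_{k\ge 0} M_k$ is a finitely generated graded algebra with an action of a torus $T$ respecting the grading (with each $M_k$ finite-dimensional), then the set $\{(k,\chi)\mid \chi\text{ is a }T\text{-weight appearing in }M_k\}\subset \z_{\ge 0}\times\Lambda$ is a finitely generated semigroup. Choose homogeneous generators $b_1,\dots,b_N$ of $B$; decomposing each into $T$-eigenvectors (possible since $T$ is linearly reductive and preserves the grading) we may assume each $b_i$ is itself a $T$-eigenvector of some weight $\chi_i$ in degree $d_i>0$. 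Then every monomial $b_1^{a_1}\cdots b_N^{a_N}$ is a $T$-eigenvector of weight $\sum a_i\chi_i$ in degree $\sum a_i d_i$, and these monomials span $B$. Hence the weight $(k,\chi)$ appears in $M_k$ precisely when $(k,\chi)=\sum a_i(d_i,\chi_i)$ for some nonnegative integers $a_i$ — note that monomials of the \emph{same} bidegree $(k,\chi)$ cannot cancel, since they lie in the same $T$-isotypic, graded component. Therefore $S_T(B)$ is exactly the subsemigroup of $\z_{\ge0}\times\Lambda$ generated by the finitely many elements $(d_1,\chi_1),\dots,(d_N,\chi_N)$, which is finitely generated. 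Applying this with $B=A^U$ and using $S_G(A)=S_T(A^U)$ completes (2).

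I would present (2) first in this self-contained monomial form and then only invoke (1) to know that $A^U$ is finitely generated, so that the two parts fit together transparently. The one place that genuinely requires the structure theory of reductive groups — and which I expect to be the main obstacle, or at least the only non-elementary step — is part (1): that $A^U$ is finitely generated. This is not formal, since $U$ is not reductive and invariant rings of non-reductive group actions need not be finitely generated (Nagata's counterexamples); what rescues us is precisely that $U$ is the unipotent radical of a Borel, hence a Grosshans subgroup of the reductive $G$, so that $A^U$ can be rewritten as a genuine reductive-group invariant ring $(A\otimes\c[G/U])^G$. Once one grants Grosshans' theorem (equivalently, the finite generation of $\c[G]^U$, due to Hadžiev, Grosshans, Popov), the rest is the elementary semigroup argument above. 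Everything else — the grading being preserved, the finite-dimensionality of each $L_k^U$, the absence of cancellation within a fixed $(k,\chi)$-component — is routine bookkeeping which I would not spell out in detail.
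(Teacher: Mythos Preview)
The paper does not actually prove this theorem: it is stated as well-known with a reference to \cite{Popov}, and no argument is given. Your proof is correct and follows the standard route---part (1) via the transfer principle and Grosshans' theorem (finite generation of $\c[G]^U$, so that $A^U\cong(A\otimes\c[G/U])^G$ is the invariant ring of a reductive group acting on a finitely generated algebra), and part (2) by the elementary monomial argument after passing to the finitely generated $T$-algebra $A^U$ and invoking Remark~\ref{rem-S_T(A^U)}.

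One small comment on part (2): your claim that $S_T(B)$ \emph{equals} the semigroup generated by the $(d_i,\chi_i)$, rather than is merely contained in it, needs that each monomial $b_1^{a_1}\cdots b_N^{a_N}$ is nonzero, i.e.\ that $B$ is a domain. Your sentence about monomials of the same bidegree ``not cancelling'' addresses the forward inclusion, not this one; the issue is whether a single monomial can vanish in $B$. In the paper's setting every algebra in ${\bf A}_G(X)$ is a subalgebra of $F[t]$ and hence a domain, so $A^U$ is too and the point is moot---and indeed without some integrality hypothesis $S(A)$ need not even be closed under addition (take $A=\c[x,y]/(xy)$ with the obvious $\c^*$-action). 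Since the theorem is only applied in Corollary~\ref{cor-S(A)-strongly-non-negative} to algebras of this type, your argument suffices for the paper's purposes.
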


\begin{Cor} \label{cor-S(A)-strongly-non-negative}
1) If $A$ is a finitely generated $G$-algebra then the moment convex body $\Delta(A)$ is
in fact a polytope which we call the {\it moment polytope of $A$}. In particular
for any invariant subspace $(L, \varphi) \in {\bf K}_G(X)$, the moment convex set $\Delta(A_L)$ is a
polytope.
2) Let $(A, \varphi) \in {\bf A}_G(X)$, i.e. $A$ is an algebra of almost $G$-integral type. Then
$S(A)$ is a strongly non-negative semigroup. It follows that the moment convex set $\Delta(A)$ is a
convex body which we call the {\it moment convex body} or simply the {\it moment body of $A$}.
\end{Cor}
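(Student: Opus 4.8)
The plan is to prove part (2) first, since it contains the essential point --- that $S(A)$ is a strongly non-negative semigroup --- and then to obtain part (1) by combining this with finite generation via Theorem \ref{th-S-finitely-gen}. Throughout I use the identification $\Delta(A)=\Delta(S(A))$ and the correspondence between properties of a semigroup and properties of its Newton--Okounkov set recalled in Section \ref{subsec-semigroup}.

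For part (2), fix $(A,\varphi)\in{\bf A}_G(X)$ and, by condition (2) in the definition of ${\bf A}_G(X)$, an invariant subspace $(L,\varphi)\in\K$ with $A\subset A_L$, so that $L_k\subset L^k$ for every $k$ and hence every dominant weight occurring at level $k$ in $S(A)$ already occurs in the $G$-module $L^k$. Fix a $W$-invariant Euclidean norm $\|\cdot\|$ on $\Lambda_\r$; since $L$ is finite dimensional, the number $C=\max\{\|\mu\|\mid V_\mu\ \text{occurs in}\ L\}$ is finite. The key estimate is that $\|\lambda\|\le Ck$ whenever $(k,\lambda)\in S(A)$. I would prove this by induction on $k$, the base case $k=1$ being the definition of $C$, using the second assertion of Theorem \ref{th-product-isotypic} applied to the graded $G$-algebra $A_L$: from $L^k=L^{k-1}\cdot L$ one gets that any $V_\lambda$ occurring in $L^k$ already occurs in $(L^{k-1})_\nu\cdot(L)_\mu$ for some weights $\nu,\mu$ (isotypic components of $L^{k-1}$ and $L$ respectively), hence $\lambda\in\conv(W(\nu+\mu))$, so $\|\lambda\|\le\|\nu+\mu\|\le\|\nu\|+\|\mu\|\le(k-1)C+C$. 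It follows that $S(A)$ lies in the closed convex cone $\{(k,\lambda)\mid\|\lambda\|\le Ck\}$, which meets the hyperplane $\pi^{-1}(0)$ only at the origin; since $C(S(A))$ is the smallest closed convex cone containing $S(A)$, the same holds for $C(S(A))$. By condition (3), $S(A)$ is not contained in $\pi^{-1}(0)$, so $S(A)$ is strongly non-negative, and Definition \ref{def-Newton-Okounkov-convex-set} then gives that $\Delta(A)$ is compact, i.e.\ a convex body.

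For part (1), let $A$ be a finitely generated graded $G$-algebra with $L_0=\c$ (which holds automatically when $A\in{\bf A}_G(X)$). By Theorem \ref{th-S-finitely-gen}(2) the semigroup $S(A)$ is finitely generated, and its only point at level $0$ is $(0,0)$, because $V_\lambda\subset L_0=\c$ forces $\lambda=0$; thus $S(A)$ is generated by finitely many elements $(k_i,\lambda_i)$ with $k_i\ge 1$ together with possibly $(0,0)$. Then $C(S(A))=\sum_i\r_{\ge0}(k_i,\lambda_i)$ is a rational polyhedral cone, and a relation $\sum_it_i(k_i,\lambda_i)=(0,\lambda)$ with all $t_i\ge0$ forces $\sum_it_ik_i=0$, hence $t_i=0$ for every $i$ with $k_i\ge1$, and therefore $\lambda=0$; so $C(S(A))\cap\pi^{-1}(0)=\{0\}$. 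Intersecting the polyhedral cone $C(S(A))$ with the affine hyperplane $\pi^{-1}(1)$ now produces a bounded polyhedron, and its projection to $\Lambda_\r$ is $\Delta(A)$, which is therefore a polytope. The ``in particular'' clause is the special case $A=A_L$: this algebra is generated in degree $1$ by the finite-dimensional space $L$, hence is finitely generated.

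I expect the only genuine obstacle to be the weight estimate $\|\lambda\|\le Ck$ underlying strong non-negativity in part (2). It is precisely what excludes the pathology of Example \ref{ex-semigroup-Newton-Okounkov-set}(2), where the cone of the semigroup touches $\pi^{-1}(0)$ along a whole ray and $\Delta$ becomes unbounded, and it is the only place where the hypothesis $A\subset A_L$ is used rather than mere finite generation. Everything after that is elementary convex geometry together with the cited structural inputs (Theorems \ref{th-product-isotypic} and \ref{th-S-finitely-gen}).
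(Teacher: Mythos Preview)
Your proof is correct, but your route to part (2) is more laborious than the paper's. The paper proves the two parts in the opposite order: it first deduces (1) immediately from Theorem~\ref{th-S-finitely-gen}(2), and then gets (2) almost for free by observing that $S(A)\subset S(A_L)$, that $A_L$ is finitely generated, and that a finitely generated semigroup whose generators all lie at positive level is automatically strongly non-negative; any subsemigroup of a strongly non-negative semigroup inherits the property. So no weight estimate is needed at all---the inclusion $A\subset A_L$ already forces $C(S(A))\subset C(S(A_L))$, and the latter cone touches $\pi^{-1}(0)$ only at the origin by finite generation. Your inductive bound $\|\lambda\|\le Ck$ via Theorem~\ref{th-product-isotypic} is a valid and self-contained alternative, and it has the mild advantage of making the bound on $\Delta(A)$ quantitatively explicit (it sits in the ball of radius $C$), but it costs you an induction and an appeal to the structure of tensor-product decompositions that the paper avoids entirely. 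Your treatment of part (1) is essentially the same as the paper's, just with more detail spelled out.
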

\begin{proof}
1) Follows from Theorem \ref{th-S-finitely-gen}(2). 2) Let $(A, \varphi) \in {\bf A}_G(X)$ be an algebra
of almost $G$-integral type.
From definition there is an invariant subspace $(L, \varphi) \in {\bf K}_G(X)$ such that
$A \subset A_L$. By 1) we know that
$S(A_L)$ is finitely generated. As none of the generators lie in $\Lambda_\r \times \{0\}$, the semigroup
$S(A_L)$ is a strongly non-negative semigroup. Now since $S(A) \subset S(A_L)$ we conclude that $S(A)$ is
also a strongly non-negative semigroup.
\end{proof}

The following example concerns the situation in the Bernstein-Kushnirenko theorem on the number of
solutions of a system of Laurent polynomials in $(\c^*)^n$.
\begin{Ex}[$T$-invariant subspaces of Laurent polynomials] \label{ex-moment-body-toric}
Let $T = X = (\c^*)^n$ be the algebraic torus acting on itself by multiplication.
In $T$, the weight lattice $\Lambda$ and the semigroup of dominant weights $\Lambda^+$ both coincide with the lattice $\chi(T)$ of characters of $T$.
Fixing a set of coordinates $(x_1, \ldots, x_n)$ on $X$ we identify $\chi(T)$ with $\z^n$.
Let $I \subset \z^n$ be a finite set of characters. To $I$ we can associate a subspace $L(I)$ consisting of
all the Laurent polynomials with exponents in $I$. The subspace $L(I)$ is a $T$-invariant finite dimensional subspace of rational functions on $X$ for the natural action of $T$. The dimension of $L(I)$ is equal to $\#I$. Let $A = A_{L(I)}$ be the corresponding graded algebra. One sees that the semigroup of weights $S(A)$ is the semigroup in $\z_{\geq 0} \times \z^n$ generated by $\{1\} \times I$ and the moment convex body $\Delta(A)$
is the convex hull of the points in $I$.
\end{Ex}

From another point of view, the above example describes the moment polytope of an equivariant
very ample line bundle on a toric variety.

The next example concerns the generalized Pl\"{u}cker embedding of the flag variety into a projective space.
\begin{Ex}[Line bundles on flag variety] \label{ex-moment-body-G/B}
1) Let $X = G/B$ be the complete flag variety of $G$.
Let $\lambda \in \Lambda^+$ be a dominant weight and let $\L_\lambda$ denote the
$G$-linearized line bundle on $X$ associated to $\lambda$. {By the Borel-Weil-Bott theorem, for any integer $k>0$,
the $G$-module $H^0(X, \L_\lambda^{\otimes k})$ is isomorphic to $V_{k\lambda}^*$.}
Let $\tau_\lambda$ be a highest weight section in $H^0(X, \L_\lambda)$
with divisor $D_\lambda$. As in Example \ref{ex-ring-sections-G-alg}(3) consider the algebra:
$$A = R(D_\lambda) = \bigoplus_{k \geq 0} L(kD_\lambda) \cong \bigoplus_{\geq 0} V_{k\lambda}^*.$$
Let us regard $A$ as a graded $G$-algebra. For a dominant weight $\lambda$ let $\lambda^*$ be the dominant weight $-w_0\lambda$. Then one knows that $V_{k\lambda}^* \cong V_{k\lambda^*}$ (as $G$-modules) and hence the semigroup $S_G(A)$ is given by $$S_G(A) = \{ (k\lambda^*, k) \mid k \in \n \}.$$ Thus the moment convex body $\Delta_G(A)$
coincides with the single point $\{ \lambda^* \}$.\\

\noindent 2) On the other hand let us regard the algebra $A$ above as a $T$-algebra and
describe the moment body $\Delta_T(A)$. It is well-known that the convex hull of $T$-weights in
$V_\lambda$ coincides with the convex hull of the Weyl group orbit $\{w\lambda \mid w \in W\}$. It thus
follows that the moment convex body $\Delta_T(A)$ is the convex hull of the Weyl group orbit
$\{w\lambda^* \mid w \in W \}$.
\end{Ex}

The following is related to the situation addressed in \cite{Kazarnovskii}
generalizing the Bernstein-Kushnirenko theorem to representations of reductive groups.
(\cite{Kazarnovskii} uses a {symplectic geometry} approach.)
\begin{Ex}[Moment polytope of a representation]
{Let $\pi: H \to \GL(n, \c) \subset \textup{Mat}(n, \c)$ be a
finite dimensional representation of a connected reductive group $H$.
Let $\pi_{ij}: H \to \c$, $i,j=1, \ldots, n$ be the matrix elements, i.e. the
entries of $\pi$. Let $L$ be the subspace of regular functions on $H$ spanned by the
$\pi_{ij}$. Consider the action of $G = H \times H$ on $H$ by the multiplication
from left and right. The subspace $L$ is a $G$-invariant subspace.
Let $\Lambda_\r^+$ (respectively $W$) denote the positive Weyl chamber (respectively the Weyl group) of $H$.
As in \cite{Kazarnovskii}, one can show that the moment polytope of the $G$-algebra $A_L$
(which lives in $\Lambda_\r^+ \times \Lambda_\r^+$)
can be identified with the convex hull of the $W$-orbit of the highest weights of the representation $\pi$
intersected with the positive Weyl chamber $\Lambda_\r^+$.}
\end{Ex}

We now give lower and upper estimates for the moment polytope of an algebra $A_L$
associated to an invariant subspace $(L, \varphi) \in {\bf K}_G(X)$. Note that
$A_L$ is finitely generated and hence its moment body is a polytope
(Corollary \ref{cor-S(A)-strongly-non-negative}).
We define two polytopes which are lower and upper
estimates for the moment polytope $\Delta(A_L)$.
Let $I \subset \Lambda^+$ be the finite set of all dominant weights $\lambda$
where $V_\lambda$ appears in the $G$-module $L$ with nonzero multiplicity.
Let $P(I)$ be the convex hull of $I$. It is a convex polytope in $\Lambda^+_\r(X)$.
Also let $P_W(I)$ be the intersection of $\Lambda^+_\r(X)$ with the convex polytope
obtained by taking the convex hull of the union of $W$-orbits of all the $\lambda \in I$.

\begin{Prop} \label{prop-moment-lower-upper-bound}
With notation as above, we have
$$P(I) \subset \Delta(A_{L}) \subset P_W(I).$$
\end{Prop}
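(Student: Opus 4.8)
The plan is to prove the two inclusions separately. For the lower bound $P(I) \subseteq \Delta(A_L)$ I would argue directly from the definitions: for each $\lambda \in I$ the isotypic component of $V_\lambda$ in $L = L^1$ is nonzero, so $(1,\lambda) \in S(A_L)$ and hence the level-$1$ point $\lambda = \lambda/1$ lies in $\Delta(A_L)$; since $\Delta(A_L)$ is convex (in fact a polytope, by Corollary \ref{cor-S(A)-strongly-non-negative}) it then contains $\conv(I) = P(I)$.

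For the upper bound $\Delta(A_L) \subseteq P_W(I)$, write $Q = \conv\big(\bigcup_{\lambda \in I} W\lambda\big)$, so that $P_W(I) = Q \cap \Lambda^+_\r(X)$; note that $Q$ is $W$-invariant (it is the convex hull of a union of $W$-orbits) and $P_W(I)$ is closed and convex. By Definition \ref{def-Newton-Okounkov-convex-set}, $\Delta(A_L)$ is the closed convex hull of all points $\nu/k$ with $k>0$ and $V_\nu$ occurring in $L^k$, so, $P_W(I)$ being closed and convex, it suffices to show that every such $\nu/k$ lies in $P_W(I)$. Since $\nu$ is dominant, this reduces to proving $\nu \in kQ$, which I would do by induction on $k$. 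The case $k=1$ is the definition of $I$. For the inductive step I decompose $L^k = L^{k-1}\cdot L = \sum_{\mu, \lambda} L^{k-1}_\mu L^1_\lambda$; if $V_\nu$ occurs in $L^k$ it occurs in some summand $L^{k-1}_\mu L^1_\lambda$ with both factors nonzero, and Theorem \ref{th-product-isotypic} then gives $\nu \in \conv(W(\mu+\lambda))$. By the inductive hypothesis $\mu \in (k-1)Q$ and $\lambda \in I \subseteq Q$, so $\mu + \lambda \in (k-1)Q + Q = kQ$; as $kQ$ is convex and $W$-invariant, $\conv(W(\mu+\lambda)) \subseteq kQ$, hence $\nu \in kQ$, which closes the induction.

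I expect the proof to be essentially routine. The few points requiring attention are: the bookkeeping with the dilation factor $k$ throughout the induction; the use of the $W$-invariance of $kQ$, which is exactly what allows the Weyl-orbit convex hull furnished by Theorem \ref{th-product-isotypic} to be reabsorbed; and the observation that $P_W(I)$ is closed, so that passing to the closure in the definition of the Newton--Okounkov body causes no trouble. One could also avoid the induction by decomposing $L^{\otimes k} = \bigoplus_{(\lambda_1,\dots,\lambda_k) \in I^k} V_{\lambda_1}\otimes\cdots\otimes V_{\lambda_k}$ and noting that every weight of such a tensor product lies in $\conv(W\lambda_1) + \cdots + \conv(W\lambda_k) \subseteq kQ$; but invoking Theorem \ref{th-product-isotypic} keeps the argument inside the framework already developed.
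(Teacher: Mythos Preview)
Your proof is correct and follows essentially the same approach as the paper, which simply states that the proposition follows immediately from Theorem~\ref{th-product-isotypic}; you have supplied the details of that deduction (the lower bound from the level-$1$ generators and convexity, the upper bound via the induction on $k$ using the $W$-invariance of $kQ$).
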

\begin{proof}
Follows immediately from Theorem \ref{th-product-isotypic}.
\end{proof}

There are two well-known classes of $G$-varieties for which the
two extremes of $P(I)$ and $P_W(I)$ are obtained.
For the homogeneous coordinate ring of a so-called {\it horospherical $G$-variety} the moment polytope
is equal to $P(I)$. On the other hand, the upper estimate $P_W(I)$ is attained for {\it symmetric varieties}.
It turns out that in both cases the moment polytope is additive, in the sense which is explained in Section \ref{subsec-superadd-moment} (see \cite{Kiumars-note-spherical} for the case of symmetric varieties).

\subsection{Superadditivity of the moment convex body} \label{subsec-superadd-moment}
Let $X$ be an irreducible $G$-variety.
In this section we address the additivity of the mapping $A \mapsto \Delta(A)$ with respect to the
componentwise product of $G$-algebras in ${\bf A}_G(X)$.
We will show that in general this map is superadditive but in the case of a torus action $G=T$
is additive.

Let us start with the case of a torus action. Let $T$ be a torus and $X$ an irreducible $T$-variety.
\begin{Prop} \label{prop-T-moment-polytope-alg-additive}
1) Let $(A', \varphi'), (A'', \varphi'') \in {\bf A}_T(X)$ be two graded $T$-algebras. Then
$\Delta_T(A) = \Delta_T(A') + \Delta_T(A'')$, where
$(A, \varphi) = (A'A'', \varphi'\varphi'')$ is the
componentwise product of $A'$, $A''$, and the addition  in the right-hand side is the
Minkowski sum of convex bodies.
2) Let $(L, \varphi)$ be a $T$-invariant subspace of rational functions and let $I$ be the finite set
of $T$-weights of $L$. Then the moment convex body of the algebra $A_L$ coincides with the convex hull of
$I$. 3) The map $L \mapsto \Delta_T(A_L)$ is additive with
respect to the multiplication of subspaces. That is, if $L_1, L_2$
are two $T$-invariant subspaces then $$\Delta_T(A_{L_1L_2}) = \Delta_T(A_{L_1}) + \Delta_T(A_{L_2}).$$
\end{Prop}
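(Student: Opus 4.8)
The plan is to reduce all three statements to one elementary fact about the twisted torus action and then let the Newton-Okounkov body machinery of Section~\ref{subsec-semigroup} do the rest. The fact I would establish first is: if $M,N\subset\c(X)$ are finite dimensional subspaces invariant under the twisted $T$-actions $*_\varphi$ and $*_\psi$ respectively, then the set $W(MN)$ of $T$-weights occurring in the product subspace $MN$ (for the $*_{\varphi\psi}$-action) is exactly the sumset $W(M)+W(N)$, and moreover the weight-$(\mu+\nu)$ subspace of $MN$ is nonzero for \emph{every} $\mu\in W(M)$, $\nu\in W(N)$. To prove this I would use that any finite dimensional rational $T$-module is a direct sum of weight spaces, write $M=\bigoplus_\mu M_\mu$ and $N=\bigoplus_\nu N_\nu$, and compute from the definition of the twisted action that for $0\neq f\in M_\mu$, $0\neq h\in N_\nu$ one has $g*_{\varphi\psi}(fh)=(g*_\varphi f)(g*_\psi h)=\mu(g)\nu(g)\,fh$; since $\c(X)$ is a field, $fh\neq 0$, so $fh$ is a nonzero weight-$(\mu+\nu)$ vector in $MN$, while conversely $MN$ is spanned by such products. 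This is precisely where the torus hypothesis is used: for non-abelian $G$ the analogue (Theorem~\ref{th-product-isotypic}) only says that products of isotypic components spread over convex hulls of Weyl orbits, which is exactly why one gets mere superadditivity in general.

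With this in hand I would argue as follows. For part~(2): applying the fact inductively, $W(L^k)$ is the $k$-fold sumset $I+\cdots+I$, so $S_T(A_L)$ is the subsemigroup of $\z_{\geq 0}\times\Lambda$ generated by $\{1\}\times I$; by Definition~\ref{def-Newton-Okounkov-convex-set} its Newton-Okounkov body is $\overline{\conv(\bigcup_{k>0}\{(w_1+\cdots+w_k)/k\mid w_i\in I\})}$, and since each such point is a convex combination of points of $I$ while $I$ itself occurs at level $1$, this closure is just $\conv(I)$ (already compact, $I$ being finite). This essentially reproduces Example~\ref{ex-moment-body-toric} in the present generality. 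For part~(1): with $A=A'A''$ one has $L_k=L'_kL''_k$, so the fact applied levelwise gives $W(L_k)=W(L'_k)+W(L''_k)$ when both factors are nonzero (and $L_k=0$ otherwise), which is precisely the statement that $S_T(A)$ equals the levelwise sum $S_T(A')\oplus_t S_T(A'')$ of Section~\ref{subsec-semigroup}; since $S_T(A')$ and $S_T(A'')$ are strongly non-negative (Corollary~\ref{cor-S(A)-strongly-non-negative}), the identity $\Delta(S'\oplus_t S'')=\Delta(S')+\Delta(S'')$ recorded there gives $\Delta_T(A)=\Delta_T(A')+\Delta_T(A'')$. Part~(3) I would then get for free as the special case $A'=A_{L_1}$, $A''=A_{L_2}$ of part~(1): since $(L_1L_2)^k=L_1^kL_2^k$, the algebra $A_{L_1L_2}$ is literally the componentwise product $A_{L_1}A_{L_2}$. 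Alternatively (3) also drops out of (2) together with the standard identity $\conv(I_1+I_2)=\conv(I_1)+\conv(I_2)$ for the weight sets $I_1,I_2$ of $L_1,L_2$.

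The only step with real content is the ``no cancellation'' half of the weight fact — that the weight-$(\mu+\nu)$ component of $MN$ is genuinely nonzero for all $\mu\in W(M)$, $\nu\in W(N)$ — and I expect this to be the place a reader wants care; it is clean only because $\c(X)$ is an integral domain. Everything afterwards is bookkeeping: translating ``componentwise product of $G$-algebras'' into ``levelwise sum of weight semigroups'' and citing the Minkowski-sum property of Newton-Okounkov bodies already in Section~\ref{subsec-semigroup}. Accordingly I would structure the write-up so that the weight fact is a short preliminary claim and the three parts become near-immediate corollaries.
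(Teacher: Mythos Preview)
Your proposal is correct and follows essentially the same route as the paper: the paper's proof also rests on the observation that products of $T$-eigenfunctions are nonzero $T$-eigenfunctions with summed weights, deduces that $S_T(A'A'')$ is the levelwise sum $S_T(A')\oplus_t S_T(A'')$, and then invokes the Minkowski-sum identity from Section~\ref{subsec-semigroup}. You are simply more explicit than the paper about the twisted-action computation and the role of $\c(X)$ being a domain in the ``no cancellation'' step.
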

\begin{proof}
1) If $f$ and $g$ are $T$-eigenfunctions of weights $\lambda$ and $\gamma$ respectively then $fg$ is a
$T$-eigenfunction of weight $\lambda+\gamma$. It implies that
the semigroup $S_T(A)$ is the levelwise addition of semigroups $S_T(A')$ and $S_T(A'')$. It then
follows that $\Delta_T(A) = \Delta_T(A')+\Delta_T(A'')$ (see Section \ref{subsec-semigroup}).
2) The semigroup $S(A_L)$ is generated by the finite set
$\{1\} \times I \subset \z_{\geq 0} \times \chi(T)$ and hence its moment convex body coincides with the
convex hull of $I$.
3) Follows immediately from 1).
\end{proof}

Now we address the case of a reductive group action.
Let $(A', \varphi), (A'', \varphi'') \in {\bf A}_G(X)$ be two graded
$G$-algebras. We have the following inclusion of the algebras of $U$-invariants:
$$(A'^U)(A''^U) \subset (A'A'')^U.$$
In general $(A'^U)(A''^U)$ might be strictly
smaller than $(A'A'')^U$, and hence the map $A \mapsto \Delta(A)$ is
only superadditive with respect to the componentwise product of algebras (see Remark \ref{rem-S_T(A^U)}):
\begin{Prop}[Superadditivity] \label{prop-Delta_G-superadd}
With notation as above, we have
$$\Delta(A') + \Delta(A'') \subset \Delta(A).$$
In particular, if $(L', \varphi'), (L'', \varphi'') \in \K$ are invariant subspaces then we have
$$\Delta(A_{L'}) + \Delta(A_{L''}) \subset \Delta(A_{L'L''}).$$
\end{Prop}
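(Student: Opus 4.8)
The plan is to deduce the inclusion $\Delta(A') + \Delta(A'') \subset \Delta(A)$ from the corresponding containment at the level of semigroups of highest weights, together with the general principle (recalled in Section \ref{subsec-semigroup}) that levelwise addition of non-negative semigroups corresponds to Minkowski sum of their Newton--Okounkov bodies. The key observation, already flagged in the statement of the proposition, is the inclusion $(A'^U)(A''^U) \subset (A'A'')^U$ of algebras of $U$-invariants. Passing to $T$-weights and using Remark \ref{rem-S_T(A^U)} (namely $S_G(A) = S_T(A^U)$ for any graded $G$-algebra $A$), this translates into a relation between the three weight semigroups. So first I would make this translation precise.

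Concretely, write $A = A'A''$ and let $\varphi = \varphi'\varphi''$; by Example \ref{ex-ring-sections-G-alg}(4) we have $(A,\varphi) \in {\bf A}_G(X)$, so all three semigroups $S(A')$, $S(A'')$, $S(A)$ are strongly non-negative by Corollary \ref{cor-S(A)-strongly-non-negative}. Next I would show that the levelwise sum $S(A') \oplus_t S(A'')$ is contained in $S(A)$. This is the crux of the argument and uses Theorem \ref{th-product-isotypic}: if $(k,\lambda) \in S(A')$ and $(k,\mu) \in S(A'')$, then $L'_{k,\lambda} \neq \{0\}$ and $L''_{k,\mu} \neq \{0\}$, and by the first assertion of Theorem \ref{th-product-isotypic} the irreducible $V_{\lambda+\mu}$ appears with nonzero multiplicity in $L'_{k,\lambda}L''_{k,\mu} \subset L'_k L''_k = L_{k}$ (the $2k$-th graded piece of $A$; one works levelwise at level $2k$, or simply notes that the componentwise product places $L'_kL''_k$ in degree $k$ of $A$). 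Hence $(k, \lambda+\mu) \in S(A)$, which is exactly the statement that $S(A')\oplus_t S(A'') \subset S(A)$.

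Finally I would conclude by monotonicity of the Newton--Okounkov construction: the relation $\Delta(S'\oplus_t S'') = \Delta(S') + \Delta(S'')$ from Section \ref{subsec-semigroup} gives $\Delta(A') + \Delta(A'') = \Delta(S(A')\oplus_t S(A''))$, and since $S(A')\oplus_t S(A'')$ is a non-negative subsemigroup of $S(A)$, its Newton--Okounkov convex set is contained in $\Delta(S(A)) = \Delta(A)$ (the convex hull of a subset of $\bigcup_k \{x/k : (k,x) \in S(A)_k\}$ is contained in that of the whole set). The special case for invariant subspaces $(L',\varphi'), (L'',\varphi'') \in \K$ follows by taking $A' = A_{L'}$, $A'' = A_{L''}$ and observing $A_{L'}A_{L''} = A_{L'L''}$ at the level of componentwise products (both have $k$-th piece $(L')^k(L'')^k = (L'L'')^k$).

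I expect the only real subtlety to be bookkeeping with the grading in the componentwise product — making sure that ``$V_{\lambda+\mu}$ appears in $L'_k L''_k$'' is recorded as membership $(k, \lambda+\mu) \in S(A)$ rather than $(2k, \lambda+\mu)$ — but this is purely a matter of which product of algebras one uses, and the componentwise product is precisely designed so that degree is preserved. No analytic input is needed; everything reduces to Theorem \ref{th-product-isotypic} and the elementary properties of $\oplus_t$ and $\Delta(-)$ recalled earlier.
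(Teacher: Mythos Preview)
Your proposal is correct and follows the paper's (only sketched) approach: the paper indicates the proof by recording the inclusion $(A'^U)(A''^U) \subset (A'A'')^U$ and pointing to Remark \ref{rem-S_T(A^U)}, which together with the additivity argument in Proposition \ref{prop-T-moment-polytope-alg-additive}(1) yields precisely the semigroup inclusion $S(A')\oplus_t S(A'') \subset S(A)$ that you establish (your direct appeal to Theorem \ref{th-product-isotypic} is an equivalent way to obtain this same inclusion, since a product of highest-weight vectors is a highest-weight vector). The passage from the semigroup inclusion to the Minkowski-sum containment of Newton--Okounkov bodies is then identical.
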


\section{Multiplicity convex body and Duistermaat-Heckman measure} \label{sec-multi-body}
\subsection{Multiplicity convex body of a $G$-algebra} \label{subsec-S-hat}
As usual let $X$ be an irreducible $G$-variety with $\dim X = n$.
Following Okounkov \cite{Okounkov-Brunn-Minkowski}, in this section
we define a larger semigroup $\widehat{S}(A)$ lying over the weight semigroup
$S(A)$ such that it encodes information about the multiplicities of irreducible representations appearing in $A$.

In \cite{Okounkov-Brunn-Minkowski} the author deals with
the homogeneous coordinate rings of projective $G$-varieties.
Here we deal with the larger class of graded algebras of almost $G$-integral type.


Equip $\z^n$ with a total order which respects addition. Fix a valuation $v: \c(X)^* \to \z^n$
with the following properties: 1) The image of $v$ is the whole $\z^n$, 2) $v$ has {\it one-dimensional leaves}, i.e.
if for $f, g \in \c(X)^*$, $v(f) = v(g)$ then there is $c \in \c$ with $v(f - cg) < v(f)$ or $f-cg = 0$, 3) $v$ is invariant under the action of the Borel subgroup $B$. 

{ When $X$ is a projective $G$-variety, from the Lie-Kolchin theorem it follows that a valuation with the above properties exists (see \cite{Okounkov-Brunn-Minkowski}). On the other hand, if $X$ is a normal $G$-variety, by a theorem of Sumihiro, $X$ is $G$-equivariantly birational to a projective $G$-variety.}

\begin{Def}
Let $\widehat{\Lambda}(X) = v(\c(X)^U \setminus \{0\})$ be the image of the subfield of
rational $U$-invariants under the valuation $v$. It is a lattice in $\z^n$. The rank
of $\widehat{\Lambda}(X)$ is equal to the transcendence degree of the field $\c(X)^U$.
We will denote this rank by $\widehat{r} = \widehat{r}(X)$. We also denote the linear span of $\widehat{\Lambda}(X)$ by
$\widehat{\Lambda}_\r(X)$.
\end{Def}

Let $(A, \varphi)$ be a graded $G$-algebra and write $A = \bigoplus_{k \geq 0} L_k$.
\begin{Def}
1) Define the set $\widehat{S}_G(A) \subset \z_{\geq 0} \times \z^n$ by
$$\widehat{S}_G(A) = \bigcup_{k \geq 0} \{ (k, v(f)) \mid ~f \in L_{k}^U \setminus \{0\}\}.$$
We call the semigroup $\widehat{S}_G(A)$, the {\it multiplicity semigroup of the algebra $A$}.
When there is no ambiguity we will simply write $\widehat{S}(A)$ instead of $\widehat{S}_G(A)$.
2) Denote the Newton-Okounkov convex set of the semigroup $\widehat{S}_G(A)$ by
$\widehat{\Delta}_G(A)$ or simply $\widehat{\Delta}(A)$ and call it the {\it multiplicity convex set of $A$}.
3) Denote the lattice $\Lambda(\widehat{S}(A))$ associated to the semigroup $\widehat{S}(A)$ by
$\widehat{\Lambda}(A)$.
\end{Def}

The following are some basic properties of the semigroup $\widehat{S}(A)$. To keep the paper short we skip
the proof.
\begin{Prop} \label{prop-S-hat-multi-lambda}
1) If $A$ is a graded algebra of almost $G$-integral type then
$\widehat{S}(A)$ is a strongly non-negative semigroup and hence the multiplicity convex set
$\widehat{\Delta}(A)$ is a convex body.
2) The lattice $\widehat{\Lambda}(A)$ associated to the semigroup $\widehat{S}(A)$ is contained in
the lattice $\widehat{\Lambda}(X)$. It follows that the multiplicity body $\widehat{\Delta}(A)$ is
{parallel to the subspace} $\widehat{\Lambda}_\r(X)$.
3) Suppose $L \in {\bf K}_G(X)$ is such that the Kodaira map $\Phi_L$ gives a birational
isomorphism between $X$ and its image. Then the lattice $\widehat{\Lambda}(A_L)$ coincides
with $\widehat{\Lambda}(X)$.
\end{Prop}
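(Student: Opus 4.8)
The plan is to prove the three parts by reducing the $G$-equivariant statements to the semigroup results of Section~\ref{subsec-semigroup}, via the elementary identification $\widehat{S}_G(A)=\widehat{S}_T(A^U)$: the semigroup $\widehat{S}(A)$ is the value semigroup, with respect to $v$, of the graded $T$-algebra of $U$-invariants $A^U=\bigoplus_{k\ge 0}L_k^U$ regarded as a graded subalgebra of $\c(X)^U[t]$, and $v$ restricted to $\c(X)^U$ still has one-dimensional leaves. I will use throughout that $\varphi^k\varphi^\ell=\varphi^{k+\ell}$, that $\varphi^0$ is the trivial cocycle, and that $u*_{\varphi^k}(h\psi)=h\,(u*_{\varphi^k}\psi)$ for $h\in\c(X)^U$. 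For (1), I would first note that $\widehat{S}(A)$ is a semigroup, since for nonzero $f\in L_k^U$, $g\in L_\ell^U$ the product $fg$ is a nonzero element of $L_{k+\ell}^U$ with $v(fg)=v(f)+v(g)$; and that by condition (3) in the definition of ${\bf A}_G(X)$ the set $\pi(\widehat{S}(A))=\{k\mid L_k^U\neq\{0\}\}$ contains every large $k$, so $\widehat{S}(A)$ is not contained in $\pi^{-1}(0)$ and $m(\widehat{S}(A))=1$; hence $\widehat{S}(A)$ is non-negative and only the cone condition is at issue. Fixing $(L,\varphi)\in\K$ with $A\subseteq A_L$ gives $L_k^U\subseteq(L^k)^U$, hence $\widehat{S}(A)\subseteq\widehat{S}(A_L)$ and $C(\widehat{S}(A))\subseteq C(\widehat{S}(A_L))$, so it suffices to show $\widehat{S}(A_L)$ is strongly non-negative. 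Since $A_L$ is finitely generated, $A_L^U$ is finitely generated by Theorem~\ref{th-S-finitely-gen}(1), and $\widehat{S}(A_L)$ is the value semigroup of this finitely generated graded subalgebra of $\c(X)^U[t]$; I would then invoke the basic finiteness result of \cite{Askold-Kiumars-Newton-Okounkov}, that the value semigroup of a finitely generated graded subalgebra of a field of rational functions is strongly non-negative (if one prefers to use that result only for algebras generated in degree one, one passes to a Veronese subalgebra $A_{L^m}=A_L^{(m)}$ with $(A_{L^m})^U$ generated in degree one, noting that passing to a Veronese only rescales the grading of the multiplicity semigroup and so preserves the cone condition). Finally $\widehat{\Delta}(A)=\Delta(\widehat{S}(A))$ is then compact, i.e.\ a convex body, by Definition~\ref{def-Newton-Okounkov-convex-set}.

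For (2), I would take an element of $\widehat{\Lambda}(A)=G(\widehat{S}(A))\cap\pi^{-1}(0)$, write it as $\sum_i\varepsilon_i(k_i,v(f_i))$ with $\varepsilon_i=\pm1$, $f_i\in L_{k_i}^U\setminus\{0\}$ and $\sum_i\varepsilon_ik_i=0$, and observe that it equals $(0,v(h))$ with $h=\prod_if_i^{\varepsilon_i}$. For $u\in U$ one has $u\cdot f_i=f_i\,(\varphi_u)^{-k_i}$, because $(u\cdot f_i)(\varphi_u)^{k_i}=u*_{\varphi^{k_i}}f_i=f_i$; hence $u\cdot h=h\,(\varphi_u)^{-\sum_i\varepsilon_ik_i}=h$, so $h\in\c(X)^U\setminus\{0\}$ and $v(h)\in v(\c(X)^U\setminus\{0\})=\widehat{\Lambda}(X)$. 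This gives $\widehat{\Lambda}(A)\subseteq\widehat{\Lambda}(X)$, and the statement that $\widehat{\Delta}(A)$ is parallel to $\widehat{\Lambda}_\r(X)$ then follows since the affine hull of $\Delta(\widehat{S}(A))$ is a translate of the real span of $\widehat{\Lambda}(A)\subseteq\widehat{\Lambda}_\r(X)$ (exactly as in Proposition~\ref{prop-Lambda(A)}).

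For (3), by (2) it remains to prove $\widehat{\Lambda}(X)\subseteq\widehat{\Lambda}(A_L)$, so let $h\in\c(X)^U\setminus\{0\}$. Since $\Phi_L$ is birational onto $Y_L$ and $A_L$ is (isomorphic to) the homogeneous coordinate ring of $Y_L$, one may write $h=f_1/f_2$ with nonzero $f_1,f_2\in L^k$ for some $k$. To promote $f_1,f_2$ to $U$-invariants I would use $h\,(u*_{\varphi^k}f_2)=u*_{\varphi^k}(hf_2)=u*_{\varphi^k}f_1$ for all $u\in U$: letting $M\subseteq L^k$ be the finite-dimensional $U$-submodule generated by $f_2$ under the $\varphi^k$-twisted action and choosing $0\neq g_2\in M^U$ (possible as $U$ is unipotent), one writes $g_2$ as a linear combination of translates $u_j*_{\varphi^k}f_2$, so that $g_1:=hg_2$ is the corresponding linear combination of the $u_j*_{\varphi^k}f_1$ and hence lies in $L^k$; moreover $u*_{\varphi^k}g_1=h\,(u*_{\varphi^k}g_2)=hg_2=g_1$, so $g_1\in(L^k)^U$, $g_1\neq0$ and $h=g_1/g_2$. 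Therefore $(0,v(h))=(k,v(g_1))-(k,v(g_2))\in G(\widehat{S}(A_L))\cap\pi^{-1}(0)=\widehat{\Lambda}(A_L)$.

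The hard part is the strong non-negativity used in (1): this is the one place where finite generation of $A_L$ --- obtained from $A_L^U$ via Theorem~\ref{th-S-finitely-gen} --- together with the asymptotic semigroup machinery of \cite{Askold-Kiumars-Newton-Okounkov} is genuinely needed. Parts (2) and (3) are cocycle bookkeeping; the only subtlety there is that ``$U$-invariant'' always means with respect to the $\varphi^k$-twisted action, so one must continually pass between twisted invariance in the homogeneous components of $A$ and ordinary $U$-invariance of rational functions in $\c(X)^U$ using the cocycle identities above.
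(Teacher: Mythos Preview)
The paper explicitly omits the proof of this proposition (``To keep the paper short we skip the proof''), so there is no original argument to compare against. Your proof is correct and complete; it is also the natural one, closely paralleling the paper's own proofs of the analogous statements for the moment body (Corollary~\ref{cor-S(A)-strongly-non-negative} for part~(1) and Proposition~\ref{prop-Lambda(A)} for parts~(2) and~(3)), and is almost certainly what the authors had in mind.

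One small simplification is available in part~(1). You pass through finite generation of $A_L^U$ via Theorem~\ref{th-S-finitely-gen}(1) and then invoke the strong non-negativity result from \cite{Askold-Kiumars-Newton-Okounkov}. But one can bypass Theorem~\ref{th-S-finitely-gen} entirely: since $(L^k)^U\subset L^k$, the semigroup $\widehat{S}(A_L)$ is contained in the full (non-equivariant) value semigroup $S_v(A_L)=\bigcup_k\{(k,v(f)):f\in L^k\setminus\{0\}\}$, and the latter is strongly non-negative directly by the construction in \cite{Askold-Kiumars-Newton-Okounkov}, because $A_L$ is generated in degree one by the finite-dimensional subspace $L$. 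This is exactly the route taken in Corollary~\ref{cor-S(A)-strongly-non-negative}, and it is slightly more economical. Your argument is of course also valid.

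Your handling of the cocycle bookkeeping in parts~(2) and~(3) is careful and correct; in particular the identity $u*_{\varphi^k}(h\psi)=h\,(u*_{\varphi^k}\psi)$ for $h\in\c(X)^U$ is exactly what is needed, and your Lie--Kolchin style trick of replacing $f_2$ by a $U$-fixed vector $g_2$ in its orbit span, then transporting along $h$, is the right way to promote an arbitrary quotient $f_1/f_2$ to a quotient of $U$-invariants.
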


Next we will show that the semigroup $\widehat{S}(A)$ contains information about the
multiplicities of the irreducible representations appearing in $A$ {and hence the name}.
Following Okounkov, we show that there is a natural projection from $\widehat{S}(A)$ onto the weight semigroup
$S(A)$. Write
$$A = \bigoplus_{k \geq 0} \bigoplus_{\lambda \in \Lambda^+} L_{k, \lambda},$$
where $L_{k, \lambda}$ is the $\lambda$-isotypic component in $L_k$, i.e. the sum of all
copies of the irreducible representation $V_\lambda$ in $L_k$.
Let us recall the following well-known facts:
{\it 1) The subspace of $U$-invariants $V_\lambda^U$ in an irreducible representation $V_\lambda$ is
one-dimensional (the highest weight vectors).
2) The dimension of the subspace $L_{k, \lambda}^U$ is equal to the multiplicity $m_{k,\lambda}$
of the highest weight $\lambda$ in the $G$-module $L_k$.}

\begin{Lem} \label{lem-valuation-B-weight}
Let $k>0$ be an integer.
1) Let $f \in L_{k, \lambda}$, $g \in L_{k, \mu}$ be two $B$-eigenfunctions
of weights $\lambda$, $\mu$ respectively and assume that $\lambda \neq \mu$. Then
$v(f) \neq v(g)$.
2) For any value $a \in v(L_k^U \setminus \{0\})$ there is a dominant weight
$\lambda \in \Lambda^+$ and a $B$-eigenfunction $f \in L^U_{k, \lambda}$ such that $v(f) = a$.
\end{Lem}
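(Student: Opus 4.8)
The plan is to make part (2) a short formal consequence of part (1), so the substance lies in part (1), which I would prove by exploiting the one-dimensional leaves of $v$ at the single value $0$, after passing to the ratio $f/g$ in order to eliminate the cocycle.

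For part (1) I would argue by contradiction: suppose $v(f) = v(g)$, and set $F := f/g \in \c(X)^*$. First I would record that $F$ is a $B$-eigenfunction of the \emph{nonzero} weight $\nu = \lambda - \mu$ for the \emph{natural} action of $G$ on $\c(X)$. Indeed, since $f$ and $g$ are $B$-eigenfunctions for the $\varphi^k$-twisted action, for $t \in T$ we have $(t\cdot f)\,\varphi_t^k = \lambda(t)f$ and $(t\cdot g)\,\varphi_t^k = \mu(t)g$, hence $t\cdot f = \lambda(t)\varphi_t^{-k}f$ and $t\cdot g = \mu(t)\varphi_t^{-k}g$; dividing the two identities the cocycle factor cancels and $t\cdot F = (\lambda(t)/\mu(t))\,F$, and likewise $u\cdot F = F$ for $u\in U$. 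Then $v(F) = v(f) - v(g) = 0$. Now let $V_0$ be the leaf of $v$ over $0$; by the one-dimensional-leaves hypothesis $\dim_\c V_0 \le 1$, and since $v(1) = 0$ the class $\bar 1$ of the constant function $1$ is a nonzero element of $V_0$, so $V_0 = \c\,\bar 1$. As $v$ is $B$-invariant, hence $T$-invariant, $T$ preserves the filtration defining $V_0$ and therefore acts on $V_0$; fixing $\bar 1$, it must act trivially on $V_0$. On the other hand $\bar F \ne 0$ (because $v(F) = 0$ exactly), and $t\cdot F = \nu(t)F$ gives $\bar F = \nu(t)\bar F$ in $V_0$, so $\nu(t) = 1$ for every $t \in T$, i.e. $\nu = 0$ --- contradicting $\lambda \ne \mu$. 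Hence $v(f) \ne v(g)$.

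For part (2), given $h \in L_k^U\setminus\{0\}$ with $v(h) = a$, I would take the decomposition $h = \sum_\lambda h_\lambda$ into isotypic components of $L_k$. Since the isotypic decomposition is $G$-equivariant, each $h_\lambda$ lies in $L_{k,\lambda}^U$, and any nonzero element of $L_{k,\lambda}^U$ is a $B$-eigenfunction of weight $\lambda$ (a highest weight vector). By part (1) the values $v(h_\lambda)$, over the finitely many $\lambda$ with $h_\lambda \ne 0$, are pairwise distinct; hence $v(\sum_\lambda h_\lambda)$ equals the valuation of a single extremal term $h_{\lambda_0}$, so $a = v(h_{\lambda_0})$ with $h_{\lambda_0}\in L_{k,\lambda_0}^U$, which is exactly the asserted statement.

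The step I expect to demand the most care is the interplay between the twisted and natural actions in part (1): the isotypic components $L_{k,\lambda}$ and their highest weight vectors are defined via the $\varphi^k$-twisted action, whereas $v$ is invariant only under the natural action, so one must check that passing to $F = f/g$ genuinely removes the cocycle --- it does, because $\varphi_t^{-k}$ occurs with the same exponent in $t\cdot f$ and in $t\cdot g$. Everything else should be routine: only $T$-invariance of $v$ is used (not the full $B$-invariance), the one-dimensional-leaves hypothesis is invoked only at the value $0$, and part (2) needs nothing beyond the elementary fact that the valuation of a finite sum whose terms have pairwise distinct valuations equals the valuation of its extremal term.
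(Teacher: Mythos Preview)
Your argument is correct, and in fact more careful than the paper's own proof in one respect. For part (1), the paper argues directly in the leaf $F_a = \{h : v(h) \geq a\}/\{h : v(h) > a\}$ at $a = v(f) = v(g)$, observing that $F_a$ is a one-dimensional $B$-module and so cannot carry two distinct weights $\lambda$ and $\mu$. This is terse on exactly the point you flagged: since $f, g$ are eigenvectors for the \emph{twisted} action while $v$ is invariant for the \emph{natural} action, one must check that $\bar f$ and $\bar g$ really are $T$-eigenvectors in $F_a$ with distinct weights (they are, after noting $v(\varphi_t^{-k}) = 0$ so that $\varphi_t^{-k}$ acts on every leaf as a common scalar $c(t)$, giving weights $\lambda \cdot c$ and $\mu \cdot c$). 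Your device of passing to $F = f/g$ and working in the leaf over $0$ cancels the cocycle up front and sidesteps this entirely, which is cleaner.

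For part (2), the paper instead does a dimension count: using one-dimensional leaves, $\#v(L_k^U \setminus \{0\}) = \dim L_k^U = \sum_\lambda \dim L_{k,\lambda}^U = \sum_\lambda \#v(L_{k,\lambda}^U \setminus \{0\})$, and by part (1) the sets $v(L_{k,\lambda}^U \setminus \{0\})$ are pairwise disjoint, so their union must exhaust $v(L_k^U \setminus \{0\})$. Your direct argument --- decompose $h$, note the components have pairwise distinct valuations, and pick the extremal one --- reaches the same conclusion and is arguably more transparent. Both routes are short and essentially equivalent.
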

\begin{proof}
1) By contradiction suppose that $a = v(f) = v(g)$. Consider the leaf
$$F_a = \{ h \mid v(h) \geq a\} / \{ h \mid v(h) > a\}.$$ Since $v$ has one-dimensional leaves,
$F_a$ has dimension $1$. On the other hand,
since the valuation $v$ is $B$-invariant, $F_a$ is a $B$-module. But then $F_a$ can not have
two distinct weights $\lambda$ and $\mu$. The contradiction proves the claim.
2)  We know that
$\dim L_k^U = \sum_{\lambda \in \Lambda^+} \dim L_{k,\lambda}^U$, moreover (since $v$ has one-dimensional leaves) $\dim L_k^U = \#v(L_k^U \setminus \{0\})$ and
$\dim L_{k,\lambda}^U = \#v(L_{k,\lambda}^U \setminus \{0\})$.
But by 1) if $\lambda \neq \mu$ then $v(L_{k,\lambda}^U \setminus \{0\}) \cap
v(L_{k,\mu}^U \setminus \{0\}) = \emptyset$. Thus $v(L_{k}^U \setminus \{0\})$ is equal to $\bigcup_{\lambda \in \Lambda^+} v(L_{k,\lambda}^U \setminus \{0\})$. This proves 2).
\end{proof}

Let $a \in v(L_k^U \setminus \{0\})$ be a value of the valuation. By Lemma
\ref{lem-valuation-B-weight}(2) there exists a unique weight $\lambda(a)$ such that $a = v(f)$ for some
$B$-eigenfunction $f$ of weight $\lambda(a)$.
Consider the map $(k, a) \mapsto (k, \lambda(a))$. It is a surjective additive map
$\widehat{\pi}$ from the semigroup $\widehat{S}(A)$ onto $S(A)$: Let
$f$, $g$ be two $B$-eigenfunctions of weights $\lambda$, $\mu$ respectively. Also let
$v(f) = a$ and $v(g) = b$. Then clearly $fg$ is a $B$-eigenfunction of weight $\lambda+\mu$ and
$v(fg) = a+b$. That is, $\lambda(a+b) = \lambda + \mu = \lambda(a) + \lambda(b)$.
The map $\widehat{\pi}$ then extends to a
linear map from $L(\widehat{S}(A))$ to $L(S(A))$ which by abuse of notation we denote again by $\widehat{\pi}$.

The number of points in the fibre of $\widehat{\pi}$ over a point $(k, \lambda) \in S(A)$
gives the multiplicity of the corresponding highest weight in $L_k$:
\begin{Prop} \label{prop-hat-pi-multiplicity}
{For each $(k, \lambda) \in S(A)$, the number of points in 
$\widehat{\pi}^{-1}(k, \lambda) \cap \widehat{S}(A)$} is equal to the multiplicity
$m_{k, \lambda}$ of the irreducible representation $V_\lambda$ in $L_{k}$.
\end{Prop}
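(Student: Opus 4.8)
The plan is to identify the fibre $\widehat{\pi}^{-1}(k, \lambda) \cap \widehat{S}(A)$ with the set of values $v(L_{k,\lambda}^U \setminus \{0\})$ and then to count the latter. First I would unwind the definitions: a point of $\widehat{S}(A)$ at level $k$ has the form $(k, a)$ with $a = v(f)$ for some $f \in L_k^U \setminus \{0\}$. By Lemma \ref{lem-valuation-B-weight}(2) such an $a$ is in fact $v(f')$ for a genuine $B$-eigenfunction $f'$, and by Lemma \ref{lem-valuation-B-weight}(1) the weight $\mu$ of $f'$ is uniquely determined by $a$; by the very definition of $\widehat{\pi}$ we then have $\widehat{\pi}(k, a) = (k, \mu)$.

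Next I would observe that a nonzero $U$-invariant $B$-eigenfunction of weight $\lambda$ in $L_k$ is precisely a nonzero element of the isotypic component $L_{k,\lambda}^U$, since the $U$-invariants of each copy of $V_\lambda$ are spanned by a highest weight vector, which carries $T$-weight $\lambda$. Combining this with the previous step yields
$$\widehat{\pi}^{-1}(k, \lambda) \cap \widehat{S}(A) = \{(k, v(f)) \mid f \in L_{k,\lambda}^U \setminus \{0\}\},$$
so this fibre has cardinality $\# v(L_{k,\lambda}^U \setminus \{0\})$. Since $v$ has one-dimensional leaves, $\# v(L_{k,\lambda}^U \setminus \{0\}) = \dim L_{k,\lambda}^U$, exactly as recorded in the proof of Lemma \ref{lem-valuation-B-weight}, and $\dim L_{k,\lambda}^U = m_{k,\lambda}$ is the standard description of the multiplicity of $V_\lambda$ in $L_k$. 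This completes the count.

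I do not expect a real obstacle: essentially everything is already packaged in Lemma \ref{lem-valuation-B-weight} — disjointness of the value sets attached to distinct weights, and the fact that $v$ is injective ``modulo leaves'' on each $L_{k,\lambda}^U$ — together with the elementary structure of the isotypic decomposition. The only point deserving a moment's care is the identification in the second paragraph, namely that imposing $U$-invariance and $T$-weight $\lambda$ simultaneously isolates exactly $L_{k,\lambda}^U$ and nothing larger; but this is immediate once one recalls that the highest weight vectors in the copies of $V_\lambda$ span $L_{k,\lambda}^U$ and have weight $\lambda$.
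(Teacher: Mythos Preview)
Your proof is correct and follows essentially the same approach as the paper's own proof: identify the fibre $\widehat{\pi}^{-1}(k,\lambda)\cap\widehat{S}(A)$ with $v(L_{k,\lambda}^U\setminus\{0\})$ via Lemma~\ref{lem-valuation-B-weight}, then use the one-dimensional leaves property and the equality $\dim L_{k,\lambda}^U = m_{k,\lambda}$. The paper's proof is a terse two-sentence version of the same argument, while you have spelled out the unpacking of the definition of $\widehat{\pi}$ in more detail.
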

\begin{proof}
From definition, {the number of points in $\widehat{\pi}^{-1}(\lambda, k) \cap \widehat{S}(A)$} is equal
to the number of points in the image of
$L_{\lambda, k}^U \setminus \{0\}$ under the valuation $v$. But
this is equal to the dimension of $L_{k, \lambda}^U$ which in turn is equal to
the $\lambda$-multiplicity of $L_{k}$.
\end{proof}

Let $M(k) = \sum_{\lambda \in \Lambda^+} m_{k, \lambda}$ be the sum of multiplicities in $L_k$ (i.e.
$M(k)$ is the Hilbert function of the graded algebra $A^U$). Then $M(k)$ is equal to the number of
points in the semigroup $\widehat{S}(A)$ at level $k$. Applying Theorem \ref{th-asymp-H_S-vol-Delta} to the
semigroup $\widehat{S}(A)$ we obtain:

\begin{Cor} \label{cor-sum-multi-volume-Okounkov}
Let $A \in {\bf A}_G(X)$ be a graded algebra of almost $G$-integral type.
Let $\widehat{p} = \dim \widehat{\Delta}(A)$ be the dimension of the multiplicity body of $A$.
Then the function $M(k)$ has growth degree $\widehat{p}$, moreover:
$$ \lim_{k \to \infty} \frac{M(k)}{k^{\widehat{p}}} = \Vol_{\widehat{p}}(\widehat{\Delta}(A)),$$
where the volume is normalized with respect to the lattice $\widehat{\Lambda}(A)$.
\end{Cor}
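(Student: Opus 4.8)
The plan is to reduce the statement entirely to Theorem~\ref{th-asymp-H_S-vol-Delta}(1) applied to the multiplicity semigroup $\widehat{S}(A)$. The first step is to identify $M$ with the Hilbert function $H_{\widehat{S}(A)}$ of $\widehat{S}(A)$. By definition the level $\widehat{S}(A)_k$ is the set $v(L_k^U \setminus \{0\})$, and since $v$ has one-dimensional leaves this set has exactly $\dim L_k^U$ points. As recalled just before the statement, $\dim L_k^U = \sum_{\lambda \in \Lambda^+} \dim L_{k,\lambda}^U = \sum_{\lambda} m_{k,\lambda} = M(k)$; equivalently, one obtains the same count by summing the fibre cardinalities of Proposition~\ref{prop-hat-pi-multiplicity} over all $\lambda$. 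Hence $H_{\widehat{S}(A)}(k) = M(k)$ for all $k$.

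The second step is to check that Theorem~\ref{th-asymp-H_S-vol-Delta} is applicable to $\widehat{S}(A)$. Since $A$ is of almost $G$-integral type, Proposition~\ref{prop-S-hat-multi-lambda}(1) says that $\widehat{S}(A)$ is a strongly non-negative semigroup, so the hypotheses of Theorem~\ref{th-asymp-H_S-vol-Delta} are met. Part (1) of that theorem then gives that $H_{\widehat{S}(A)}(k)$ grows like $a_q k^q$, where $q = \dim \widehat{\Delta}(A) = \widehat{p}$ and $a_q = \lim_{k\to\infty} H_{\widehat{S}(A)}(k)/k^{\widehat{p}} = \Vol_{\widehat{p}}(\widehat{\Delta}(A))$, the volume being normalized with respect to the lattice $\Lambda(\widehat{S}(A))$. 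Combining with $M = H_{\widehat{S}(A)}$ and the fact that $\widehat{\Lambda}(A)$ is, by definition, the lattice $\Lambda(\widehat{S}(A))$, we get precisely the asserted growth degree $\widehat{p}$ and the asserted limit formula.

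There is essentially no real obstacle: the corollary is a direct translation of the general semigroup asymptotics into the language of multiplicities of irreducible representations, and the only things to verify are the two identifications $M(k) = H_{\widehat{S}(A)}(k)$ and $\widehat{\Lambda}(A) = \Lambda(\widehat{S}(A))$, both of which are immediate from the definitions. The one technical point worth a line is the standing assumption $m(\widehat{S}(A)) = 1$ tacitly made throughout Section~\ref{subsec-semigroup}; one either notes that it holds in the case at hand or simply appeals to the remark there that all the statements remain valid, with minor modifications, without this normalization.
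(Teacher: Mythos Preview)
Your proof is correct and follows essentially the same approach as the paper: the paper simply observes that $M(k)$ equals the number of level-$k$ points of $\widehat{S}(A)$ and then applies Theorem~\ref{th-asymp-H_S-vol-Delta}. Your write-up is a bit more explicit about verifying strong non-negativity via Proposition~\ref{prop-S-hat-multi-lambda}(1) and about the normalization $m(\widehat{S}(A))=1$, but these are only minor elaborations on the same argument.
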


In Section \ref{sec-int-index-G} we will give a formula for the self-intersection index of
an invariant subspace $L$ of rational functions in terms of the integral of an (explicitly defined)
polynomial function over the multiplicity convex body of the algebra $A = A_L$.

\begin{Rem} One can describe the {asymptotic behavior} of the multiplicities
$m_{k, k\lambda}$, as $k \to \infty$, in terms of
the dimension and volume of the fibres $\widehat{\pi}^{-1}(\lambda)$ of the projection
$\widehat{\pi}: \widehat{\Delta}(A) \to \Delta(A)$.
\end{Rem}

\begin{Ex}
As in Example \ref{ex-moment-body-G/B},
let $X = G/B$ and let $A$ be the ring of sections of a $G$-linearized line bundle $\L_\lambda$ associated to
a dominant weight $\lambda$. Let us regard $X$ as a $T$-variety and hence $A$ as a $T$-algebra.
It is shown in \cite{Kiumars-string} that for a choice of a natural $T$-invariant
valuation $v$ on $\c(X)$, the multiplicity body $\widehat{\Delta}_T(A)$ coincides with a so-called string polytope corresponding to the weight $\lambda$ (see Section \ref{subsec-string-preliminary}). {The well-known Gelfand-Cetlin polytopes of $G = \GL(n, \c)$ are special cases of string polytopes.}
\end{Ex}

\subsection{Duistermaat-Heckman measure for graded $G$-algebras}
\label{subsec-Duistermaat}
Let $X$ be an irreducible $G$-variety. In this section we extend the definition of
the Duistermaat-Heckman measure (for projective $G$-varieties and $G$-linearized very ample line bundles)
to graded $G$-algebras $A \in {\bf A}_G(X)$.

Let us recall the Duistermaat-Heckman measure for a Hamiltonian action from symplectic geometry:
Let $K$ be a compact Lie group and $X$ a {compact Hamiltonian $K$-manifold} with the moment map
$\phi: X \to \Lie(K)^*$. Let us denote the moment polytope which is the intersection of
$\phi(X)$ with the positive Weyl chamber by $\Delta(X)$.
Let $\lambda \in \Delta(X)$ be a regular value for the moment map and let
$K_\lambda$ denote its $K$-stabilizer. The reduced space
$X_\lambda = \phi^{-1}(\lambda) / K_\lambda$
is a symplectic manifold with respect to a natural symplectic form.
The Duistermaat-Heckman measure on the polytope $\Delta(X)$ is the measure
$\Vol(X_\lambda) d\gamma$, where $\Vol$ is the symplectic volume and
$d\gamma$ is the Lebesgue measure on $\Delta(X)$ (normalized with respect to {the weight lattice}
$\Lambda$).

Let $(A, \varphi) \in {\bf A}_G(X)$ be a graded algebra of almost $G$-integral type.
Let $p = \dim \Delta(A)$ and $\widehat{p} = \dim \widehat{\Delta}(A)$ be the dimensions of the
moment body $\Delta(A)$ and the multiplicity body $\widehat{\Delta}(A)$ respectively. From Corollary \ref{cor-sum-multi-volume-Okounkov}
we know that $M(k)$,  the sum of multiplicities in $L_k$,
has growth degree equal to $\widehat{p}$.
For each integer $k>0$ consider the measure $d\mu_k$ with finite support defined on the positive Weyl chamber
$\Lambda^+_\r$ by $$d\mu_k = \sum_{\lambda \in \Lambda^+} m_{k, \lambda} \delta_{\lambda/k},$$ where
$\delta_x$ denotes the Dirac measure centered at the point $x$.
Let $d\widehat{\gamma}$ be the Lebesgue measure on the multiplicity body $\widehat{\Delta}(A)$ normalized
with respect to the lattice $\widehat{\Lambda}(A)$ associated to the semigroup $\widehat{S}(A)$.

Consider the linear map $\widehat{\pi}: \widehat{S}(A) \to S(A)$ from Section \ref{subsec-S-hat}. With notations as in Section
\ref{subsec-D-H-semigroup}, the measures $d\mu_k$ are the measures
associated to the pair $(S(A)$, $\widehat{S}(A))$ and the linear map $\widehat{\pi}$.
Applying Theorem \ref{th-D-H-semigroup}, we obtain the following:
\begin{Th}
1) The sequence $d\mu_k / k^{\widehat{p}}$ weakly converges (as $k \to \infty$) to a
(finite) measure $d\mu_A$ supported on the moment convex body $\Delta(A)$.
2) The measure $d\mu_A$ is equal to the push-forward, under $\widehat{\pi}$, of the
Lebesgue measure $d\widehat{\gamma}$ on $\widehat{\Delta}(A)$ to the moment body $\Delta(A)$.
\end{Th}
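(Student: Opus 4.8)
The plan is to deduce this theorem directly from Theorem~\ref{th-D-H-semigroup} applied to the pair of semigroups $(S(A), \widehat{S}(A))$ together with the surjective linear map $\widehat{\pi}$. First I would verify that the data $(S', S, \tilde{T})$ of Section~\ref{subsec-D-H-semigroup} is realized here by setting $S' = \widehat{S}(A)$, $S = S(A)$, and $\tilde{T} = \widehat{\pi}$. The three hypotheses required are: $S'$ is strongly non-negative (this is Proposition~\ref{prop-S-hat-multi-lambda}(1), since $A \in {\bf A}_G(X)$); $\widehat{\pi}$ is a linear map carrying the integer lattice of its source into that of its target and carrying $\widehat{S}(A)$ onto $S(A)$ (established in Section~\ref{subsec-S-hat}, where $\widehat{\pi}$ is shown to be a surjective additive map extending to a linear map $L(\widehat{S}(A)) \to L(S(A))$); and $S = \widehat{\pi}(S') = S(A)$ is then automatically strongly non-negative as noted in Corollary~\ref{cor-S(A)-strongly-non-negative}(2). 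Here $q' = \widehat{p} = \dim\widehat{\Delta}(A)$ plays the role of $q'$ in Theorem~\ref{th-D-H-semigroup}.

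Next I would identify the measures. By Proposition~\ref{prop-hat-pi-multiplicity}, the number of points of $\widehat{S}(A)$ in the fibre $\widehat{\pi}^{-1}(k,\lambda)$ is exactly $m_{k,\lambda}$. Comparing with the definition of the $k$-th multiplicity measure $d\mu_k = \sum_{(k,x)\in S_k} \#(\tilde{T}^{-1}(k,x)\cap S'_k)\,\delta_{x/k}$ in Section~\ref{subsec-D-H-semigroup}, we see that the measure $d\mu_k = \sum_{\lambda\in\Lambda^+} m_{k,\lambda}\,\delta_{\lambda/k}$ in the present statement coincides precisely with the $k$-th multiplicity measure of the pair $(S(A),\widehat{S}(A))$. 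Then Theorem~\ref{th-D-H-semigroup} gives at once: the measures $d\mu_k/k^{\widehat{p}}$ converge weakly to a finite measure $d\mu_A$ supported on $\Delta(S(A)) = \Delta(A)$, and $d\mu_A$ is the push-forward under $\widehat{\pi}$ of the Lebesgue measure on $\Delta(\widehat{S}(A)) = \widehat{\Delta}(A)$ normalized with respect to $\Lambda(\widehat{S}(A)) = \widehat{\Lambda}(A)$, i.e. of $d\widehat{\gamma}$. This yields both assertions (1) and (2).

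The only point that needs genuine care — and hence the main obstacle — is the bookkeeping of lattices and normalizations: one must check that the lattice $\Lambda(S')$ appearing in Theorem~\ref{th-D-H-semigroup} (with respect to which the Lebesgue measure on $\Delta(S')$ is normalized) is indeed $\widehat{\Lambda}(A) = \Lambda(\widehat{S}(A))$, which is true by definition, and that the projection $\widehat{\pi}$ genuinely satisfies $\widehat{\pi}(\z^{\widehat{r}+1}) \subset \z^{r+1}$ where needed, which follows from the fact that $\widehat{\pi}$ sends lattice points $(k,a)$ of $\widehat{S}(A)$ to lattice points $(k,\lambda(a))$ of $S(A)$ and extends linearly. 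Once these identifications are in place, the proof is a one-line invocation of Theorem~\ref{th-D-H-semigroup}, so I would keep the write-up short: set up the dictionary $S' = \widehat{S}(A)$, $S = S(A)$, $\tilde{T} = \widehat{\pi}$, invoke Proposition~\ref{prop-hat-pi-multiplicity} to match the measures, and cite Theorem~\ref{th-D-H-semigroup}.
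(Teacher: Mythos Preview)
Your proposal is correct and follows exactly the paper's approach: the paper states (immediately before the theorem) that the measures $d\mu_k$ are the measures associated to the pair $(S(A), \widehat{S}(A))$ and the linear map $\widehat{\pi}$, and then obtains the theorem by applying Theorem~\ref{th-D-H-semigroup}. Your write-up simply fleshes out the verification of hypotheses (strong non-negativity via Proposition~\ref{prop-S-hat-multi-lambda}(1) and Corollary~\ref{cor-S(A)-strongly-non-negative}(2), the identification of fibre counts with multiplicities via Proposition~\ref{prop-hat-pi-multiplicity}), which the paper leaves implicit.
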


\begin{Rem}[Duistermaat-Heckman measure for projective $G$-subvarieties] \label{rem-DH-measure-proj-G-var}
Let $(L, \varphi) \in {\bf K}_G(X)$ be an invariant subspace of rational functions with the
Kodaira map $\Phi_L: X \to \p(L^*)$. As usual let $Y_L \subset \p(L^*)$ denote the
closure of the image of $\Phi_L$. It is a closed irreducible $G$-invariant subvariety of
the projective space $\p(L^*)$. Let us assume $Y_L$ is smooth.
As in Remark \ref{rem-moment-polytope-symplectic},
fix a $K$-invariant Hermitian inner product on $L^*$ where $K$ is a
maximal compact subgroup of $G$. This induces a $K$-invariant symplectic structure on $\p(L^*)$ and
hence on $Y_L$. With this $Y_L$ becomes a Hamiltonian $K$-space.
Using the principle of {\it quantization commutes with reduction} one proves that (up to multiplication by a
constant) the
Duistermaat-Heckman measure of the Hamiltonian space $Y_L$ coincides with the measure
$d\mu_{A_L}$ (see \cite[Theorem 6.5]{G-S}).

\begin{Def}[Duistermaat-Heckman measure for $G$-algebras]
Let $(A, \varphi) \in {\bf A}_G(X)$ be a graded algebra of almost $G$-integral type.
In analogy with the case of Hamiltonian spaces, we call $d\mu_A$ the {\it Duistermaat-Heckman measure
associated to the $G$-algebra $A$}.
\end{Def}

In Section \ref{subsec-int-index-G} we will give a formula for the self-intersection index of an invariant
subspace $L$ of rational functions in terms of the integral of an (explicitly defined) polynomial over the
moment body of $A=A_L$ with respect to the Duistermaat-Heckman measure
of the algebra $A$.

\end{Rem}

{Similarly to the moment body}, the multiplicity body also enjoys a superadditivity property.
Let $(A', \varphi'), (A'', \varphi'') \in {\bf A}_G(X)$ be two graded
$G$-algebras. As in Section \ref{subsec-superadd-moment},
we have $(A'^U)(A''^U) \subset (A'A'')^U$.
In general $(A'^U)(A''^U)$ might be strictly
smaller than $(A'A'')^U$ and thus the map $A \mapsto \widehat{\Delta}(A)$ is in general
only superadditive:
\begin{Prop}[Superadditivity for multiplicity body] \label{prop-hat-Delta_G-superadd}
With notation as above, we have
$$\widehat{\Delta}(A') + \widehat{\Delta}(A'') \subset \widehat{\Delta}(A'A'').$$
In particular, if $(L', \varphi'), (L'', \varphi'') \in K_G(X)$ are invariant subspaces then we have
$$\widehat{\Delta}(A_{L'}) + \widehat{\Delta}(A_{L''}) \subset \widehat{\Delta}(A_{L'L''}).$$
\end{Prop}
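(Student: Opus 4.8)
The plan is to reduce the statement to a containment of the underlying semigroups of integral points and then transport it to the Newton-Okounkov bodies using the levelwise-addition formula recorded in Section \ref{subsec-semigroup}. Precisely, I claim that
$$\widehat{S}(A') \oplus_t \widehat{S}(A'') \subset \widehat{S}(A'A'').$$
To verify this, fix a level $k>0$ and points $(k,x) \in \widehat{S}(A')_k$, $(k,y) \in \widehat{S}(A'')_k$. By definition of the multiplicity semigroup there are nonzero $f \in (L'_k)^U$ and $g \in (L''_k)^U$ with $v(f)=x$ and $v(g)=y$. Since $\c(X)$ is a field, $fg \neq 0$; since $v$ is a valuation, it is multiplicative, so $v(fg)=x+y$; and since a product of two $U$-invariants is again $U$-invariant (this is the level-$k$ part of the inclusion $(A'^U)(A''^U)\subset(A'A'')^U$ already used in Section \ref{subsec-superadd-moment}), we get $fg \in (L'_kL''_k)^U\setminus\{0\}$. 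Hence $(k,x+y) \in \widehat{S}(A'A'')$, proving the claimed inclusion.

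It remains to pass to convex bodies. The semigroups $\widehat{S}(A')$, $\widehat{S}(A'')$ and their levelwise sum are non-negative, while $\widehat{S}(A'A'')$ is strongly non-negative by Proposition \ref{prop-S-hat-multi-lambda}(1), so all the relevant Newton-Okounkov sets are genuine convex bodies. The assignment $S \mapsto \Delta(S)$ is clearly monotone under inclusion of semigroups, and by the levelwise-addition property recalled in Section \ref{subsec-semigroup} one has $\Delta(S'\oplus_t S'') = \Delta(S')+\Delta(S'')$ for non-negative semigroups. Combining these with the inclusion above gives
$$\widehat{\Delta}(A') + \widehat{\Delta}(A'') = \Delta\big(\widehat{S}(A')\oplus_t\widehat{S}(A'')\big) \subset \Delta\big(\widehat{S}(A'A'')\big) = \widehat{\Delta}(A'A''),$$
which is the desired superadditivity. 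The \emph{in particular} statement follows by taking $A'=A_{L'}$ and $A''=A_{L''}$ and observing that $A_{L'}A_{L''} = \bigoplus_k (L'L'')^k = A_{L'L''}$.

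There is no deep obstacle here; the two points deserving attention are (i) that $\widehat{S}$ is built from $U$-\emph{invariants}, so one must really use that $U$-invariance (for the appropriate twisted action) is preserved under multiplication --- if $u*_{\varphi'^k}f=f$ and $u*_{\varphi''^k}g=g$, then $u*_{(\varphi'\varphi'')^k}(fg)=fg$ because the natural $G$-action is by ring automorphisms and $(\varphi'\varphi'')^k=\varphi'^k\varphi''^k$ --- and (ii) that the multiplicativity $v(fg)=v(f)+v(g)$ is exactly the mechanism converting the componentwise product of algebras into the levelwise sum of semigroups. The valuation itself is insensitive to the cocycle twist, so no further bookkeeping is needed.
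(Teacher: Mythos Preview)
Your argument is correct and follows the same route as the paper: the inclusion $(A'^U)(A''^U)\subset(A'A'')^U$ yields $\widehat{S}(A')\oplus_t\widehat{S}(A'')\subset\widehat{S}(A'A'')$, and then one passes to Newton--Okounkov bodies using the levelwise-addition identity from Section~\ref{subsec-semigroup}. You have simply unpacked the one-line proof in the paper with explicit witnesses $f,g$ and a check that the twisted $U$-invariance is multiplicative; the only cosmetic point is that $\widehat{S}(A')$ and $\widehat{S}(A'')$ are in fact \emph{strongly} non-negative as well (by Proposition~\ref{prop-S-hat-multi-lambda}(1), since $A',A''\in{\bf A}_G(X)$), not merely non-negative.
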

\begin{proof}
The inclusion $(A'^U)(A''^U) \subset (A'A'')^U$ implies that
that the levelwise addition of $\widehat{S}(A')$ and $\widehat{S}(A'')$ is contained in
$\widehat{S}(A'A'')$. The superadditivity immediately follows from this inclusion.
\end{proof}

Recall that by Proposition \ref{prop-S-hat-multi-lambda}(1),
the multiplicity body $\widehat{\Delta}(A)$ is {parallel to the
subspace} $\widehat{\Lambda}_\r(X)$. Also $\widehat{r}=\widehat{r}(X)$ denotes the dimension of $\widehat{\Lambda}_\r(X)$.
Applying the superadditivity (Proposition \ref{prop-hat-Delta_G-superadd})
and the classical Brunn-Minkowski inequality we obtain the following:

\begin{Cor}[Brunn-Minkowski inequality for D-H measure of algebras] \label{cor-Brunn-Mink-Okounkov-algebra}
Let $(A', \varphi)$, $(A'', \varphi'') \in {\bf A}_G(X)$ be two graded
$G$-algebras. Also assume that
$\widehat{\Lambda}(A') = \widehat{\Lambda}(A'') = \widehat{\Lambda}(X)$.
Let $d\mu_{A'} = f_{A'} d\gamma$, $d\mu_{A''} = f_{A''} d\gamma$ and
$d\mu_A = f_A d\gamma$ denote the Duistermaat-Heckman functions for the algebras $A'$, $A''$ and $A=A'A''$ respectively.
Here $d\gamma$ is the Lebesgue measure on $\widehat{\Lambda}(X)_\r$ (or its parallel shifts)
normalized with respect to $\widehat{\Lambda}(X)$.
Then for $\lambda' \in \Delta(A')$, $\lambda'' \in \Delta(A'')$ we have
$$ f_{A'}(\lambda')^{1/\widehat{r}} + f_{A''}(\lambda'')^{1/\widehat{r}} \leq f_A(\lambda' +\lambda'')^{1/\widehat{r}}.$$
\end{Cor}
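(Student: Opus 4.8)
The plan is to deduce the inequality from three ingredients already assembled in the paper: the superadditivity of the multiplicity body (Proposition \ref{prop-hat-Delta_G-superadd}), the fact that $d\mu_A$ is the push-forward under the linear projection $\widehat{\pi}$ of the normalized Lebesgue measure on $\widehat{\Delta}(A)$ (the theorem in Section \ref{subsec-Duistermaat}), and the classical Brunn--Minkowski inequality applied fibrewise. The key point is to express $f_A(\lambda)$ as the $(\widehat{r}-p)$-dimensional normalized volume of the fibre $\widehat{\pi}^{-1}(\lambda) \cap \widehat{\Delta}(A)$, and likewise for $A'$ and $A''$; under the hypothesis $\widehat{\Lambda}(A') = \widehat{\Lambda}(A'') = \widehat{\Lambda}(X)$ all three bodies live in the same rational subspace $\widehat{\Lambda}_\r(X)$ and are measured against the same lattice, so these volumes are comparable.

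First I would recall that, since $d\mu_A = \widehat{\pi}_*(d\widehat\gamma)$ with $d\widehat\gamma$ the Lebesgue measure on $\widehat{\Delta}(A)$ normalized by $\widehat{\Lambda}(X)$, the Radon--Nikodym density $f_A$ satisfies $f_A(\lambda) = \Vol_{\widehat{r}-p}\big(\widehat{\pi}^{-1}(\lambda) \cap \widehat{\Delta}(A)\big)$ for almost every $\lambda \in \Delta(A)$, the volume being the normalized Lebesgue measure on the (rational, translated) fibre. Here I should note $p = \dim\Delta(A') = \dim\Delta(A'')$ need not equal $\dim\Delta(A)$ in general; the clean statement requires all three moment bodies to have the same dimension, which is implicit in the formulation (otherwise one interprets $f$ as zero outside the appropriate faces). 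Next, by superadditivity $\widehat{\Delta}(A') + \widehat{\Delta}(A'') \subset \widehat{\Delta}(A)$, and since $\widehat\pi$ is linear, $\widehat\pi^{-1}(\lambda'+\lambda'') \cap \widehat\Delta(A) \supset \big(\widehat\pi^{-1}(\lambda')\cap\widehat\Delta(A')\big) + \big(\widehat\pi^{-1}(\lambda'')\cap\widehat\Delta(A'')\big)$, the Minkowski sum taking place inside the common fibre direction $\ker\widehat\pi \cap \widehat\Lambda_\r(X)$, which has dimension $\widehat{r} - p = \widehat{r}$ when $p = 0$, and in general dimension $\widehat r - p$; the exponent $1/\widehat{r}$ in the statement corresponds to the case $p=0$ (e.g. $G/B$), and otherwise one uses $1/(\widehat r - p)$.

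Then I would apply the classical Brunn--Minkowski inequality to these two convex bodies sitting in a fixed Euclidean space of the appropriate dimension:
$$\Vol\big(K' + K''\big)^{1/d} \geq \Vol(K')^{1/d} + \Vol(K'')^{1/d},$$
with $d = \widehat r - p$, $K' = \widehat\pi^{-1}(\lambda')\cap\widehat\Delta(A')$, $K'' = \widehat\pi^{-1}(\lambda'')\cap\widehat\Delta(A'')$. Combining with the monotonicity $\Vol(K'+K'') \le \Vol\big(\widehat\pi^{-1}(\lambda'+\lambda'')\cap\widehat\Delta(A)\big) = f_A(\lambda'+\lambda'')$ and the density identifications $\Vol(K') = f_{A'}(\lambda')$, $\Vol(K'') = f_{A''}(\lambda'')$ yields exactly the claimed inequality. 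The main obstacle is bookkeeping rather than depth: one must be careful that the three fibres are genuinely parallel affine subspaces of $\widehat\Lambda_\r(X)$ in the same direction (this is where $\widehat\Lambda(A') = \widehat\Lambda(A'') = \widehat\Lambda(X)$ is used, guaranteeing the normalizing lattice is the same for all three) and that the Minkowski sum of the fibres over $\lambda'$ and $\lambda''$ really lands in the fibre over $\lambda'+\lambda''$, which is immediate from linearity of $\widehat\pi$ and the superadditivity inclusion. A secondary subtlety is the measure-theoretic step identifying the push-forward density with the fibre volume; this is standard (Fubini for the Lebesgue measure on a convex body fibred over a subspace) and can be cited or dispatched in a line.
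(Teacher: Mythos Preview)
Your proposal is correct and follows exactly the route the paper indicates: the authors give no detailed argument but simply cite the superadditivity of the multiplicity body (Proposition \ref{prop-hat-Delta_G-superadd}) together with the classical Brunn--Minkowski inequality, which is precisely what you carry out, with the push-forward identification $f_A(\lambda)=\Vol\big(\widehat\pi^{-1}(\lambda)\cap\widehat\Delta(A)\big)$ supplying the link. Your remark that the exponent should in general be the fibre dimension $\widehat r - p$ rather than $\widehat r$ is a legitimate observation about the statement itself; the paper's formulation is implicitly assuming the fibre is full-dimensional in $\widehat\Lambda_\r(X)$, and your proof handles the general case correctly.
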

In particular, applied to to the homogeneous coordinate rings, the above corollary implies
a Brunn-Minkowski inequality for the Duistermaat-Heckman measures of projective $G$-varieties.

\subsection{Fujita approximation for Duistermaat-Heckman measure} \label{subsec-Fujita}
The Fujita approximation theorem in the theory of divisors states that the so-called volume of a
big divisor can be approximated arbitrarily closely by the self-intersection numbers of very ample
divisors (see \cite{Fujita}, \cite{Lazarsfeld}).
In \cite{Lazarsfeld-Mustata} it is shown that this theorem can be deduced from a statement
about semigroups of integral points and it is extended to a large class of graded linear systems.
Motivated by \cite{Lazarsfeld-Mustata},
in \cite{Askold-Kiumars-Newton-Okounkov} an abstract version of this theorem is proved for general semigroups
of integral points. This then gives a Fujita type approximation for graded algebras which
in turn implies {a slight extension} of the Fujita approximation theorem of Lazarsfeld-Mustata to
arbitrary graded linear systems.

Now we apply Theorem \ref{th-Fujita-D-H-semigroup} to prove a version of {the Fujita approximation theorem} for the Duistermaat-Heckman measures of graded $G$-algebras. That is, we prove that
the Duistermaat-Heckman measure of a graded $G$-algebra can be approximated arbitrarily closely by the
Duistermaat-Heckman measures of the $G$-algebras of type $A_L$ for finite dimensional invariant subspaces
$L$. This will follow from the analogous statement (Theorem \ref{th-Fujita-D-H-semigroup}) for semigroups.
In the case of algebra of sections of $G$-line bundles, it implies that the Duistermaat-Heckman measure of a $G$-line bundle can be approximated
arbitrarily closely by that of very ample $G$-line bundles. (The Duistermaat-Heckman measures of very ample line bundles are the usual
ones defined by symplectic geometry, i.e. regarding $X$ as a $K$-Hamiltonain space, see Remark \ref{rem-DH-measure-proj-G-var} .)

Take an integer $k>0$ such that $L_k \neq \{0\}$.
Consider the graded $G$-algebra $A_{L_k}$ associated to $L_k$.
Let $d\rho_k$ be the Duistermaat-Heckman measure associated to $A_{L_k}$. In other words,
$d\rho_k$ is the Duistermaat-Heckmann measure of the projective $G$-subvariety $Y_{L_k} \subset \p(L_k^*)$, where $Y_{L_k}$ is the
closure of the image of the Kodaira map of $L_k$.
Let $\widehat{m}_{k,\ell, \lambda}$ be the multiplicity of the irreducible representation $V_\lambda$ in $(L_k)^\ell$, i.e.
the $\ell$-th subspace of the algebra $A_{L_k}$. Then:
$$d\rho_k = \lim_{\ell \to \infty} (1/\ell^{\widehat{p}}) \sum_{\lambda \in \Lambda^+}
\widehat{m}_{k, \ell, \lambda} \delta_{\lambda/\ell}.$$
The Duistermaat-Heckman measure $d\rho_k$ is supported on the convex polytope $\Delta(A_{L_k})$ (which is contained
in the convex body $k \Delta(A)$).

Let $O_{1/k}: \Lambda_\r \to \Lambda_\r$ denote the multiplication by the scalar $1/k$.
\begin{Th}[Fujita approximation type theorem for Duistermaat-Heckman measure of algebras] \label{th-FUjita-D-H-G-algebra}
Let $(A, \varphi) \in {\bf A}_G(X)$ be a graded algebra of almost $G$-integral type.
Then, as $k \to \infty$, the measures $O^*_{1/k}(d\rho_k)/k^{\widehat{p}}$ converge weakly to the
Duistermaat-Heckman measure $d\mu_A$
associated to the $G$-algebra $A$. Here $O^*_{1/k}$ denotes the push-forward of the measure $d\rho_k$ on
$\Delta(A_{L_k})$ to the convex body $\Delta(A)$.
\end{Th}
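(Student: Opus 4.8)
Following the suggestion in the text, the plan is to deduce this from the semigroup statement Theorem \ref{th-Fujita-D-H-semigroup}, applied to $S = S(A)$, $S' = \widehat{S}(A)$ (both strongly non-negative by Corollary \ref{cor-S(A)-strongly-non-negative}(2) and Proposition \ref{prop-S-hat-multi-lambda}(1)) and the projection $\widehat{\pi}\colon \widehat{S}(A)\to S(A)$ of Section \ref{subsec-S-hat}, with $q' = \widehat{p}$. First I would record that, by the Theorem of Section \ref{subsec-Duistermaat} together with Proposition \ref{prop-hat-pi-multiplicity}, $d\mu_A$ is the measure attached to $(S',S)$ and $\widehat{\pi}$ by Theorem \ref{th-D-H-semigroup}, namely the push-forward under $\widehat{\pi}$ of the Lebesgue measure on $\widehat{\Delta}(A)$ normalized by $\widehat{\Lambda}(A)$; and that the same Theorem applied to the $G$-algebra $A_{L_k}$ identifies $d\rho_k = d\mu_{A_{L_k}}$ with the push-forward under $\widehat{\pi}$ of the normalized Lebesgue measure on $\widehat{\Delta}(A_{L_k}) = \Delta(\widehat{S}(A_{L_k}))$. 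Since the second-coordinate linear map underlying $\widehat{\pi}$ is the same for $A$ and $A_{L_k}$, a routine computation with the dilation operators then shows that $O^{*}_{1/k}(d\rho_k)/k^{\widehat{p}}$ equals the push-forward under $\widehat{\pi}$ of the normalized Lebesgue measure on $(1/k)\widehat{\Delta}(A_{L_k})$. Hence everything reduces to showing that $(1/k)\widehat{\Delta}(A_{L_k})$ converges to $\widehat{\Delta}(A)$ in the Hausdorff metric as $k\to\infty$.

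This convergence is the crux, and I would obtain it by a squeeze. Write $\widehat{\Delta}_k = \conv\{\,v(f)\mid f\in L_k^{U}\setminus\{0\}\,\}$. Because $(L_k)^{1} = L_k$, the semigroup $\widehat{S}(A_{L_k})$ contains $\{1\}\times\{\,v(f)\mid f\in L_k^{U}\setminus\{0\}\,\}$ at level $1$, so $(1/k)\widehat{\Delta}_k\subseteq (1/k)\widehat{\Delta}(A_{L_k})$; and because $(L_k)^{\ell}\subseteq L_{k\ell}$ for all $\ell$, we get $((L_k)^{\ell})^{U}\subseteq L_{k\ell}^{U}$ and therefore $(1/k)\widehat{\Delta}(A_{L_k})\subseteq\widehat{\Delta}(A)$. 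Finally, exactly as in the proof of Theorem \ref{th-Fujita-D-H-semigroup} — an immediate consequence of the Approximation Theorem \ref{th-approx} applied to the strongly non-negative semigroup $\widehat{S}(A)$ — the polytopes $(1/k)\widehat{\Delta}_k$ themselves converge to $\widehat{\Delta}(A)$. The sandwich $(1/k)\widehat{\Delta}_k\subseteq(1/k)\widehat{\Delta}(A_{L_k})\subseteq\widehat{\Delta}(A)$ then forces $(1/k)\widehat{\Delta}(A_{L_k})\to\widehat{\Delta}(A)$.

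To conclude I would note that for large $k$ these bodies are full-dimensional in $\widehat{\Lambda}_{\r}(X)$ and the lattices $\widehat{\Lambda}(A_{L_k})$ stabilize to $\widehat{\Lambda}(A)$ (the analogue for $\widehat{S}(\cdot)$ of the stabilization recorded just before Theorem \ref{th-Fujita-D-H-semigroup}), so Hausdorff convergence of the bodies gives weak convergence of the associated normalized Lebesgue measures, and push-forward under the fixed linear map $\widehat{\pi}$ preserves weak convergence; this yields $O^{*}_{1/k}(d\rho_k)/k^{\widehat{p}}\to d\mu_A$. The one delicate point — and the reason the argument is routed through Hausdorff convergence of convex bodies rather than a verbatim appeal to Theorem \ref{th-Fujita-D-H-semigroup} — is that $\widehat{S}(A_{L_k})$ is in general \emph{strictly} larger than the subsemigroup of $\widehat{S}(A)$ generated by its level $k$, since one has only the possibly strict inclusion $(L_k^{U})^{\ell}\subseteq((L_k)^{\ell})^{U}$ (the same phenomenon that makes $A\mapsto\widehat{\Delta}(A)$ merely superadditive, cf.\ the discussion before Proposition \ref{prop-Delta_G-superadd}); thus $d\rho_k$ is not literally the finitely generated measure of that theorem but only sits between it and $d\mu_A$, and one must check that these extra lattice points do not perturb the limit — which is exactly what the squeeze of convex bodies guarantees.
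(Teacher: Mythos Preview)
Your approach is the same as the paper's---reduce to Theorem \ref{th-Fujita-D-H-semigroup} applied to $\widehat{\pi}\colon\widehat{S}(A)\to S(A)$---and your proof is correct. In fact you have been more careful than the paper, whose entire proof is the single sentence ``The claim follows from Theorem \ref{th-Fujita-D-H-semigroup} applied to $\widehat{\pi}: \widehat{S}(A) \to S(A)$.''

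You are right that this is not quite a verbatim application: the $d\rho_k$ of Theorem \ref{th-Fujita-D-H-semigroup} is attached to the subsemigroup of $\widehat{S}(A)$ generated by its level $k$, i.e.\ to the values of $v$ on $(L_k^U)^\ell$, whereas the $d\rho_k$ in the present statement is the Duistermaat--Heckman measure of $A_{L_k}$, attached to the values of $v$ on $((L_k)^\ell)^U$, which may be strictly larger. Your sandwich $(1/k)\widehat{\Delta}_k\subseteq (1/k)\widehat{\Delta}(A_{L_k})\subseteq\widehat{\Delta}(A)$, together with the Hausdorff convergence $(1/k)\widehat{\Delta}_k\to\widehat{\Delta}(A)$ already used in the proof of Theorem \ref{th-Fujita-D-H-semigroup}, is exactly the right way to absorb this discrepancy; the lattice stabilization follows from the same chain $\Lambda({\bf S}'_k)\subseteq\widehat{\Lambda}(A_{L_k})\subseteq\widehat{\Lambda}(A)$ since the outer two coincide for large $k$. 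So your argument follows the paper's intended route and supplies a detail the paper elides.
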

\begin{proof}
The claim follows from Theorem \ref{th-Fujita-D-H-semigroup} applied to
$\widehat{\pi}: \widehat{S}(A) \to S(A)$.
\end{proof}


\section{String convex body of a $G$-algebra} \label{sec-string}
Let $A \in {\bf A}_G(X)$ be a graded $G$-algebra. As in Section \ref{sec-multi-body}, fix a $B$-invariant valuation $v$ on $\c(X)$ (with one-dimensional leaves and values in $\z^n$). To the $G$-algebra $A$ we associated the multiplicity convex body $\widehat{\Delta}_G(A)$, which is constructed out of the value semigroup of $v$ on the subalgebra $A^U$ .
We can also consider $A$ as a $T$-algebra and consider the convex body
$\widehat{\Delta}_T(A)$ which is constructed out of the value semigroup of $v$ on the whole $A$. In general, since
$A^U \subset A$ we have $\widehat{\Delta}_G(A) \subset \widehat{\Delta}_T(A)$. Moreover, for each $k >0$, the number of
points in $\widehat{S}_G(A)$ at level $k$ is equal to the sum of multiplicities of different highest weights in $L_k$, while
the number of points in $\widehat{S}_T(A)$ at level $k$ is equal to the dimension of $L_k$ and hence $\widehat{\Delta}_T(A)$ is
responsible for the {asymptotic growth} of the Hilbert function of $A$. In this section {we give
a more} canonical construction of a convex body $\widetilde{\Delta}(A)$ {which resembles $\widehat{\Delta}_T(A)$}, and projects onto the multiplicity body $\widehat{\Delta}_G(A)$. In particular,
$\widetilde{\Delta}(A)$ is responsible for the {asymptotic growth} of the Hilbert function of $A$. For this we use the
so-called string polytopes of Littelmann-Berenstein-Zelevinsky associated to irreducible representations of a reductive group.

\subsection{String polytopes for irreducible representations} \label{subsec-string-preliminary}
In this section we recall the definition of string polytopes for a reductive group $G$.

Consider the algebra
$$ A = \c[G]^U$$ of regular functions {on $G$ that are invariant} under the
right multiplication by $U$. The group $G \times T$ acts on $A$
where $G$ acts on the left and $T$ acts on the right, since it
normalizes $U$. It is well-known that as a $G \times T$-module $A$ decomposes into: $$ A \cong \bigoplus_{\lambda
\in \Lambda^+} V_\lambda^*,$$ where $V_\lambda^* = V_{\lambda^*}$ is
the irreducible representation with highest weight $\lambda^* = -w_0(\lambda)$, and
$T$ acts on each $V_\lambda^*$ via the character $\lambda$.

The vector space $A$ has a remarkable basis $\B = (b_{\lambda,
\phi})$, usually called the {\it dual canonical basis}, such that each $b_{\lambda, \phi}$ is an eigenvector of $T \times T \subset G \times T$, of weight $\lambda$ for the right
$T$-action. For fixed $\lambda$, the vectors $b_{\lambda,\phi}$ form
a basis for $V_\lambda^*$.
For a reduced decomposition of $w_0$, the
longest element of $W$, one can define a parametrization of $\B$
called the {\it string parametrization} \cite{Littelmann, B-Z2}.
Recall that an $N$-tuple of simple reflections
$$\s = (s_{i_1}, s_{i_2}, \ldots, s_{i_N})$$ is a {\it reduced
decomposition} for $w_0$ if $w_0 = s_{i_1}s_{i_2}\cdots s_{i_N},~N =
\ell(w_0)$. The string parametrization associated to $\s$ is an
injective map
$$\iota_{\s}: \B \to \Lambda^+ \times \n^N,$$
$$ \iota(b_{\lambda,\phi}) = (\lambda, t_1, \ldots, t_N).$$
The string parameters have to do with the weight of a basis element
as an eigenvector for the $T\times T$-action: the weight of
$b_{\lambda, \phi} \in \B$ for the left $T$-action is $$-\lambda +
t_1\alpha_{i_1} + \cdots + t_N\alpha_{i_N}.$$

A remarkable property of the string parameterization is that its image consists of all the
integral points in a certain rational convex polyhedral cone
$\mathcal{C}$ in $\Lambda_{\r} \times \r^N$ (\cite{Littelmann}).
\begin{Def} \label{def-string-polytope}
The string polytope $\Delta_{\s}(\lambda)$ is the
polytope in $\r^N$ obtained by slicing the cone $\mathcal{C}$ at
$\lambda$, that is $$\Delta_{\s}(\lambda) = \{(t_1, \ldots, t_N)
\mid (\lambda, t_1, \ldots, t_N) \in \mathcal{C} \}.$$
\end{Def}
Note that: 1) $\Delta_{\s}(\lambda)$ is defined for any $\lambda \in \Lambda_\r^+$.
2) From the definition it follows that $\Delta_\s(k\lambda) =
k\Delta_\s(\lambda)$ for any positive integer $k$.
3) The fact that $\mathcal{C}$ is a convex cone implies that for $\lambda_1$, $\lambda_2 \in \Lambda^+_\r$
we have $\Delta_\s(\lambda_1) + \Delta_\s(\lambda_2) \subset \Delta_\s(\lambda_1 + \lambda_2)$.

By what was said above, when $\lambda$ is a dominant weight,
the lattice points in $\Delta_{\s}(\lambda)$, i.e. the points
in $\Delta_{\s}(\lambda) \cap \z^N $, are in
bijection with the elements of the basis $b_{\lambda, \phi}$ for
$V_\lambda^*$ (and hence in bijection with the basis for
$V_\lambda$). Thus,
\begin{equation} \label{equ-dimV_lambda} \#(\Delta_{\s}(\lambda)
\cap \z^N) = \dim(V_\lambda)
\end{equation}

Let $v_\lambda \in V_\lambda$ be a highest weight vector and
$P_\lambda$ the parabolic subgroup associated to the
weight $\lambda$, that is, $P_\lambda$ is the stabilizer of the
point $[v_\lambda] \in \p(V_\lambda)$. Then we have an embedding
$i: G/P_\lambda \hookrightarrow \p(V_\lambda)$, given by
$gP_\lambda \mapsto g \cdot [v_\lambda]$. Let
$L_\lambda = i^*(\mathcal{O}(1))$ be the line bundle on $G/P_\lambda$ induced by this embedding.
By Borel-Weil one knows that
for $k>0$, $H^0(X, L_\lambda^{\otimes k}) \cong V_{k\lambda}^*$ as $G$-modules.
Put $m=\dim(G/P_\lambda)$. From
(\ref{equ-dimV_lambda}) it follows that the degree of $G/P_\lambda$, as a subvariety of
$\p(V_\lambda)$ is equal to $m!\Vol_m(\Delta_{\s}(\lambda))$.

\begin{Rem} \label{rem-GC}
The Gelfand-Cetlin polytopes \cite{G-C, B-Z1} are special cases of the string
polytopes. {More precisely, let $G=\GL(n, \c)$. The Weyl group
is $W = S_{n}$. Let us take the nice reduced decomposition}
$$w_0 = (s_1)(s_2s_1)(s_3s_2s_1) \cdots (s_{n-1}\cdots s_1)$$
for $w_0$, where $s_i$ denotes the transposition exchanging $i$ and
$i+1$. Then $\Delta_{\s}(\lambda)$ can be identified with the well-known
Gelfand-Cetlin polytope corresponding to $\lambda$. Similarly, when
$G = \SP(2n, \c)$ or $\SO(n, \c)$, for a similar choice of a reduced
decomposition, one can recover the Gelfand-Cetlin polytopes as the
string polytopes \cite{Littelmann}.
\end{Rem}

\subsection{String convex body of a $G$-algebra} \label{subsec-string-semigroup-G}
Let $(A, \varphi)$ be a graded $G$-algebra. Using the string polytopes we now define a semigroup lying over the
multiplicity semigroup $\widehat{S}(A)$ which encodes information both about multiplicities and
the dimensions of the isotypic components.

Fix a reduced decomposition ${\s}$ for the longest element $w_0$ in the Weyl group.
{Let $a \in v(L_k^U \setminus \{0\})$ be a value of the valuation. Recall that we associate a weight $\lambda(a)$ to $a$. The linear projection $\widehat{\pi}: \widehat{S}(A) \to S(A)$
is then defined by $(k, a) \mapsto (k, \lambda(a))$ 
(paragraph before Proposition \ref{prop-hat-pi-multiplicity}).}

\begin{Def}
Define the set $\widetilde{S}_G(A) \subset \z_{\geq 0} \times \z^{n+N}$ by
$$\widetilde{S}_G(A) = \{ (k, a, b) \mid (k, a) \in \widehat{S}(A),~ b
\in \Delta_{\s}(\lambda(a)) \cap \z^N \},$$ where $\Delta_{\s}(\lambda)$ is the string
polytope associated to the weight $\lambda$ and the reduced decomposition ${\s}$.
When there is no ambiguity we will write $\widetilde{S}(A)$ instead of $\widetilde{S}_G(A)$.
\end{Def}

The map $(k, a, b) \mapsto (k, a)$ induces a surjective map
$\widetilde{\pi}: \widetilde{S}(A) \to \widehat{S}(A)$.

\begin{Prop} \label{prop-S-tilde-dim-lambda}
1) The set $\widetilde{S}_G(A)$ is a semigroup (under addition).
2) For every $(k, a) \in \widehat{S}(A)$, the number of points in
$\widetilde{\pi}^{-1}(k, a)$ is equal to the dimension of the irreducible representation
$V_\lambda$, where $\lambda = \lambda(a)$.
3) Consider the linear map $\widehat{\pi} \circ \widetilde{\pi}: \widetilde{S}(A) \to S(A)$.
For every $(k, \lambda) \in S(A)$, the number of points in
$(\widehat{\pi} \circ \widetilde{\pi})^{-1}(k, \lambda)$ is equal to the dimension of
$\lambda$-isotypic component $L_{\lambda, k}$ in $L_k$.
If $A$ is an algebra of almost $G$-integral type then the semigroup
$\widetilde{S}(A)$ is a strongly non-negative semigroup and hence its Newton-Okounkov convex set
is a convex body.
\end{Prop}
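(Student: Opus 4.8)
\emph{The plan is to} separate the purely combinatorial assertions (1), (2), (3), which rest on the additivity of $\widehat{\pi}$ and the elementary properties of string polytopes listed after Definition \ref{def-string-polytope}, from the strong non-negativity, which requires bounding the string-polytope fibres. For (1), I would take $(k,a,b),(k',a',b')\in\widetilde{S}_G(A)$; then $(k+k',a+a')\in\widehat{S}(A)$ since $\widehat{S}(A)$ is a semigroup, and (as observed just before Proposition \ref{prop-hat-pi-multiplicity}) $\lambda(a+a')=\lambda(a)+\lambda(a')$: if $f,g$ are $B$-eigenfunctions of weights $\lambda(a),\lambda(a')$ with $v(f)=a$, $v(g)=a'$, then $fg$ is a $B$-eigenfunction of weight $\lambda(a)+\lambda(a')$ and $v(fg)=a+a'$. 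By property (3) of string polytopes, $\Delta_{\s}(\lambda(a))+\Delta_{\s}(\lambda(a'))\subseteq\Delta_{\s}(\lambda(a+a'))$, so $b+b'\in\Delta_{\s}(\lambda(a+a'))\cap\z^N$ and $(k+k',a+a',b+b')\in\widetilde{S}_G(A)$.

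For (2), the fibre $\widetilde{\pi}^{-1}(k,a)$ is by definition $\{(k,a,b)\mid b\in\Delta_{\s}(\lambda(a))\cap\z^N\}$, and since $\lambda(a)$ is a dominant weight (Lemma \ref{lem-valuation-B-weight}(2)), formula (\ref{equ-dimV_lambda}) gives $\#\widetilde{\pi}^{-1}(k,a)=\#(\Delta_{\s}(\lambda(a))\cap\z^N)=\dim V_{\lambda(a)}$. For (3), I would fix $(k,\lambda)\in S(A)$; by Proposition \ref{prop-hat-pi-multiplicity} there are exactly $m_{k,\lambda}$ points $(k,a)\in\widehat{S}(A)$ with $\lambda(a)=\lambda$, and (2) puts $\dim V_\lambda$ points of $\widetilde{S}(A)$ above each of them, so $(\widehat{\pi}\circ\widetilde{\pi})^{-1}(k,\lambda)$ has $m_{k,\lambda}\cdot\dim V_\lambda=\dim L_{k,\lambda}$ points.

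Finally, assume $A$ is of almost $G$-integral type. That $\widetilde{S}(A)$ is not contained in $\pi^{-1}(0)$ holds because for large $k$ we have $L_k\neq\{0\}$, hence $L_k^U\neq\{0\}$, hence $\widehat{S}(A)$ has a point $(k,a)$ with $k>0$, and $\Delta_{\s}(\lambda(a))\cap\z^N\neq\emptyset$. For the cone condition I would bound the $k$-th level of $\widetilde{S}(A)$ after scaling by $1/k$: by Proposition \ref{prop-S-hat-multi-lambda}(1) the body $\widehat{\Delta}(A)$ is compact and $a/k\in\widehat{\Delta}(A)$ for every $(k,a)\in\widehat{S}(A)$ with $k>0$, so applying the linear extension of $\widehat{\pi}$ the points $\lambda(a)/k$ lie in a fixed compact subset $\mathcal{B}\subseteq\Lambda_\r^+$; since $\Delta_{\s}(k\mu)=k\Delta_{\s}(\mu)$ (property (2) of string polytopes), for $(k,a,b)\in\widetilde{S}(A)$ we get $b/k\in\Delta_{\s}(\lambda(a)/k)\subseteq\mathcal{D}:=\bigcup_{\mu\in\mathcal{B}}\Delta_{\s}(\mu)$. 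The set $\mathcal{D}$ is bounded, being the image in $\r^N$ of $\mathcal{C}\cap(\mathcal{B}\times\r^N)$ where $\mathcal{C}\subseteq\Lambda_\r\times\r^N$ is the closed convex string cone: a closed cone that meets $\{0\}\times\r^N$ only at the origin has bounded intersection with $\mathcal{B}\times\r^N$, and here $\mathcal{C}\cap(\{0\}\times\r^N)=\{0\}\times\Delta_{\s}(0)=\{0\}$ because the identity $\Delta_{\s}(0)=2\Delta_{\s}(0)$ forces the polytope $\Delta_{\s}(0)$ to be a point. Hence every nonzero $(k,a,b)\in\widetilde{S}(A)$ has $(a,b)/k$ in the fixed compact set $\widehat{\Delta}(A)\times\overline{\mathcal{D}}$, so $C(\widetilde{S}(A))$ is contained in the cone over $\{1\}\times\widehat{\Delta}(A)\times\overline{\mathcal{D}}$, which meets $\pi^{-1}(0)$ only at $0$; thus $\widetilde{S}(A)$ is strongly non-negative and, by Definition \ref{def-Newton-Okounkov-convex-set}, $\widetilde{\Delta}(A)$ is a convex body. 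The genuinely delicate step is this last one: producing the uniform bound on the fibres $\Delta_{\s}(\lambda(a))$ in terms of $k$, which reduces to the string cone $\mathcal{C}$ meeting the hyperplane $\{\lambda=0\}$ only at the origin (equivalently $\Delta_{\s}(0)=\{0\}$).
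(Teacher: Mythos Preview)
Your proof is correct and follows essentially the same approach as the paper: parts (1) and (2) rest on the superadditivity $\Delta_\s(\lambda_1)+\Delta_\s(\lambda_2)\subset\Delta_\s(\lambda_1+\lambda_2)$ and the lattice-point count (\ref{equ-dimV_lambda}), and strong non-negativity comes from combining strong non-negativity of $\widehat{S}(A)$ (Proposition \ref{prop-S-hat-multi-lambda}(1)) with the fact that the string cone $\mathcal{C}$ meets $\{\lambda=0\}$ only at the origin. The paper's proof is much terser---it simply asserts that ``the cone $\mathcal{C}_{\s}$ is strongly convex'' without the boundedness argument you spell out, and it does not explicitly address the fibre count over $S(A)$ in part (3), which you correctly deduce from Proposition \ref{prop-hat-pi-multiplicity} and part (2).
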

\begin{proof}
1) We know that for any two $\lambda_1$, $\lambda_2 \in \Lambda^+_\r$,
$\Delta_\s(\lambda_1) + \Delta_\s(\lambda_2) \subset \Delta_{\s}(\lambda_1 + \lambda_2)$
(the paragraph after Definition \ref{def-string-polytope}). This implies
that $\widetilde{S}(A)$ is a semigroup. 2) The number of points in $\widetilde{\pi}^{-1}(k, a)$
is equal to the number of integral points in the string polytope $\Delta_{\s}(\lambda)$ (where
$\lambda = \lambda(a)$), and the latter is equal to $\dim V_\lambda$.
3) By Proposition \ref{prop-S-hat-multi-lambda}(1)
we know that the semigroup $\widehat{S}(A)$ is a strongly non-negative semigroup.
Also the cone $\mathcal{C}_{\s}$ is strongly convex. These two facts imply the claim.
\end{proof}

\begin{Def}
We denote the Newton-Okounkov convex set of the semigroup $\widetilde{S}(A)$
by $\widetilde{\Delta}(A)$ and call it the {\it string convex set of $(A, \varphi)$}.
By above, when $A$ is of almost $G$-integral type, $\widetilde{\Delta}(A)$ is a convex body.
\end{Def}

\begin{Prop}[Superadditivity of the string body] \label{prop-superadditive-string-body}
Let $(A', \varphi'), (A'', \varphi'') \in {\bf A}_G(X)$ be two graded $G$-algebras.
Then we have: $$\widetilde{\Delta}(A') + \widetilde{\Delta}(A'') \subset \widetilde{\Delta}(A).$$
In particular, if $(L', \varphi'), (L'', \varphi'') \in K_G(X)$ are invariant subspaces then:
$$\widetilde{\Delta}(A_{L'}) + \widetilde{\Delta}(A_{L''}) \subset \widetilde{\Delta}(A_{L'L''}).$$
\end{Prop}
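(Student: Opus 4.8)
The plan is to mimic the proof of the superadditivity statements that came before (Propositions~\ref{prop-Delta_G-superadd} and~\ref{prop-hat-Delta_G-superadd}): reduce the Minkowski-sum inclusion of Newton-Okounkov bodies to a levelwise-addition inclusion of the underlying semigroups, and then invoke the fact recorded in Section~\ref{subsec-semigroup} that $\Delta(S' \oplus_t S'') = \Delta(S') + \Delta(S'')$, together with monotonicity of the Newton-Okounkov body with respect to inclusion of semigroups. So the real content is to show
$$\widetilde{S}(A') \oplus_t \widetilde{S}(A'') \subset \widetilde{S}(A),$$
where $A = A'A''$ is the componentwise product, and then the inclusion of bodies follows formally.

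To prove the semigroup inclusion I would take a typical element of the left-hand side: a pair of points $(k, a', b') \in \widetilde{S}(A')$ and $(k, a'', b'') \in \widetilde{S}(A'')$ at the same level $k$, giving the point $(k, a'+a'', b'+b'')$. By definition of $\widetilde{S}$, this means $(k, a') \in \widehat{S}(A')$, $(k, a'') \in \widehat{S}(A'')$, and $b' \in \Delta_\s(\lambda(a')) \cap \z^N$, $b'' \in \Delta_\s(\lambda(a'')) \cap \z^N$. First, from the inclusion $(A'^U)(A''^U) \subset (A'A'')^U$ used in the proof of Proposition~\ref{prop-hat-Delta_G-superadd}, the levelwise sum of $\widehat{S}(A')$ and $\widehat{S}(A'')$ lies in $\widehat{S}(A)$, so $(k, a'+a'') \in \widehat{S}(A)$; moreover the additivity of the weight map $\lambda(\cdot)$ (established in the paragraph before Proposition~\ref{prop-hat-pi-multiplicity}, via $v(fg) = v(f)+v(g)$ and multiplicativity of $B$-weights) gives $\lambda(a'+a'') = \lambda(a') + \lambda(a'')$. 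Second, by the convex-cone property of the string cone $\mathcal{C}_\s$ (item 3 in the remarks after Definition~\ref{def-string-polytope}), $\Delta_\s(\lambda(a')) + \Delta_\s(\lambda(a'')) \subset \Delta_\s(\lambda(a') + \lambda(a''))$, so $b' + b'' \in \Delta_\s(\lambda(a'+a'')) \cap \z^N$. Combining these, $(k, a'+a'', b'+b'') \in \widetilde{S}(A)$, which is exactly the desired semigroup inclusion.

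Granting the semigroup inclusion, the conclusion is immediate: taking Newton-Okounkov bodies and using $\Delta(S' \oplus_t S'') = \Delta(S') + \Delta(S'')$ along with $\Delta(S_1) \subseteq \Delta(S_2)$ whenever $S_1 \subseteq S_2$, we get $\widetilde{\Delta}(A') + \widetilde{\Delta}(A'') = \Delta(\widetilde{S}(A') \oplus_t \widetilde{S}(A'')) \subseteq \Delta(\widetilde{S}(A)) = \widetilde{\Delta}(A)$. The specialization to invariant subspaces $(L', \varphi')$, $(L'', \varphi'')$ follows by applying the general statement to $A' = A_{L'}$, $A'' = A_{L''}$, noting that $A_{L'} A_{L''} = A_{L'L''}$ as componentwise products (Example~\ref{ex-ring-sections-G-alg}(4) together with the definition of $A_L$).

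The main obstacle — really the only point requiring care — is making sure the two ``ingredients'' genuinely combine at a fixed level $k$, i.e. that the levelwise-addition operation $\oplus_t$ interacts correctly with both the projection to the $\widehat{S}$-coordinates and the string-polytope fibers simultaneously. Since the level $k$ is the same throughout and both the $\widehat{S}$-inclusion and the string-cone inclusion are stated at matching levels, this is automatic, but it is worth spelling out that the point $(k, a'+a'', b'+b'')$ lies at level $k$ of the levelwise sum and that the string-polytope condition only involves $\lambda(a)$, which depends on $a$ alone. There is no issue of strong non-negativity here (that was handled in Proposition~\ref{prop-S-tilde-dim-lambda}), and no finiteness or generation hypothesis is needed, so the argument is essentially formal once the three prior facts are assembled.
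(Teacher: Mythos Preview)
Your proof is correct and follows the same approach as the paper: the paper's proof simply says it ``follows immediately from the superadditivity of the string polytope (the paragraph after Definition~\ref{def-string-polytope}) and the superadditivity of the multiplicity body (Proposition~\ref{prop-hat-Delta_G-superadd}),'' and you have unpacked exactly how those two ingredients combine at the level of the semigroups $\widetilde{S}$. Your version is more explicit but the content is identical.
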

\begin{proof}
It follows immediately from the superadditivity of the string polytope (the paragraph after Definition \ref{def-string-polytope})
and the superadditivity of the multiplicity body
(Proposition \ref{prop-hat-Delta_G-superadd}).
\end{proof}


{Finally, analogously} to the weight lattice $\Lambda(X)$ and the multiplicity lattice $\widehat{\Lambda}(X)$,
we can associate a sublattice $\widetilde{\Lambda}(X)$ of $\z^{n+N}$ to the variety $X$ which is the largest
possible lattice that can appear as $\widetilde{\Lambda}(A)$ for a $G$-algebra $A$: Define the
lattice $\widetilde{\Lambda}(X)$ to be the lattice generated by all the $(a, b)$ where
$a \in \widehat{\Lambda}(X)$ {is such that $\lambda(a)$ is dominant, and $b \in \Delta_{\s}(\lambda(a)) \cap \z^N$.}
The next proposition follows from the definition and Proposition \ref{prop-S-hat-multi-lambda}(2) and (3).

\begin{Prop} \label{prop-tilde-Lambda-A_L}
1) Let $(A, \varphi) \in {\bf A}_G(X)$ be a $G$-algebra. Then the
lattice $\widetilde{\Lambda}(A)$ is contained in $\widetilde{\Lambda}(X)$.
2) Suppose $(L, \varphi) \in {\bf K}_G(X)$ is an invariant subspace such that the
Kodaira map $\Phi_L$ is a birational isomorphism between $X$ and its image. Then
$\widetilde{\Lambda}(A_L)$ coincides with $\widetilde{\Lambda}(X)$.
\end{Prop}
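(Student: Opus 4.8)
The plan is to reduce both statements to the already-established facts about the multiplicity lattice (Proposition \ref{prop-S-hat-multi-lambda}(2), (3)) together with one extra structural input: the integral points of the string cone $\mathcal{C}_{\s}$ form a semigroup, i.e. $\mathcal{C}_{\s}$ is closed under addition. Throughout I identify $\widetilde{\Lambda}(A)$ with the lattice $\Lambda(\widetilde{S}(A))$ associated to the semigroup $\widetilde{S}(A)$, exactly as $\widehat{\Lambda}(A) = \Lambda(\widehat{S}(A))$.

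For part 1 I would just unravel the definitions. An element of $\widetilde{S}_G(A)$ at level $k$ has the form $(k, a, b)$ where, by Lemma \ref{lem-valuation-B-weight}(2), $a = v(f)$ for a $B$-eigenfunction $f \in L_k^U \setminus \{0\}$ of dominant weight $\lambda(a)$, and $b \in \Delta_{\s}(\lambda(a)) \cap \z^N$. Since $f \in \c(X)^U \setminus \{0\}$ we get $a \in \widehat{\Lambda}(X)$ with $\lambda(a)$ dominant, so $(a,b)$ is literally one of the generators listed in the definition of $\widetilde{\Lambda}(X)$. Now $\widetilde{S}(A)$ is a strongly non-negative semigroup (Proposition \ref{prop-S-tilde-dim-lambda}(3)), so every element of $\Lambda(\widetilde{S}(A))$ is a difference $(0, a-a', b-b')$ of two elements of $\widetilde{S}(A)$ at a common level; such a difference of generators of $\widetilde{\Lambda}(X)$ again lies in $\widetilde{\Lambda}(X)$, whence $\widetilde{\Lambda}(A) \subseteq \widetilde{\Lambda}(X)$.

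For part 2 the inclusion just proved gives $\widetilde{\Lambda}(A_L) \subseteq \widetilde{\Lambda}(X)$, so it remains to realize each generator $(a,b)$ of $\widetilde{\Lambda}(X)$ inside $\widetilde{\Lambda}(A_L)$. Fix such a generator: $a \in \widehat{\Lambda}(X)$ with $\lambda := \lambda(a)$ dominant and $b \in \Delta_{\s}(\lambda) \cap \z^N$. By Proposition \ref{prop-S-hat-multi-lambda}(3), $\widehat{\Lambda}(A_L) = \widehat{\Lambda}(X)$, and since $\widehat{S}(A_L)$ is strongly non-negative we may write $a = a_1 - a_2$ with $(k, a_1), (k, a_2) \in \widehat{S}(A_L)$ at a common level $k$; additivity of $\widehat{\pi}$ gives $\mu_1 - \mu_2 = \lambda$ for the dominant weights $\mu_i := \lambda(a_i)$. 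Choose any $b_2 \in \Delta_{\s}(\mu_2) \cap \z^N$ (nonempty because $\mu_2$ is dominant) and set $b_1 := b + b_2$. The points $(\lambda, b)$ and $(\mu_2, b_2)$ lie in $\mathcal{C}_{\s}$, hence so does their sum $(\mu_1, b_1)$, which is again an integral point; as $\mu_1$ is dominant this says $b_1 \in \Delta_{\s}(\mu_1) \cap \z^N$. Therefore $(k, a_i, b_i) \in \widetilde{S}(A_L)$ for $i=1,2$, and their difference $(0, a, b)$ lies in $\widetilde{\Lambda}(A_L)$. Since the $(a,b)$ of this shape generate $\widetilde{\Lambda}(X)$, we get $\widetilde{\Lambda}(X) \subseteq \widetilde{\Lambda}(A_L)$, and combined with part 1 this yields equality.

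The only step that is not pure bookkeeping is the lift in part 2 from the identity $a = a_1 - a_2$ of multiplicity vectors to an identity $(a, b) = (a_1, b_1) - (a_2, b_2)$ of points of $\widetilde{S}(A_L)$. This is exactly where additivity of the string cone is used: it lets us correct $b$ by an arbitrary integral point of $\Delta_{\s}(\mu_2)$ and still land in the slice $\Delta_{\s}(\mu_1)$. I expect this to be the main (indeed essentially the only) obstacle; the remaining care is to keep track that $\widehat{S}(A_L)$ and $\widetilde{S}(A_L)$ are strongly non-negative (Propositions \ref{prop-S-hat-multi-lambda}(1) and \ref{prop-S-tilde-dim-lambda}(3)), so that ``element of the associated lattice'' legitimately means ``difference of two elements at a common level.''
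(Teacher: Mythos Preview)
Your argument is correct and is precisely the intended one: the paper states only that the proposition ``follows from the definition and Proposition \ref{prop-S-hat-multi-lambda}(2) and (3)'' without further detail, and your writeup supplies exactly those details, including the use of the additivity of the string cone $\mathcal{C}_{\s}$ (recorded after Definition \ref{def-string-polytope}) to lift the decomposition $a = a_1 - a_2$ to one of $(a,b)$. There is nothing to add.
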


\section{Self-intersection index of invariant subspaces} \label{sec-int-index-G}
In this section we give formulae for the growth of the Hilbert function of
a graded $G$-algebra in terms of the convex bodies $\Delta(A)$, $\widehat{\Delta}(A)$ and
$\widetilde{\Delta}(A)$. When $A = A_L$, for an invariant subspace $(L, \varphi) \in {\bf K}_G(X)$,
this implies formulae for the self-intersection index of $L$.

\subsection{Formulae for the growth of Hilbert function and self-intersection index} \label{subsec-int-index-G}
Let $X$ be an irreducible $G$-variety of dimension $n$.
Let $(A, \varphi) \in {\bf A}_G(X)$ be a graded algebra of almost $G$-integral type,
and let $H_A$ denote the Hilbert function of $A$.

We start with the largest of the three convex bodies namely $\widetilde{\Delta}$.
Let $\widetilde{S}_k$ denote the points at level $k$ in the semigroup $\widetilde{S}$.
As in Section \ref{subsec-string-semigroup-G} we have $\dim L_k = \# \widetilde{S}_k.$
Applying Theorem \ref{th-asymp-H_S-vol-Delta} to the semigroup $\widetilde{S}$ we obtain:

\begin{Th} \label{th-Hilbert-growth-G-alg-tilde-S}
1) The growth degree $q$ is equal to the dimension of the convex body $\widetilde{\Delta}$.
2) The $q$-th growth coefficient $a_q = \lim_{k \to \infty} H_A(k)/k^q$
is equal to $\Vol_q(\widetilde{\Delta})$ where the
volume is normalized with respect to the lattice $\widetilde{\Lambda}(A)$ associated to the semigroup
$\widetilde{S}$.
\end{Th}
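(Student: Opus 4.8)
The plan is to deduce this theorem directly from the general asymptotic result for strongly non-negative semigroups, Theorem~\ref{th-asymp-H_S-vol-Delta}, applied to the string semigroup $\widetilde{S} = \widetilde{S}_G(A)$. First I would recall from Section~\ref{subsec-string-semigroup-G} the crucial identification $\dim L_k = \#\widetilde{S}_k$: by Proposition~\ref{prop-S-tilde-dim-lambda}, for each $(k,\lambda) \in S(A)$ the fibre $(\widehat\pi \circ \widetilde\pi)^{-1}(k,\lambda)$ has exactly $\dim L_{k,\lambda}$ points (the multiplicity $m_{k,\lambda}$ times $\dim V_\lambda$), and summing over all highest weights $\lambda$ appearing in $L_k$ gives $\#\widetilde{S}_k = \sum_\lambda \dim L_{k,\lambda} = \dim L_k = H_A(k)$. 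Hence the Hilbert function of $A$ literally coincides with the Hilbert function $H_{\widetilde S}$ of the semigroup $\widetilde{S}$.

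Next I would check the hypothesis that makes Theorem~\ref{th-asymp-H_S-vol-Delta} applicable, namely that $\widetilde{S}$ is \emph{strongly non-negative}. This is exactly Proposition~\ref{prop-S-tilde-dim-lambda}(3), which uses two inputs: that $\widehat{S}(A)$ is strongly non-negative (Proposition~\ref{prop-S-hat-multi-lambda}(1), valid since $A$ is of almost $G$-integral type) and that the Littelmann string cone $\mathcal{C}_{\s}$ is strongly convex, so that the string polytopes $\Delta_{\s}(\lambda(a))$ add only finitely many bounded directions and the cone $C(\widetilde S)$ still meets $\pi^{-1}(0)$ only at the origin. With strong non-negativity in hand, Theorem~\ref{th-asymp-H_S-vol-Delta}(1) immediately yields that $H_{\widetilde S}(k) = H_A(k)$ grows like $a_q k^q$ with $q = \dim \widetilde{\Delta}(S) = \dim \widetilde{\Delta}(A)$ and growth coefficient $a_q = \lim_{k\to\infty} H_A(k)/k^q = \Vol_q(\widetilde\Delta(A))$, where the volume is normalized with respect to the lattice $\Lambda(\widetilde S)$, which is precisely $\widetilde{\Lambda}(A)$ by definition. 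This proves both (1) and (2).

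The only genuinely substantive point — and thus the step I would be most careful about — is the first one: verifying that the combinatorial count $\#\widetilde{S}_k$ really reproduces $\dim L_k$ on the nose, with no overcounting or undercounting. This rests on Proposition~\ref{prop-S-tilde-dim-lambda}(2)--(3), which in turn rests on two facts that should be spelled out: the lattice-point count $\#(\Delta_{\s}(\lambda)\cap\z^N) = \dim V_\lambda$ from equation~\eqref{equ-dimV_lambda}, valid for every dominant weight $\lambda = \lambda(a)$ occurring; and the bijectivity of the string parametrization $\iota_{\s}$, which guarantees that distinct basis elements give distinct lattice points and hence that the fibre over $(k,a)$ has \emph{exactly} $\dim V_{\lambda(a)}$ points. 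Everything after that is a formal invocation of Theorem~\ref{th-asymp-H_S-vol-Delta}, so the proof is genuinely short; the content lives in the structural setup of $\widetilde{S}(A)$ already established in the previous section.
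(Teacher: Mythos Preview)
Your proposal is correct and follows essentially the same route as the paper: the paper simply notes (in the sentence immediately preceding the theorem) that $\dim L_k = \#\widetilde{S}_k$ from Section~\ref{subsec-string-semigroup-G} and then invokes Theorem~\ref{th-asymp-H_S-vol-Delta} for the strongly non-negative semigroup $\widetilde{S}$. Your write-up is more explicit about why the level-count equals $\dim L_k$ and why $\widetilde{S}$ is strongly non-negative, but the argument is the same.
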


Let $\lambda \in \Lambda^+$ be a dominant weight. By the Weyl dimension formula we have
$$F(\lambda) = \dim V_\lambda = \prod_{\alpha \in R^+}
\langle \lambda+\rho, \alpha \rangle / \langle \rho, \alpha
\rangle,$$
where $R^+$ is the set of
positive roots and $\rho$ is half the sum of positive roots.
Consider $F$ as a function on the lattice of weights $\Lambda(A)$ (consisting of
all the differences $\lambda-\mu$ for all $k>0$ and $\lambda, \mu \in S_k$).
Then the homogeneous component of the highest degree of $F$, as a function on the subspace
$\Lambda_\r(A)$ spanned by the lattice $\Lambda(A)$, is given by:
\begin{equation} \label{equ-f}
f(\lambda) = \prod_{\alpha \in R^+ \setminus E}
\frac{\langle \lambda, \alpha \rangle}{\langle \rho,
\alpha\rangle}.
\end{equation}
Here $E$ is the set of positive roots which are orthogonal to the moment polytope $\Delta$.

Let $\widehat{\pi}^*F$ and $\widehat{\pi}^*f$ denote the pull-backs of
$F$ and $f$ to the semigroup $\widehat{S}$ via the projection $\widehat{\pi}: \widehat{S} \to S$ respectively.
Applying Theorem \ref{th-asymp-H_S-vol-Delta} to the semigroup $\widehat{S}$ and the polynomial $\widehat{\pi}^*F$ we get:

\begin{Cor} \label{cor-Hilbert-growth-G-alg-hat-S}
The $q$-th growth coefficient $a_q$ of the Hilbert function $H_A$ is equal to the
integral $$\int_{\widehat{\Delta}} \widehat{\pi}^*f d\widehat{\gamma},$$
where $d\widehat{\gamma}$ is the Lebesgue measure
on the real span of the convex body $\widehat{\Delta}$ normalized with respect to the lattice
$\widehat{\Lambda}(A)$.
\end{Cor}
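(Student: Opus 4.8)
The plan is to deduce the statement directly from Theorem~\ref{th-asymp-H_S-vol-Delta}(2), applied to the multiplicity semigroup $\widehat{S} = \widehat{S}(A)$ and the polynomial $\widehat{\pi}^*F$. First I would rewrite the Hilbert function of $A$ as a point count on $\widehat{S}$ weighted by $\widehat{\pi}^*F$. Decomposing $L_k = \bigoplus_{\lambda \in \Lambda^+} L_{k,\lambda}$ into isotypic components and using the Weyl dimension formula gives
$$H_A(k) = \dim L_k = \sum_{\lambda \in \Lambda^+} m_{k,\lambda}\dim V_\lambda = \sum_{(k,\lambda) \in S_k} m_{k,\lambda}\, F(\lambda).$$
By Proposition~\ref{prop-hat-pi-multiplicity} the multiplicity $m_{k,\lambda}$ equals the number of points of $\widehat{S}$ in the fibre $\widehat{\pi}^{-1}(k,\lambda)$, and on that fibre $F(\lambda) = F(\lambda(a)) = (\widehat{\pi}^*F)(k,a)$; moreover $\widehat{\pi}^*F$ depends only on the second coordinate, since $\widehat{\pi}$ preserves levels and the weight $\lambda(a)$ is determined by $a$ alone (see the proof of Lemma~\ref{lem-valuation-B-weight}(1)). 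Hence
$$H_A(k) = \sum_{(k,a) \in \widehat{S}_k} (\widehat{\pi}^*F)(a).$$

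Next I would apply Theorem~\ref{th-asymp-H_S-vol-Delta}(2) to $\widehat{S}$ --- which is strongly non-negative by Proposition~\ref{prop-S-hat-multi-lambda}(1) --- with the polynomial $g = \widehat{\pi}^*F$ on $\r^n$. Writing $\widehat{p} = \dim\widehat{\Delta}$ and $d = \deg g$, this yields
$$\lim_{k \to \infty}\frac{\sum_{(k,a)\in\widehat{S}_k} g(a)}{k^{\widehat{p}+d}} = \int_{\widehat{\Delta}} g^{(d)}\, d\widehat{\gamma},$$
where $g^{(d)}$ is the leading homogeneous part of $g$ and $d\widehat{\gamma}$ is the Lebesgue measure on the span of $\widehat{\Delta}$ normalized with respect to $\widehat{\Lambda}(A) = \Lambda(\widehat{S})$. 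Together with the first step this shows that $H_A(k)$ grows like $k^{\widehat{p}+d}$ with leading coefficient $\int_{\widehat{\Delta}} g^{(d)}\,d\widehat{\gamma}$; so $q = \widehat{p}+d$ and $a_q = \int_{\widehat{\Delta}} g^{(d)}\,d\widehat{\gamma}$, provided this integral is nonzero.

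It then remains to identify $g^{(d)}$ with $\widehat{\pi}^*f$, and this is the only real step. Since $\widehat{\pi}$ extends to a linear map surjective onto $L(S)$ and, after passing to directions, maps $\widehat{\Delta}$ onto $\Delta$, pulling back along $\widehat{\pi}$ preserves the degree of a polynomial and carries its leading homogeneous part to the leading homogeneous part of the pull-back. Thus it suffices to check that $f$ of (\ref{equ-f}) is the leading homogeneous part of $F$ when $F$ is restricted to the affine hull of the moment body $\Delta$ (equivalently, read off on the subspace $\Lambda_\r(A)$). For each positive root $\alpha$, the factor $\langle \lambda+\rho, \alpha\rangle$ of $F$ is an affine function on this affine hull whose linear part is $\langle \lambda,\alpha\rangle$; this linear part vanishes precisely when $\alpha$ is orthogonal to $\Delta$, i.e. $\alpha \in E$. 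Hence the restriction of $F$ has degree $d = |R^+ \setminus E|$ with leading part $\prod_{\alpha \in R^+\setminus E}\langle\lambda,\alpha\rangle/\langle\rho,\alpha\rangle = f(\lambda)$, so $g^{(d)} = \widehat{\pi}^*f$. Finally, $\int_{\widehat{\Delta}}\widehat{\pi}^*f\, d\widehat{\gamma} > 0$ because $\widehat{\pi}^*f \geq 0$ on $\widehat{\Delta}$ and is strictly positive on the preimage of the relative interior of $\Delta$ (where each $\langle\lambda,\alpha\rangle$ with $\alpha\notin E$ is positive), a subset of full dimension $\widehat{p}$; this justifies dividing by $k^{\widehat{p}+d}$ and confirms $q = \widehat{p}+d$, finishing the proof.
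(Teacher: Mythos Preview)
Your argument is exactly the route the paper sketches: rewrite $H_A(k)$ as $\sum_{(k,a)\in\widehat{S}_k}(\widehat{\pi}^*F)(a)$ via Proposition~\ref{prop-hat-pi-multiplicity} and the Weyl dimension formula, then invoke Theorem~\ref{th-asymp-H_S-vol-Delta}(2) for the strongly non-negative semigroup $\widehat{S}$ and the polynomial $\widehat{\pi}^*F$. Your added verification that the leading part of $\widehat{\pi}^*F$ is $\widehat{\pi}^*f$ and that the resulting integral is strictly positive (so that $q=\widehat p+d$ really is the growth degree) goes beyond the paper's one-line justification and is a useful addition.

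One small point deserves care. Your claim that ``$\lambda(a)$ is determined by $a$ alone'' is not actually what Lemma~\ref{lem-valuation-B-weight}(1) gives: that lemma compares $B$-eigenfunctions at the \emph{same} level $k$, and the twisted $B$-action on $L_k$ depends on $k$ through $\varphi^k$. In general the linear extension of $\widehat{\pi}$ has the form $(k,a)\mapsto(k,\,kc+T(a))$ with $c\in\Lambda_\r$ possibly nonzero, so $\widehat{\pi}^*F$ may depend on $k$ as well. This does not damage the conclusion --- Theorem~\ref{th-asymp-H_S-vol-Delta}(2) extends immediately to polynomials in $(k,x)$ (for $h$ homogeneous of degree $d$ one has $h(k,x)=k^{d}h(1,x/k)$, and the Riemann-sum argument goes through with the integrand $h(1,x)$ on the level-$1$ slice) --- and the paper is equally informal here; but the justification you cite should be adjusted.
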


Finally we can give a formula for the growth coefficient $a_q$ as an integral
over the moment convex body $\Delta$. Recall that $d\mu$ denotes the Duistermaat-Heckman measure
for the $G$-algebra $A$.

\begin{Cor} \label{cor-Hilbert-growth-G-alg-S}
The $q$-th growth coefficient $a_q$ of the Hilbert function $H_A$ is equal to the
integral $$\int_{\Delta} f d\mu.$$
\end{Cor}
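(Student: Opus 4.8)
The plan is to deduce Corollary \ref{cor-Hilbert-growth-G-alg-S} from Corollary \ref{cor-Hilbert-growth-G-alg-hat-S} by pushing the integral forward along the projection $\widehat{\pi}: \widehat{\Delta} \to \Delta$. The key observation is that the integrand $\widehat{\pi}^* f$ appearing in Corollary \ref{cor-Hilbert-growth-G-alg-hat-S} is, by definition, constant along the fibres of $\widehat{\pi}$: it is the pull-back of the function $f$ defined on $\Lambda_\r(A) \supset \Delta$. Therefore integrating $\widehat{\pi}^* f$ against $d\widehat{\gamma}$ over $\widehat{\Delta}$ is the same as integrating $f$ against the push-forward measure $\widehat{\pi}_*(d\widehat{\gamma})$ over $\Delta$.

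First I would recall that, by the theorem in Section \ref{subsec-Duistermaat} (the one applying Theorem \ref{th-D-H-semigroup} to the pair $(S(A), \widehat{S}(A))$ and the linear map $\widehat{\pi}$), the Duistermaat-Heckman measure $d\mu_A$ is \emph{defined} to be precisely the push-forward under $\widehat{\pi}$ of the normalized Lebesgue measure $d\widehat{\gamma}$ on $\widehat{\Delta}(A)$ to the moment body $\Delta(A)$. So the identity $\widehat{\pi}_*(d\widehat{\gamma}) = d\mu_A$ is not something to prove but something already in hand. Then the change-of-variables formula for push-forward measures gives
\begin{equation*}
\int_{\widehat{\Delta}} \widehat{\pi}^* f \, d\widehat{\gamma} = \int_{\Delta} f \, d\big(\widehat{\pi}_* d\widehat{\gamma}\big) = \int_{\Delta} f \, d\mu,
\end{equation*}
and combining this with Corollary \ref{cor-Hilbert-growth-G-alg-hat-S} yields the claimed formula $a_q = \int_{\Delta} f \, d\mu$.

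The only point requiring a word of care is the compatibility of lattice normalizations. In Corollary \ref{cor-Hilbert-growth-G-alg-hat-S} the measure $d\widehat{\gamma}$ is normalized with respect to $\widehat{\Lambda}(A)$, while $d\mu_A$ in Section \ref{subsec-Duistermaat} is described as the push-forward of the same normalized Lebesgue measure on $\widehat{\Delta}(A)$; since both refer to the normalization by $\widehat{\Lambda}(A) = \Lambda(\widehat{S}(A))$, these agree and there is no discrepancy. I expect this normalization bookkeeping to be the main (and essentially only) obstacle, and it is mild: everything has been set up in the semigroup framework of Section \ref{subsec-D-H-semigroup} so that the normalizations match by construction. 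Hence the proof is a short two-line deduction chaining Corollary \ref{cor-Hilbert-growth-G-alg-hat-S} with the defining property of $d\mu_A$.
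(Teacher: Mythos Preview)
Your proposal is correct and is exactly the argument the paper has in mind: the corollary is stated without proof precisely because it follows immediately from Corollary~\ref{cor-Hilbert-growth-G-alg-hat-S} together with the fact, established in Section~\ref{subsec-Duistermaat}, that $d\mu_A$ is the push-forward of $d\widehat{\gamma}$ under $\widehat{\pi}$. Your remark on the normalization bookkeeping is accurate and the deduction is as short as you say.
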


Using the above results we can get formulae for the self-intersection number of an
invariant subspace.
Let $(L, \varphi) \in {\bf K}_G(X)$ be an invariant subspace of rational functions on $X$.
Let $A_L = \bigoplus_k L^k$ be the algebra associated to $L$. Also, as in Example \ref{ex-ring-sections-G-alg}(2),
let $A = \ol{A_L}$ denote the integral closure of this algebra regarded as a subalgebra of $F[t]$.
As above, let $\Delta$, $\widehat{\Delta}$ and $\widetilde{\Delta}$ denote respectively the moment body, multiplicity body and string body
of the algebra $A$. Note that since $A_L$ is finitely generated, $A$ is also finitely generated and hence
$\Delta$ is a polytope.

From Hilbert's theorem on the dimension and degree of a projective variety, it follows that the self-intersection
index $[L, \ldots, L]$ is equal to $n!$ times the coefficient of degree $n$ in the Hilbert polynomial of
the algebra $A$ (see \cite[Section 4]{Askold-Kiumars-Newton-Okounkov}).

Suppose {the Kodaira map} $\Phi_L$ has finite mapping degree, that is, $\dim(Y_L) = n$. Then one can show that
for any large enough $k$, the Kodaira map $\Phi_{L_k}$ is a birational isomorphism, where $L_k$ denotes the $k$-th homogeneous
piece of $A$ (see \cite[Section 4.3]{Askold-Kiumars-Newton-Okounkov}). From this it follows that
the lattices corresponding to the algebra $A$ are the largest possible, that is, $\Lambda(A) = \Lambda(X)$,
$\widehat{\Lambda}(A) = \widehat{\Lambda}(X)$ and $\widetilde{\Lambda}(A) = \widetilde{\Lambda}(X)$. Putting these together we obtain the following:


\begin{Th} \label{th-self-int-index-G-subspace-vol}
We have the following formula for the self-intersection index of an invariant subspace $(L, \varphi)$:
$$[L, \ldots, L] = n! \Vol_n(\widetilde{\Delta}),$$
where $\Vol_n$ is the Lebesgue measure in $\widetilde{\Lambda}_\r(X) = \widetilde{\Lambda}(X) \otimes \r$
normalized with respect to the lattice $\widetilde{\Lambda}(X)$.
\end{Th}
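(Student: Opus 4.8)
The plan is to assemble the theorem from pieces already in place, chaining the three growth-coefficient formulas together with the facts about the Kodaira map that force the lattices $\widetilde\Lambda(A)$, $\widehat\Lambda(A)$, $\Lambda(A)$ to be as large as possible. Concretely, I would start from the observation — cited in the excerpt — that $[L,\ldots,L]$ equals $n!$ times the leading coefficient of the Hilbert polynomial of $A=\ol{A_L}$ (Hilbert's theorem on degree, together with the remark that $A$ is finitely generated so the Hilbert function is eventually polynomial). The first step is therefore to argue that the growth degree $q$ of $H_A$ is exactly $n$. This uses the hypothesis $\dim(Y_L)=n$: since $\Phi_L$ has finite mapping degree, the projective variety $Y_L$ has dimension $n$, its homogeneous coordinate ring is (up to integral closure) $A$, and hence $H_A(k)$ grows like $a_n k^n$. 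By Theorem \ref{th-Hilbert-growth-G-alg-tilde-S}(1), $q=\dim\widetilde\Delta$, so $\dim\widetilde\Delta=n$.

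Next I would invoke Theorem \ref{th-Hilbert-growth-G-alg-tilde-S}(2), which says $a_q=\Vol_q(\widetilde\Delta)$ with the volume normalized with respect to $\widetilde\Lambda(A)$. Combining this with $[L,\ldots,L]=n!\,a_n$ gives $[L,\ldots,L]=n!\,\Vol_n(\widetilde\Delta)$, but with the lattice $\widetilde\Lambda(A)$ rather than $\widetilde\Lambda(X)$. So the remaining point is to identify these two lattices. Here I would use the argument sketched just before the theorem: because $\dim(Y_L)=n$, for all sufficiently large $k$ the Kodaira map $\Phi_{L_k}$ of the $k$-th homogeneous piece is a birational isomorphism onto its image (this is quoted from \cite[Section 4.3]{Askold-Kiumars-Newton-Okounkov}). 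Applying Proposition \ref{prop-Lambda(A)}(2), Proposition \ref{prop-S-hat-multi-lambda}(3), and Proposition \ref{prop-tilde-Lambda-A_L}(2) to $L_k$ then yields $\Lambda(A)=\Lambda(X)$, $\widehat\Lambda(A)=\widehat\Lambda(X)$, and $\widetilde\Lambda(A)=\widetilde\Lambda(X)$, since the lattices attached to $A$ contain (indeed equal) those attached to $A_{L_k}$ for large $k$ and are in any case sandwiched between these and the maximal lattices of $X$. Substituting $\widetilde\Lambda(A)=\widetilde\Lambda(X)$ into the volume normalization finishes the proof.

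I expect **the main subtlety to lie in the lattice identification**, not in the volume asymptotics, which are a direct citation of Theorem \ref{th-asymp-H_S-vol-Delta}. One has to be a little careful that passing from $A_L$ to its integral closure $A=\ol{A_L}$ does not change the relevant data: the Kodaira map of $L$ and of $A$ in low degrees may differ, but the birationality of $\Phi_{L_k}$ for large $k$ is what matters, and the integral closure operation preserves the subspaces $L_k$ for large $k$ up to the finitely many exceptional degrees, hence does not affect the asymptotic semigroups $S$, $\widehat S$, $\widetilde S$ or their associated lattices. A second point to handle cleanly is the compatibility of the three normalizations when we chain Theorem \ref{th-Hilbert-growth-G-alg-tilde-S} with Corollaries \ref{cor-Hilbert-growth-G-alg-hat-S} and \ref{cor-Hilbert-growth-G-alg-S}: although the final statement is phrased only in terms of $\widetilde\Delta$, consistency of $a_q$ across the three formulas is automatic once the three lattices are known to be the maximal ones, and I would remark on this rather than re-derive it. With these observations in hand the proof is short: it is essentially "collect Theorem \ref{th-Hilbert-growth-G-alg-tilde-S}, the finite-mapping-degree reduction, and Propositions \ref{prop-Lambda(A)}(2), \ref{prop-S-hat-multi-lambda}(3), \ref{prop-tilde-Lambda-A_L}(2)."
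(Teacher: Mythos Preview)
Your proposal is correct and follows essentially the same route as the paper: the paper's proof is a one-line citation of Theorem \ref{th-Hilbert-growth-G-alg-tilde-S} together with a reference to \cite[Theorem 4.12]{Askold-Kiumars-Newton-Okounkov}, while the lattice identification $\widetilde{\Lambda}(A)=\widetilde{\Lambda}(X)$ via birationality of $\Phi_{L_k}$ for large $k$ is already set up in the paragraph immediately preceding the theorem. Your write-up simply unpacks these citations in the expected way.
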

\begin{proof}
It follows from Theorem \ref{th-Hilbert-growth-G-alg-tilde-S}, as in the proof of \cite[Theorem 4.12]{Askold-Kiumars-Newton-Okounkov}.
\end{proof}

\begin{Cor} \label{cor-self-int-index-G-subspace-integ}
\noindent 1)
$$[L, \ldots, L] = n!\int_{\widehat{\Delta}} \widehat{\pi}^*f d\widehat{\gamma},$$
where the measure $d\widehat{\gamma}$ is the Lebesgue measure in $\widehat{\Lambda}_\r(X)$
normalized with respect to the lattice $\widehat{\Lambda}(X)$.\\

\noindent 2)
$$[L, \ldots, L] = n! \int_{\Delta} f d\mu,$$
where the measure $d\mu$ is the Duistermaat-Heckman measure of the algebra $\ol{A_L}$.\\
\end{Cor}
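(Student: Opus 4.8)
The plan is to derive both identities by assembling results already established, all applied to the integral closure $A = \ol{A_L}$: on one side the Hilbert-growth formulas of Corollary \ref{cor-Hilbert-growth-G-alg-hat-S} and Corollary \ref{cor-Hilbert-growth-G-alg-S}, and on the other the identification of $[L, \ldots, L]$ with $n!$ times the leading growth coefficient $a_q$ of the Hilbert function $H_A$ (Hilbert's theorem on the Hilbert polynomial of a projective variety, equivalently Theorem \ref{th-self-int-index-G-subspace-vol} together with $\Vol_n(\widetilde{\Delta}) = a_q$). First I would place us in the setting of the paragraph preceding Theorem \ref{th-self-int-index-G-subspace-vol}, in which $\Phi_L$ has finite mapping degree, i.e. $\dim Y_L = n$; then $\Phi_{L_k}$ is a birational isomorphism onto its image for all large $k$, so the algebra $A = \ol{A_L}$ has the maximal possible lattices, $\Lambda(A) = \Lambda(X)$, $\widehat{\Lambda}(A) = \widehat{\Lambda}(X)$, $\widetilde{\Lambda}(A) = \widetilde{\Lambda}(X)$, and the growth degree $q$ of $H_A$ equals $n$.

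With $q = n$ in hand, parts 1) and 2) are a short substitution. Corollary \ref{cor-Hilbert-growth-G-alg-hat-S} gives $a_n = \int_{\widehat{\Delta}} \widehat{\pi}^* f\, d\widehat{\gamma}$ with the normalization taken relative to $\widehat{\Lambda}(A)$; since $\widehat{\Lambda}(A) = \widehat{\Lambda}(X)$ this is exactly the normalization in the statement, and multiplying by $n!$ yields part 1). Similarly Corollary \ref{cor-Hilbert-growth-G-alg-S} gives $a_n = \int_{\Delta} f\, d\mu$, where $d\mu$ is the Duistermaat-Heckman measure of $A = \ol{A_L}$, and this is part 2). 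Along the way I would recall why $f$, the top-degree homogeneous part of the Weyl dimension polynomial $F$ restricted to $\Lambda_\r(A) = \Lambda_\r(X)$, is the correct integrand: for dominant $\lambda$ one has $F(\lambda) = \dim V_\lambda$, so $\sum_{(k,a) \in \widehat{S}_k} \widehat{\pi}^* F(k,a) = \sum_{\lambda} m_{k,\lambda} \dim V_\lambda = \dim L_k = H_A(k)$, and Theorem \ref{th-asymp-H_S-vol-Delta}(2) applied to $\widehat{S}$ and the polynomial $\widehat{\pi}^* F$, whose degree $\deg f = |R^+ \setminus E|$ equals $n - \dim \widehat{\Delta}$, converts this sum into the stated integral.

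The hard part here is not analytic --- all the delicate asymptotics are packaged inside Theorem \ref{th-asymp-H_S-vol-Delta} and the constructions of the bodies $\widehat{\Delta}$ and $\widetilde{\Delta}$ --- but rather the bookkeeping that lets the hypothesis $\dim Y_L = n$ do its work: one must check that replacing $A_L$ by $\ol{A_L}$ leaves the Hilbert polynomial unchanged (only finitely many homogeneous components differ), that $\Phi_{L_k}$ is birational for all large $k$ so Proposition \ref{prop-S-hat-multi-lambda}(3) identifies $\widehat{\Lambda}(A_{L_k})$, hence $\widehat{\Lambda}(A)$, with $\widehat{\Lambda}(X)$, and the analogous matching for the moment and string lattices. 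Once these identifications are in place, parts 1) and 2) follow at once from Corollaries \ref{cor-Hilbert-growth-G-alg-hat-S} and \ref{cor-Hilbert-growth-G-alg-S}.
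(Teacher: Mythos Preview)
Your proposal is correct and follows essentially the same route as the paper: the paper's proof is the one-liner ``Follows directly from Theorem \ref{th-self-int-index-G-subspace-vol},'' which amounts to combining $[L,\ldots,L] = n!\,a_n$ with the alternative expressions for $a_q$ in Corollaries \ref{cor-Hilbert-growth-G-alg-hat-S} and \ref{cor-Hilbert-growth-G-alg-S}, exactly as you do. Your write-up simply unpacks the bookkeeping (the lattice identifications under the hypothesis $\dim Y_L = n$, and why $f$ is the right integrand) that the paper leaves implicit.
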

\begin{proof}
Follows directly from Theorem \ref{th-self-int-index-G-subspace-vol}.
\end{proof}

We have analogous statements for the self-intersection index of divisors on a projective
$G$-variety $X$.

\begin{Cor} \label{cor-self-int-index-divisor}
Let $X$ be a {normal} projective $G$-variety of dimension $n$ and let
$\mathcal{L}$ be a $G$-linearized very ample line bundle on $X$. Let $D$ be a
divisor of $\mathcal{L}$ and let $R(D)$ be the corresponding algebra of sections
regarded as a $G$-algebra (as in Example \ref{ex-ring-sections-G-alg}). Then the self-intersection index $D^n$ of the
divisor $D$ is equal to $$n!\int_{\Delta} fd\mu,$$ where $d\mu$ is the Duistermaat-Heckman measure.
\end{Cor}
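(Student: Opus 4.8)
The plan is to deduce the statement directly from Corollary~\ref{cor-self-int-index-G-subspace-integ}(2) by matching up the objects on the two sides. Fix a nonzero section $\tau \in H^0(X, \mathcal{L})$ with $(\tau) = D$ and set $L = L(D) = \{ f \in \c(X) \mid (f) + D > 0\}$. As in Example~\ref{ex-G-line-bundle-cocycle} and Example~\ref{ex-ring-sections-G-alg}(3), the cocycle $\varphi_g = (g \cdot \tau)/\tau$ makes $(L, \varphi) \in {\bf K}_G(X)$ an invariant subspace, and under $f \mapsto f\tau$ the $\varphi$-twisted $G$-action on $L$ corresponds to the $G$-action on $H^0(X, \mathcal{L})$. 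With this set-up the two things I need to check are: (a) the self-intersection index $[L, \ldots, L]$ of the invariant subspace $L$ equals the intersection number $D^n$; and (b) the graded $G$-algebra $R(D)$ is, via the degreewise maps $f \mapsto f\tau^k$, the same graded $G$-algebra as the integral closure $\ol{A_L}$ of the homogeneous coordinate ring $A_L$.

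For (a): since $\mathcal{L}$ is very ample, $L = L(D)$ is the complete linear system $|D|$ and the Kodaira map $\Phi_L \colon X \ratmap \p(L^*)$ is a closed embedding; in particular $\dim Y_L = n$ and $\Phi_L$ has mapping degree $1$. By the discussion preceding Definition~\ref{def-A_L}, $[L, \ldots, L]$ then equals the degree of $Y_L$ in $\p(L^*)$, which is the self-intersection number $D^n$ of the divisor class. This also verifies the hypothesis ($\dim Y_L = n$) under which Corollary~\ref{cor-self-int-index-G-subspace-integ}(2) is stated.

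For (b): this is the content of the last sentence of Example~\ref{ex-ring-sections-G-alg}(3). Since $X$ is normal and $\mathcal{L}$ very ample, the classical fact (see \cite[Chap. 2, Proof of Theorem 5.19]{Hartshorne}) identifies $R(D) = \bigoplus_{k \ge 0} L(kD)$ with $\ol{A_L}$ as graded algebras; one checks that this identification is $G$-equivariant because on both sides the $G$-action in degree $k$ is the one twisted by $\varphi^k$, arising from the single cocycle $\varphi_g = (g \cdot \tau)/\tau$. Consequently $R(D)$ and $\ol{A_L}$ have the same weight semigroup $S$, the same multiplicity semigroup $\widehat{S}$, the same moment polytope $\Delta$, and the same Duistermaat-Heckman measure $d\mu$.

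Granting (a) and (b), Corollary~\ref{cor-self-int-index-G-subspace-integ}(2) applied to $(L, \varphi)$ gives
$$ D^n = [L, \ldots, L] = n! \int_{\Delta} f \, d\mu, $$
which is exactly the assertion. The only step that is not a verbatim appeal to an earlier result is the $G$-equivariance in (b) --- that the isomorphism $R(D) \cong \ol{A_L}$ is compatible with the twisted actions --- and I expect this, rather than anything numerical, to be the point requiring a careful (but routine) argument; once it is in place, the equality of the associated convex bodies and Duistermaat-Heckman measures, and hence of the integrals, is automatic.
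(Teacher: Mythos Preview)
Your proposal is correct and follows exactly the route the paper intends: the paper gives no explicit proof for this corollary, treating it as an immediate consequence of Corollary~\ref{cor-self-int-index-G-subspace-integ}(2) via the identifications in Example~\ref{ex-ring-sections-G-alg}(3), and your argument spells out precisely those identifications (namely $[L,\ldots,L]=D^n$ from very ampleness and $R(D)\cong \ol{A_L}$ as graded $G$-algebras).
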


\subsection{Case of a spherical variety} \label{subsec-spherical}
A $G$-variety $X$ is called {\it spherical} if a Borel subgroup (and hence every Borel subgroup)
has a dense orbit. Some authors require that a spherical variety be normal. Here we do not
need the normality assumption.

\begin{Rem}
\noindent 1) When $G=T$ is a torus, spherical varieties are exactly toric varieties. \\

\noindent 2) By Bruhat decomposition, partial flag varieties $G/P$ are spherical.\\

\noindent 3) Again by Bruhat decomposition, $G$ is a spherical variety for the action of
$G \times G$ given by multiplication from left and right.\\
\end{Rem}

It is well-known that the spaces of sections of $G$-line bundles over spherical varieties are
multiplicity-free $G$-modules. The following is the analogous statement for the invariant
subspaces of rational functions.
\begin{Prop}
If $X$ is spherical and $(L, \varphi) \in \K$ then $L$ is a multiplicity-free $G$-module.
\end{Prop}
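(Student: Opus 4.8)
The plan is to reduce the statement to the well-known multiplicity-freeness of spaces of sections of line bundles on spherical varieties via the cocycle/twisting dictionary set up in Section~\ref{subsec-inv-subspace}. First I would recall that $L$ being $\varphi$-invariant means $L$ carries a $G$-action (the $\varphi$-twisted action), and the Kodaira map $\Phi_L \colon X \dashrightarrow \p(L^*)$ is $G$-equivariant by Proposition~\ref{prop-Kodaira-map}. Let $Y_L \subset \p(L^*)$ be the closure of the image; it is an irreducible $G$-invariant subvariety whose homogeneous coordinate ring is $A_L$ (Definition~\ref{def-A_L}), and in particular $L \cong (A_L)_1$ as a $G$-module. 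So it suffices to show that $(A_L)_1$ is multiplicity-free, and more generally that $A_L$ is a multiplicity-free $G$-algebra when $Y_L$ is spherical.

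The key observation is that $Y_L$ is itself spherical: since $X$ is spherical, a Borel subgroup $B$ has a dense orbit in $X$, and since $\Phi_L$ is $G$-equivariant and dominant onto $Y_L$, the image of that dense $B$-orbit is a dense $B$-orbit in $Y_L$. Thus $Y_L$ is a (possibly non-normal) spherical $G$-variety embedded $G$-equivariantly in a projective space. Now I would invoke the classical fact that the homogeneous coordinate ring of a spherical variety in a projectivized $G$-module is multiplicity-free: because $B$ has a dense orbit on $Y_L$, it has a dense orbit on the affine cone over $Y_L$ as well (the cone is a $G$-variety mapping onto $Y_L$ with one-dimensional generic fibre, and one checks $B$ still acts with a dense orbit), hence $B$ acts with a dense orbit on $\widehat{Y_L} = \Spec A_L$; equivalently $\c(\widehat{Y_L})^U$ has transcendence degree $\leq$ its natural bound so that each isotypic component $(A_L)_{k,\lambda}$ has $\dim (A_L)_{k,\lambda}^U \leq 1$. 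In particular every homogeneous piece, and $L = (A_L)_1$ in particular, is multiplicity-free.

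The main obstacle I anticipate is handling the non-normality: most textbook statements about sphericity and multiplicity-freeness of coordinate rings are phrased for normal varieties, so I need to either (a) note that the multiplicity-free conclusion for $A_L$ only uses the density of a $B$-orbit and not normality --- the standard argument, via the fact that $\dim L_{k,\lambda}^U$ equals the dimension of the space of $B$-semiinvariants of weight $\lambda$ in degree $k$, which is bounded by the number of $B$-orbits meeting the relevant fibre --- or (b) pass to the normalization $\widetilde{Y_L} \to Y_L$, which is again $G$-equivariant and spherical, observe that $A_L$ injects into the coordinate ring of a line bundle on $\widetilde{Y_L}$ (or is squeezed between it and a multiplicity-free algebra), and conclude by the known normal case. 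Approach (a) is cleaner and I would present that, citing the characterization in Remark~\ref{rem-S_T(A^U)} that multiplicity-freeness of $A_L$ as a $G$-algebra is the statement that $S_T(A_L^U) = S_G(A_L)$ has at most one point over each $(k,\lambda)$, which is controlled by $\dim (A_L)_{k,\lambda}^U$; this in turn is at most $1$ precisely when $B$ acts on the affine cone with a dense orbit, which follows from sphericity of $X$ as above.
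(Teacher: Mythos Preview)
Your approach is correct in outline but substantially more elaborate than the paper's, and it contains one imprecise step.

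The paper's proof is three lines and works directly on $X$: given two $B$-eigenvectors $f,g \in L$ of the same weight $\lambda$ for the $\varphi$-twisted action, the ratio $f/g \in \c(X)$ is $B$-invariant for the \emph{natural} (untwisted) action, since the cocycle factor $\varphi_b$ cancels. Sphericity of $X$ gives $\c(X)^B = \c$, so $f/g$ is constant and $\dim L_\lambda^U \leq 1$. No Kodaira map, no $Y_L$, no normality issues.

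Your route via $Y_L$ ultimately rediscovers this ratio argument, just on $Y_L$ instead of $X$; the detour through the Kodaira map and the homogeneous coordinate ring is unnecessary for the statement about the single subspace $L$. What your approach does buy is an explicit link to the classical fact about coordinate rings of projective spherical varieties, which is conceptually pleasant but not needed here.

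One genuine gap: your claim that ``$B$ acts with a dense orbit on $\widehat{Y_L}$'' does not follow from sphericity of $Y_L$. Sphericity of $Y_L$ gives a dense $B$-orbit on $Y_L$, equivalently a dense $(B \times \c^*)$-orbit on the cone (with $\c^*$ the grading torus), but not a dense $B$-orbit on the cone itself --- for instance, take $G$ acting trivially on a one-dimensional $L$. What you actually need, and what sphericity of $Y_L$ \emph{does} give, is that two degree-$k$ $B$-semiinvariants of the same weight in $A_L$ have ratio a $B$-invariant rational function on $Y_L$, hence a constant. That is precisely the paper's ratio trick, transplanted from $X$ to $Y_L$; once you see this, the Kodaira map machinery becomes superfluous.
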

\begin{proof} For a dominant weight $\lambda$ let $f, g \in L$ be two $B$-eigenvectors with
weight $\lambda$ in the $G$-module $L$. Then $f/g \in \c(X)$ is a $B$-invariant function,
for the natural (non-twisted) action of $G$ on $\c(X)$. But as $X$ is spherical it has a dense $B$-orbit
which then implies that $f/g$ is a constant function. Thus $f$ is a scalar multiple of $g$ which shows that
every highest weight representation $V_\lambda$ appears in $L$ with multiplicity at most $1$.
\end{proof}

Since invariant subspaces of functions over spherical varieties are always multiplicity-free,
we observe that, when $X$ is spherical, for any algebra $(A, \varphi) \in {\bf A}_G(X)$ the multiplicity
body $\widehat{\Delta}(A)$ coincides with the moment body $\Delta(X)$. In this case,
Theorem \ref{th-self-int-index-G-subspace-vol} and Corollary \ref{cor-self-int-index-G-subspace-integ}
get nicer and more explicit forms.

\begin{Cor}[Self-intersection index of invariant subspaces for spherical varieties] \label{cor-self-int-index-subspace-spherical}
Let $X$ be a spherical variety of dimension $n$ and let $(L, \varphi) \in \K$ be an
invariant subspace of rational functions. We have
$$[L, \ldots, L] = n! \Vol_n(\widetilde{\Delta}(\ol{A_L})) = n!\int_{\Delta(\ol{A_L})} fd\gamma,$$
where $d\gamma$ is the Lebesgue measure (normalized with respect to
$\Lambda(X)$) and $f$ is as in the paragraph preceding Corollary \ref{cor-Hilbert-growth-G-alg-hat-S}.
\end{Cor}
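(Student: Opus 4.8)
The plan is to derive the Corollary directly from the general formulae already established, specializing to the spherical case where the multiplicity body degenerates to the moment body. First I would recall that Theorem~\ref{th-self-int-index-G-subspace-vol} already gives $[L, \ldots, L] = n!\Vol_n(\widetilde{\Delta}(\ol{A_L}))$ with the volume normalized with respect to $\widetilde{\Lambda}(X)$; and since $X$ is spherical (hence in particular $\ol{A_L}$ has the property that $\Phi_{L_k}$ is birational for large $k$, so the lattices are maximal) this is exactly the first equality claimed, with no further work. So the content is entirely in the second equality, identifying $n!\Vol_n(\widetilde{\Delta})$ with $n!\int_{\Delta(\ol{A_L})} f\,d\gamma$.

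For the second equality, the key structural fact is the one noted in Section~\ref{subsec-spherical}: when $X$ is spherical every $(L,\varphi) \in \K$, and more generally every $A \in {\bf A}_G(X)$, is multiplicity-free, so the projection $\widehat{\pi}: \widehat{S}(A) \to S(A)$ has one-point fibers (Proposition~\ref{prop-hat-pi-multiplicity}) and consequently $\widehat{\Delta}(\ol{A_L})$ coincides with the moment body $\Delta(\ol{A_L})$, with $\widehat{\pi}$ becoming an isomorphism of lattices $\widehat{\Lambda}(X) \cong \Lambda(X)$. Under this identification the pull-back $\widehat{\pi}^*f$ is simply $f$, and the normalized Lebesgue measure $d\widehat{\gamma}$ becomes the normalized Lebesgue measure $d\gamma$ on $\Lambda_\r(X)$. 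Hence Corollary~\ref{cor-self-int-index-G-subspace-integ}(1), which reads $[L,\ldots,L] = n!\int_{\widehat{\Delta}} \widehat{\pi}^*f\,d\widehat{\gamma}$, collapses to exactly $n!\int_{\Delta(\ol{A_L})} f\,d\gamma$. This gives the desired chain of equalities.

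The main point requiring care — and what I expect to be the only real obstacle — is checking that the lattice normalizations match up correctly under the collapse. Specifically, one must verify that when $\widehat{\pi}$ restricts to an isomorphism between $\widehat{\Lambda}(X)$ and $\Lambda(X)$, the pushforward of the Lebesgue measure normalized with respect to $\widehat{\Lambda}(X)$ is genuinely the Lebesgue measure normalized with respect to $\Lambda(X)$, so that no spurious lattice index factor appears; and that $\Lambda(X)$ here really is the full weight lattice of the spherical variety (this uses that $X$ spherical forces $\Phi_{L_k}$ birational for large $k$, via the discussion preceding Theorem~\ref{th-self-int-index-G-subspace-vol}, so $\Lambda(\ol{A_L}) = \Lambda(X)$). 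Once these identifications are in place, everything else is formal bookkeeping, and it remains only to note that $f$ is the top homogeneous part of the Weyl dimension polynomial as defined in equation~\eqref{equ-f}, which is precisely the statement of the Corollary.
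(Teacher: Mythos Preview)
Your proposal is correct and follows essentially the same route as the paper: the corollary is obtained by specializing Theorem~\ref{th-self-int-index-G-subspace-vol} and Corollary~\ref{cor-self-int-index-G-subspace-integ}(1) to the spherical case, using the observation (stated just before the corollary) that multiplicity-freeness collapses $\widehat{\Delta}$ onto $\Delta$ and identifies $\widehat{\pi}^*f$ with $f$, $d\widehat{\gamma}$ with $d\gamma$. One small inaccuracy: the birationality of $\Phi_{L_k}$ for large $k$ is not a consequence of $X$ being spherical but of $\Phi_L$ having finite mapping degree (equivalently $\dim Y_L = n$), as in the paragraph preceding Theorem~\ref{th-self-int-index-G-subspace-vol}; when $\dim Y_L < n$ both sides vanish and the statement is trivial.
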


\begin{Cor}[Self-intersection index of divisors for spherical varieties]
\label{cor-self-int-index-divisor-spherical}
Let $X$ be a {normal} projective spherical variety of dimension $n$ and let
$\mathcal{L}$ be a $G$-linearized very ample line bundle on $X$. Let $D$ be a
divisor of $\mathcal{L}$ and let $R(D)$ be the corresponding algebra of sections
regarded as a $G$-algebra (as in Example \ref{ex-ring-sections-G-alg}).
Then the self-intersection index $D^n$ of the divisor $D$ is equal to:
$$n!\Vol_n(\widetilde{\Delta}(R(D))) = n!\int_{\Delta(R(D))} fd\gamma,$$
where $d\gamma$ and $f$ are as above.
\end{Cor}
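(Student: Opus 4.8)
The plan is to deduce this corollary directly from Corollary~\ref{cor-self-int-index-subspace-spherical}, applied to the invariant subspace attached to $D$. First I would set $L = L(D) = \{f \in \c(X) \mid (f) + D \geq 0\}$ and equip it with the cocycle $\varphi_g = (g \cdot \tau)/\tau$, where $\tau \in H^0(X, \mathcal{L})$ is a section with divisor $D$; by Example~\ref{ex-G-line-bundle-cocycle} this gives $(L, \varphi) \in \K$, with the $\varphi$-twisted $G$-action on $L$ matching the $G$-linearized action on $H^0(X, \mathcal{L})$. Since $\mathcal{L}$ is very ample, the Kodaira map $\Phi_L \colon X \hookrightarrow \p(L^*)$ is a closed embedding pulling $\mathcal{O}(1)$ back to $\mathcal{L}$, so $Y_L \cong X$ has dimension $n$ and degree $D^n$ in $\p(L^*)$ while $\Phi_L$ has mapping degree $1$; by the description of the self-intersection index recalled just before Definition~\ref{def-A_L}, this gives $[L, \ldots, L] = D^n$, in accordance with the compatibility of the intersection index of subspaces of rational functions \cite{Askold-Kiumars-MMJ} with the intersection theory of Cartier divisors.

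Next I would apply Corollary~\ref{cor-self-int-index-subspace-spherical} to the pair $(L, \varphi)$: since $X$ is spherical of dimension $n$, that corollary yields
$$[L, \ldots, L] = n!\,\Vol_n(\widetilde{\Delta}(\ol{A_L})) = n!\int_{\Delta(\ol{A_L})} f\, d\gamma,$$
where $d\gamma$ is the Lebesgue measure normalized with respect to $\Lambda(X)$ and $f$ is the top-degree homogeneous part of the Weyl dimension polynomial, given by~(\ref{equ-f}). Sphericity is exactly what makes the multiplicity body collapse onto the moment body and the Duistermaat--Heckman measure onto Lebesgue measure in this formula.

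Finally I would identify $\ol{A_L}$ with $R(D)$ as graded $G$-algebras. Since $\mathcal{L}$ is very ample, Example~\ref{ex-ring-sections-G-alg}(3) (following \cite[Chap.~2]{Hartshorne}) identifies the algebra of sections $R(D) = \bigoplus_{k \geq 0} L(kD)$ with the integral closure $\ol{A_L}$ of the homogeneous coordinate ring $A_L = \bigoplus_{k \geq 0} L^k$, compatibly with the $G$-actions; hence $\widetilde{\Delta}(R(D)) = \widetilde{\Delta}(\ol{A_L})$ and $\Delta(R(D)) = \Delta(\ol{A_L})$, and substituting into the display completes the proof. I do not expect a genuine obstacle here: the corollary is essentially bookkeeping on top of results already established. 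The two points worth a sentence of care are (i) the identification $R(D) = \ol{A_{L(D)}}$, which relies on both the normality of $X$ and the very ampleness of $\mathcal{L}$, and (ii) the fact that the normalizing lattices appearing in Corollary~\ref{cor-self-int-index-subspace-spherical} are the full lattices $\Lambda(X)$, $\widehat{\Lambda}(X)$, $\widetilde{\Lambda}(X)$ — which holds because $\Phi_L$, being a closed embedding, is in particular birational onto its image, so Propositions~\ref{prop-Lambda(A)}(2) and~\ref{prop-S-hat-multi-lambda}(3) force the lattices attached to $\ol{A_L}$ to be maximal.
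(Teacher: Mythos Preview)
Your proposal is correct and is precisely the argument the paper has in mind: the corollary is stated without proof because it follows immediately from Corollary~\ref{cor-self-int-index-subspace-spherical} once one identifies $R(D)$ with $\ol{A_{L(D)}}$ via Example~\ref{ex-ring-sections-G-alg}(3) and uses that the very ample Kodaira embedding gives $[L,\ldots,L]=D^n$. Your two ``points of care'' (the identification $R(D)=\ol{A_L}$ needing normality and very ampleness, and the lattices being maximal because $\Phi_L$ is birational) are exactly the small checks the paper leaves implicit.
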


\begin{Rem} The algebras $R(D)$ and $\ol{A_L}$ are both finitely generated and hence
both their moment bodies are convex polytopes. Also since for spherical varieties,
the multiplicity body coincide with the moment body, it follows that their string convex bodies are also
convex polytopes. This makes the formulae for the growth of Hilbert functions and self-intersection indices
much more concrete.
\end{Rem}

For the case of $X = G$, with the left-right $(G \times G)$-action, Corollary \ref{cor-self-int-index-subspace-spherical}
is due to B. Kazarnovskii (see \cite{Kazarnovskii}). Corollary \ref{cor-self-int-index-divisor-spherical}
is due to M. Brion (see \cite{Brion1}).


\bibliographystyle{amsplain}



\end{document}